\title{Exterior Shifting of Low Genus Surfaces}
\author[1]{Aaron Keehn\thanks{\href{mailto:aaron.keehn@mail.huji.ac.il}{aaron.keehn@mail.huji.ac.il}. }}
\author[1]{Eran Nevo\thanks{\href{mailto:nevo@math.huji.ac.il}{nevo@math.huji.ac.il}. \\
Both authors were partially supported by the Israel Science Foundation grant ISF-2480/20.}}
\affil[1]{Einstein Institute of Mathematics,
 Hebrew University, Jerusalem~91904, Israel}
\newtheorem{theorem}{Theorem}[section]
\newtheorem{lemma}[theorem]{Lemma}
\newtheorem{proposition}[theorem]{Proposition}
\newtheorem{corollary}[theorem]{Corollary}
\newtheorem{conjecture}[theorem]{Conjecture}
\theoremstyle{definition}
\newtheorem{definition}[theorem]{Definition}
\theoremstyle{definition}
\newtheorem{remark}[theorem]{Remarks}
\newcommand\overmat[2]{%
  \makebox[0pt][l]{$\smash{\color{white}\overbrace{\phantom{%
    \begin{matrix}#2\end{matrix}}}^{\text{\color{black}#1}}}$}#2}
\newcommand\partialphantom{\vphantom{\frac{\partial e_{P,M}}{\partial w_{1,1}}}}
\begin{document}
\maketitle

\begin{abstract}
    We characterize the possible exterior shiftings of \(K\), where \(K\) runs over all triangulation of the torus, or the projective plane, or the Klein bottle. Further, we give a deterministic polynomial-time algorithm for computing the exterior shifting of a given triangulation  \(K\) as above.
\end{abstract}
\section{Introduction}

Kalai \cite{KalaiPart1, KalaiPart2} introduced the (exterior) algebraic shifting operator on simplicial complexes, which is a canonical way, after fixing a field, of associating a shifted complex to a given simplicial complex, based on the exterior face ring. This operator has found many applications in Combinatorics, especially in $f$-vector theory, and is interesting on its own, see e.g. Kalai's survey
\cite{KalaiShifting}.  Computing the algebraic shifting 
\(\Delta (K)\) of an input simplicial complex \(K\) is an instance of the general problem of symbolic determinant identity testing. 
Gauss elimination provides an efficient randomized algorithm, but the known deterministic algorithms run in super-exponential time in the input size. 

In this paper we always shift over a field of characteristic zero; for all such fields the shifting operators coincide, as is well known and follows easily from the definition. 
Note that \(\Delta (K)\) is constant for all $n$-vertex triangulations \(K\) of the $2$-sphere; see the discussion in \cite[Sec.2.4]{KalaiShifting}. Indeed,  \(\Delta (K)\) contains exactly the triangles $\{1,3,n\}$ and $\{2,3,4\}$ and all the faces they imply by \(\Delta (K)\) being a shifted simplicial complex.
This raises a question: what can be said for algebraic shifting of triangulations of other surfaces?
    
Recently Bulavka, Nevo and Peled \cite{bulavka2023volume} showed that  \(\left\{1,3,n\right\}\in \Delta(K)\) for every $n$-vertex triangulation of the torus, the projective plane and the Klein bottle. They conjectured that the same is true for any triangulation of a compact surface without boundary.  

First, we generalize their result by providing a polytime algorithm to compute the exterior shifting for any triangulation of each of the above surfaces.

\begin{theorem}\label{alg exists}
Let \(K\) be a triangulation of either the torus, or 
the projective plane \(\mathbb{RP}^{2}\), or the Klein bottle, with \(n\) vertices. Then there exists a deterministic algorithm for calculating the exterior shifting \(\Delta(K)\), that runs in \(O(n^{p})\) time, where \(p \in \mathbb{N}\) depends only on the topology of the surface \(\left|K\right|\).
\end{theorem}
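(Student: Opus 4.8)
The plan is to bypass the general problem of symbolic determinant identity testing --- the source of the super-exponential running time of naive deterministic shifting --- by exploiting how constrained $\Delta(K)$ is for these three surfaces. Since $|K|$ is a closed surface, $K$ is a $2$-complex, so $\Delta(K)$ is a shifted $2$-complex and is recovered from $\Delta_0(K)=\{\{1\},\dots,\{n\}\}$, the shifted graph $\Delta_1(K)=\Delta(K^{(1)})$, and the family of triangles $\Delta_2(K)$. By Euler's formula together with the pseudomanifold identity $3f_2=2f_1$ one has $f_1(K)=3n-3\chi$ and $f_2(K)=2n-2\chi$ with $\chi=\chi(|K|)$, so $\Delta(K)$ has size $O(n)$; the task is to compute its $\Theta(n)$ edges and $\Theta(n)$ triangles in polynomial time. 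Since $K^{(1)}$ already has only $O(n)$ edges, its shift falls to the same methods used below for the triangles, so I concentrate on $\Delta_2(K)$.

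The essential input is the structural classification that the paper sets out to prove: I expect it to show that $\{\Delta(K): K\text{ an }n\text{-vertex triangulation of the given surface}\}$ is an explicitly described short list, indexed by $n$ and a bounded number of easily computed invariants of $K$ (refining the fact from \cite{bulavka2023volume} that $\{1,3,n\}\in\Delta(K)$, and in the spirit of the constancy of $\Delta(K)$ over $2$-sphere triangulations). Granting this, the algorithm is: recognize the surface and compute $\chi$; compute the finitely many distinguishing invariants --- orientability, small homological data, whether $K$ contains one of finitely many fixed configurations, residues modulo a small integer, and the like, each of which is read off $K$ in polynomial time; and return the corresponding shifted $2$-complex from the list. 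If one distinguishing invariant is instead the membership of a single ``critical'' triangle, i.e.\ a genuine symbolic rank of a matrix of $3\times3$ minors, I would evaluate it not symbolically but at one fixed explicit rational matrix $A_0$ --- for instance a Vandermonde matrix $A_0=(x_i^{\,j-1})$ with distinct small rational $x_i$, or a matrix coming from a polynomial-size hitting set --- and run exact Gaussian elimination; every intermediate quantity is a sub-determinant of a compound matrix of $A_0$, hence of bit-length $\mathrm{poly}(n)$ by Hadamard's inequality, so this costs $\mathrm{poly}(n)$. The gain is that, by the classification, $A_0$ need only avoid a family of separating polynomials whose size is bounded in terms of $\chi$ and polynomial in $n$, which a suitable choice of the $x_i$ achieves, e.g.\ via total-positivity and sign arguments on the $k\times k$ minors.

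The crux, and the only place genuine work is required, is the classification itself, together with the certification that such an explicit evaluation may replace a generic matrix: this is exactly where one cannot invoke a black box for symbolic determinant identity testing and must use that $K$ is a pseudomanifold of small $f$-vector --- every edge in exactly two triangles, connected dual graph, exterior face ring whose graded pieces have dimension only $O(n)$ --- to produce enough explicit linear relations in the exterior face ring to trap $\Delta(K)$ inside the short list. Once that is in hand, the algorithmic wrap-up --- compute $\chi$ and the needed invariants, perform at most one Gaussian elimination with $A_0$, and print the answer --- is routine, and the exponent $p$ depends only on the topology of $|K|$ because both the number of possible shifts and the sizes of the configurations and submatrices inspected during the certification grow with $|\chi|$, i.e.\ with the genus.
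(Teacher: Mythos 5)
There is a genuine gap, in two places, and they are exactly the two places where the real work lies. First, your argument is circular at its core: you take as ``essential input'' a classification of the possible shiftings into a short list indexed by ``easily computed invariants,'' but you neither prove such a classification nor say what the invariants are or why they are computable in polynomial time. Identifying those invariants is precisely the content of the paper's proof: it shows, via the machinery of critical regions (subcomplexes in which edge contractions provably preserve the relevant tails of $\Delta$, Proposition~\ref{vert split preserves crit regions}), the commutativity lemma for vertex splits (Lemma~\ref{commutativity} and Proposition~\ref{commutativity for non crit splits}), and finite computer verification on irreducible triangulations and boundedly many splits, that the only uncertainty in $\Delta(K)$ is governed by a purely combinatorial feature --- the presence or absence of reducible critical disks with at most six boundary vertices --- which can be detected by enumerating $O(n^{6})$ short loops and checking the combinatorial characterization of Proposition~\ref{characterization of crit disks}. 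Knowing merely that there are finitely many possible answers (Theorem~\ref{possible shiftings}) does not tell you how to decide \emph{which} one occurs for a given $K$; that decision is the theorem being proved, and in the paper it is reduced to combinatorics plus bounded-size computations, not assumed.

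Second, your fallback for deciding membership of a single ``critical'' face --- evaluate the generic matrix $X$ at a fixed rational point such as a Vandermonde matrix, or at ``a matrix coming from a polynomial-size hitting set,'' and justify nonvanishing ``via total-positivity and sign arguments'' --- does not work as stated. The quantities to be tested are ranks of matrices (such as $\psi^{1}_{K,k}$, $\psi^{2}_{K,k}$, $M_{D,k}$) whose entries are signed minors of the generic $X$; these are signed incidence-type matrices, not totally positive ones, and no argument is given (nor is one known) that their relevant maximal minors are nonzero at a Vandermonde point, nor is any explicit polynomial-size hitting set exhibited for this family. If such a fixed evaluation could be certified, you would have derandomized this instance of symbolic determinant identity testing outright, which is exactly the obstruction the paper's critical-region approach is built to circumvent: the paper never evaluates $X$ at a special point in the algorithm, it eliminates the symbolic rank computations altogether by contracting edges inside critical regions (preserving the tails exactly) down to boundedly many base cases computed once and for all. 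So the proposal's ``routine wrap-up'' rests on two unproved pillars: the classification-with-computable-invariants, and the hitting-set evaluation; neither is routine, and the second is likely false in the generality you invoke it.
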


For the proof, we study the effect of edge contractions and vertex splits on the exterior shifting of a triangulation. The key ingredient is defining what we call \emph{critical regions}, which are subcomplexes of a triangulation in which contracting an edge (or splitting a vertex) affects the shifting in a predictable way.
    This allows us to carefully choose edges to contract, and thus the problem reduces to calculating the shifting of triangulations with few vertices.
    
    To do this, we made use of the lists of all irreducible triangulations for the relevant surfaces (\cite{LavrenchenkoTorus, BarnetteRP2, LawrencenkoNegamiKlein, SulankeNoteKlein}) to calculate the exterior shifting for a finite number of cases, generated from the irreducible ones by a bounded number of vertex splits. 
    
Second, the same ideas are used to obtain a characterization of the possible shiftings for these surfaces: 
let \(\leq _{p}\) denote the partial ordering on equal-sized subsets of \(\mathbb{N}\), where \(\left\{ a_{1}<\cdots<a_{m}\right\} \leq_{p}\left\{ b_{1}<\cdots<b_{m}\right\} \) if and only if \(a_{i}\leq b_{i}\)  for all \(1\leq i\leq m\).

    \begin{theorem}
    \label{possible shiftings}
        (i) If \(K\) is a triangulation of the torus with \(n\) vertices, then the possible sets of maximal faces of \(K\) (w.r.t. \(\leq_{p}\)) are exactly the following:
        \[\mathcal{T}_{1} := \left\{ \left(2,3,4\right),\left(1,3,n\right),\left(1,4,7\right),\left(1,5,6\right),\left(3,n\right),\left(6,7\right)\right\} ,\]
        \[\mathcal{T}_{2} := \left\{ \left(2,3,4\right),\left(1,3,n\right),\left(1,4,8\right),\left(3,n\right),\left(4,8\right),\left(5,7\right)\right\} ,\]
        \[\mathcal{T}_{3} := \left\{ \left(2,3,4\right),\left(1,3,n\right),\left(1,4,8\right),\left(3,n\right),\left(4,9\right),\left(5,6\right)\right\} ,\]
        \[\mathcal{T}_{4} := \left\{ \left(2,3,4\right),\left(1,3,n\right),\left(1,4,8\right),\left(3,n\right),\left(4,10\right)\right\} .\]
        (ii) If \(K\) is a triangulation of \(\mathbb{RP}^{2}\) with \(n\) vertices,  then the possible sets of maximal faces of \(K\) (w.r.t. \(\leq_{p}\)) are exactly the following:
        \[\mathcal{P}_{1} := \left\{ \left(1,3,n\right),\left(1,5,6\right),\left(3,n\right),\left(5,6\right)\right\} ,\]
        \[\mathcal{P}_{2} := \left\{ \left(1,3,n\right),\left(1,4,7\right),\left(3,n\right),\left(4,7\right)\right\} .\]
        (iii) If \(K\) is a triangulation of the Klein bottle with \(n\) vertices,  then the possible sets of maximal faces of \(K\) (w.r.t. \(\leq_{p}\)) are exactly the following:
        \[\mathcal{KB}_{1} := \left\{ \left(1,3,n\right),\left(1,4,8\right),\left(1,5,6\right),\left(3,n\right),\left(4,8\right),\left(5,7\right)\right\} ,\]
        \[\mathcal{KB}_{2} := \left\{ \left(1,3,n\right),\left(1,4,9\right),\left(3,n\right),\left(4,9\right),\left(5,6\right)\right\} ,\]
        \[\mathcal{KB}_{3} := \left\{ \left(1,3,n\right),\left(1,4,9\right),\left(3,n\right),\left(4,10\right)\right\} .\]
    \end{theorem}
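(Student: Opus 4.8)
The plan is to prove both halves of each of (i)--(iii) --- that every listed antichain of $\leq_{p}$-maximal faces is attained, and that nothing outside the list occurs --- by induction on the number of vertices $n$, exploiting the fact that every triangulation of a closed surface $S$ is obtained from one of the finitely many \emph{irreducible} triangulations of $S$ by a sequence of vertex splits; equivalently, a non-irreducible $K$ has a contractible edge $e$, and $K/e$ is again a triangulation of $S$ with one fewer vertex. The explicit classifications of irreducible triangulations of the torus \cite{LavrenchenkoTorus}, of $\mathbb{RP}^{2}$ \cite{BarnetteRP2}, and of the Klein bottle \cite{LawrencenkoNegamiKlein, SulankeNoteKlein} provide the (small, finite) base cases, and the analysis is uniform across the three surfaces, each time with its own irreducible list.

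\emph{Base cases.} For each surface, compute $\Delta$ --- via the algorithm of Theorem~\ref{alg exists}, or directly --- on every irreducible triangulation and, more generally, on all triangulations obtained from an irreducible one by a bounded number of vertex splits, and read off the $\leq_{p}$-maximal triangles and the $\leq_{p}$-maximal edges of the resulting shifted complex ($0$-faces being trivial since $f_{0}=n$). One checks that each outcome coincides with one of $\mathcal{T}_{1},\dots,\mathcal{T}_{4}$ (resp.\ $\mathcal{P}_{1},\mathcal{P}_{2}$; $\mathcal{KB}_{1},\mathcal{KB}_{2},\mathcal{KB}_{3}$), after accounting for the $\leq_{p}$-collisions that occur when $n$ is small (e.g.\ $(1,3,n)\leq_{p}(1,4,7)$ once $n\le 7$, so for the minimal triangulations some listed faces merge); these finitely many small values of $n$ are handled by hand. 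This step also delivers realizability: for each listed antichain one exhibits an explicit family of triangulations realizing it for all admissible $n$ --- these families reduce, through controlled contractions in the sense of Theorem~\ref{alg exists}, to small base triangulations, so their shifting is determined by the base computation.

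\emph{Inductive step (completeness).} Let $K$ be a triangulation of $S$ with $n$ vertices that is not among the base cases, so it has a contractible edge $e$; set $K':=K/e$, a triangulation of $S$ with $n-1$ vertices, whose $\leq_{p}$-maximal face set is, by induction, one of the listed antichains $\mathcal{X}$ with parameter $n-1$. Now apply the critical-regions analysis developed for Theorem~\ref{alg exists}: the passage $K'\rightsquigarrow K$ is a single vertex split, and outside the critical region determined by $e$ it changes $\Delta$ only by relabeling, so the effect on $\Delta$ is governed by bounded local data near the split vertex together with the combinatorial type $\mathcal{X}$. A finite case analysis over (i) the possible types $\mathcal{X}$ and (ii) the possible configurations of the critical region then shows that the $\leq_{p}$-maximal antichain of $\Delta(K)$ is again one of the listed antichains (with parameter $n$). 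In particular $\bigcup_{i}\mathcal{T}_{i}$ (resp.\ $\bigcup_{i}\mathcal{P}_{i}$, $\bigcup_{i}\mathcal{KB}_{i}$) is closed under vertex splits, which closes the induction; combined with the realizability part, this gives the claimed exact lists.

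\textbf{Main obstacle.} The crux is the inductive step. The critical-regions lemma tells us \emph{abstractly} that a vertex split perturbs $\Delta$ in a localized, predictable way, but turning this into the sharp statement ``the $\leq_{p}$-maximal antichain stays within this explicit four/two/three-element list'' requires (a) enumerating the configurations of a critical region relative to each combinatorial type $\mathcal{X}$, and (b) for each configuration, pinning down exactly how the maximal triangles and maximal edges move. The subtlety inside (b) is that a critical region may contain arbitrarily many vertices, so one must argue that its contribution to the shifting depends only on bounded data --- this is precisely where the rigidity/structure theory of shifted triangulations of surfaces (very few $\leq_{p}$-maximal faces) is used to keep the case analysis finite. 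A secondary obstacle is purely bookkeeping: ensuring in the base-case step that the small-$n$ degeneracies of $\leq_{p}$ do not create spurious antichains, and verifying that each listed antichain is genuinely attained by a triangulation of the correct surface for every admissible $n$.
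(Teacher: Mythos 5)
Your overall scaffolding (irreducible triangulations as finite base cases, computer calculation on small triangulations, critical regions to control the passage between $K$ and $K/e$) is in the spirit of the paper, but the core of your inductive step is not supported by the available machinery and is where the real difficulty lies. You contract an \emph{arbitrary} contractible edge $e$ and then claim that the single split $K/e\rightsquigarrow K$ ``changes $\Delta$ only by relabeling outside the critical region determined by $e$,'' so that the effect is governed by bounded local data. No such statement is true, and none is proved in the paper: for an arbitrary split the only available tool is the one-sided monotonicity of Proposition~\ref{vert split preserves tails} (tails can only shrink under a split), which does not determine $\Delta(K)$ from $\Delta(K/e)$; exact preservation of tails holds only for contractions \emph{internal to a critical region} satisfying the rank condition (Proposition~\ref{vert split preserves crit regions}), i.e.\ for carefully chosen edges, not for every contractible edge. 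Moreover, your proposed ``finite case analysis over configurations of the critical region'' cannot be finite as stated, since critical regions have unboundedly many internal vertices; you flag this obstacle yourself but appeal to an unproved ``rigidity/structure theory'' to resolve it.

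The paper's actual route is the reverse of yours and sidesteps exactly this problem: starting from $K$, it contracts \emph{chosen} edges (inside triangular disks, then inside reducible critical regions) so that Propositions~\ref{contraction in triangle preserves 2nd dimension} and~\ref{vert split preserves crit regions} give \emph{equality} of the relevant tails at every step, reducing to a prime, critically irreducible triangulation $T'$. If $T'$ has few vertices ($7$, or $8$--$10$), the shifting is read off from the computer calculations; if $T'$ has at least $11$ vertices, Theorem~\ref{shifting of a large torus} applies, and its proof is where the genuinely global work happens: the commutativity lemma and Proposition~\ref{commutativity for non crit splits} are used to rearrange split sequences so that non-critical splits come first, combined with computer-verified structural properties of all (critically irreducible) torus triangulations on at most $11$ vertices, and then monotonicity of tails finishes the argument. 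A counting argument ($f_1=3n$, $f_2=2n$) converts containment $\mathcal{T}_i\subseteq\Delta(T)$ into the exact antichain statement, and realizability comes from the computed small cases propagated upward by the same tail-preserving contractions. So the gap in your proposal is concrete: the ``bounded local effect of one split'' lemma you rely on does not exist, and replacing it requires either choosing which edge to contract (as the paper does) or reproving something like Theorem~\ref{shifting of a large torus} with its commutativity-plus-computation argument.
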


    These characterizations leads us to conjecture the following for a triangulation of every compact surface without boundary ($<_{\mathrm{lex}}$ stands for the lexicographic order, which is a linear extension of $\leq_{p}$):
    \begin{conjecture}
    \label{conjecture on edges in shifting}
    If \(K\) is a triangulation of a compact surface without boundary, and \(\sigma,\sigma'\in\Delta\left(K\right)_{1}\) are edges with \(\sigma<_{\mathrm{lex}}\sigma'\), then:
    \[\left\{ 1\right\} \cup\sigma\notin\Delta\left(K\right)\Rightarrow\left\{ 1\right\} \cup\sigma'\notin\Delta\left(K\right).\]
    \end{conjecture}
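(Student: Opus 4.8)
The plan is to separate the statement into the part that already follows from Theorem~\ref{possible shiftings} and the genuinely open general case, and to describe how one would hope to reach the latter with the machinery developed here.

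\emph{Reformulation.} By the Björner--Kalai formula, for the shifted complex $\Delta(K)$ and any $i$,
\[
\tilde{\beta}_{i-1}(K;\mathbb{Q})\;=\;\#\bigl\{\,S\in\Delta(K)\ :\ |S|=i,\ 1\notin S,\ S\cup\{1\}\notin\Delta(K)\,\bigr\}.
\]
With $i=2$, the edges $\sigma\in\Delta(K)_1$ satisfying $1\notin\sigma$ and $\{1\}\cup\sigma\notin\Delta(K)$ --- call them the \emph{defect edges} --- number exactly $\tilde{\beta}_1(K;\mathbb{Q})$. Moreover the hypothesis $\{1\}\cup\sigma\notin\Delta(K)$ already forces $1\notin\sigma$ (otherwise $\{1\}\cup\sigma=\sigma\in\Delta(K)$), and then $\sigma<_{\mathrm{lex}}\sigma'$ forces $1\notin\sigma'$ (an edge through $1$ is $<_{\mathrm{lex}}$-smaller than every edge avoiding $1$). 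Hence Conjecture~\ref{conjecture on edges in shifting} is equivalent to the assertion that the defect edges form a $<_{\mathrm{lex}}$-final segment of $\Delta(K)_1\setminus\mathrm{st}_{\Delta(K)}(1)$; equivalently, that the $1$-skeleton of $\mathrm{lk}_{\Delta(K)}(1)$ is a $<_{\mathrm{lex}}$-initial segment of the $1$-skeleton of $\Delta(K)\setminus 1$. In particular the conjecture is vacuous when $\tilde{\beta}_1(K;\mathbb{Q})=0$, e.g.\ for $S^2$ and $\mathbb{RP}^2$.

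\emph{The three surfaces treated here.} For $K$ a triangulation of the torus, $\mathbb{RP}^2$ or the Klein bottle, the conjecture is an immediate consequence of Theorem~\ref{possible shiftings}: for each of the finitely many admissible complexes one reads the edges outside $\mathrm{st}(1)$ and the triangles through $1$ off the list of $\le_p$-maximal faces, and verifies directly that the defect edges form a $<_{\mathrm{lex}}$-final segment. For instance, in the torus family $\mathcal{T}_1$ the edges outside $\mathrm{st}(1)$ are $\{2,j\}$ and $\{3,j\}$ together with $\{4,5\},\{4,6\},\{4,7\},\{5,6\},\{5,7\},\{6,7\}$, while the triangles through $1$ are those $\le_p(1,3,n)$, $\le_p(1,4,7)$ or $\le_p(1,5,6)$; so the defect edges are $\{5,7\}$ and $\{6,7\}$, the two $<_{\mathrm{lex}}$-largest edges outside $\mathrm{st}(1)$, consistent with $\tilde{\beta}_1=2$. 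The families $\mathcal{T}_2,\dots,\mathcal{KB}_3$ are dispatched identically, and $\mathcal{P}_1,\mathcal{P}_2$ are vacuous; thus only surfaces of orientable genus $\ge 2$ and non-orientable genus $\ge 3$ remain.

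\emph{Towards the general case.} For a fixed surface $|K|$ one would imitate the proof of Theorem~\ref{alg exists}: apply the critical-region analysis of edge contractions and vertex splits to reduce $K$ to a triangulation with boundedly many vertices while preserving the validity of the conjecture for it, use the finiteness of the set of irreducible triangulations of $|K|$ (Barnette--Edelson), and finally check the statement on the bounded family of triangulations obtained from the irreducible ones by a bounded number of vertex splits. The principal obstacle is that this is not a \emph{uniform} argument: in higher genus one must anticipate new types of critical regions and a larger split-depth, so the reduction is no longer bounded in terms of topology alone, and even with such a bound the number of irreducible triangulations grows super-exponentially, rendering the concluding finite check impractical. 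A more satisfactory route would be a structural refinement of Björner--Kalai pinning down exactly which edges $\sigma$ outside $\mathrm{st}(1)$ acquire the cone $\{1\}\cup\sigma$ in $\Delta(K)$ --- concretely, a monotonicity, along the $<_{\mathrm{lex}}$-order on such edges, of the generic exterior-algebra ranks that decide this membership for surface triangulations. Proving that monotonicity is, I expect, the real difficulty, and it is what the conjecture leaves open.
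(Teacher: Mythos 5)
The statement you were asked to prove is Conjecture~\ref{conjecture on edges in shifting}: the paper does not prove it, but states it as an open problem motivated by the characterizations in Theorem~\ref{possible shiftings}, so there is no proof of the paper's to compare yours against. Within that constraint your proposal is accurate and honest. Your reformulation via Theorem~\ref{Shifting preserves Betti numbers} (defect edges forming a $<_{\mathrm{lex}}$-final segment among the edges of $\Delta(K)$ avoiding $1$, equivalently the edges of $\mathrm{lk}_{\Delta(K)}(1)$ forming an initial segment of the edges of the deletion) is correct, as is the observation that the hypothesis forces $1\notin\sigma$ and hence $1\notin\sigma'$. Your case-check against the lists $\mathcal{T}_1,\dots,\mathcal{KB}_3$ is right (e.g.\ for $\mathcal{T}_1$ the defect edges are exactly $(5,7)$ and $(6,7)$, matching $b_1=2$; for $\mathbb{RP}^2$ the statement is vacuous in characteristic zero since $b_1=0$), and this is precisely the evidence from which the authors formulate the conjecture in the first place.

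What your proposal does not do — and correctly does not claim to do — is prove the conjecture for surfaces beyond the torus, projective plane and Klein bottle; that is the actual content of the conjecture and it remains open in the paper as well. Your sketched route (critical regions plus the finiteness of irreducible triangulations of a fixed surface) mirrors the strategy of Theorems~\ref{alg exists} and~\ref{possible shiftings}, and your caveats match the paper's concluding remarks: the number of irreducible triangulations already in the next genus runs into the thousands, and the reduction is not uniform in the topology. One prerequisite worth adding to your list of obstacles: even the single membership $\{1,3,n\}\in\Delta(K)$ (volume rigidity, Theorem~\ref{volume rigidity of torus kb and rp2}) is only conjectural for higher genus, and the paper flags proving it as a necessary first step before any analogue of your ``monotonicity of generic exterior-algebra ranks'' could be attacked. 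So: no error, correct handling of the low-genus cases by the same route the paper's own results suggest, but the general statement is not established here, nor in the paper.
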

    Note that the condition \(\left\{ 1\right\} \cup\sigma\notin\Delta\left(K\right)\) means that \(\sigma\) contributes to the 1st homology of \(\Delta\left(K\right)\), see Theorem~\ref{Shifting preserves Betti numbers}.
    Of course, replacing $<_{\mathrm{lex}}$ by $\leq_{p}$ everywhere in the conjecture makes the assertion trivial, as \(\Delta\left(K\right)\) is shifted.

    \textbf{Outline.} In Section \ref{Preliminaries section} we establish some terminology and give the necessary background on exterior shifting and combinatorial topology. In Section \ref{Commutativity of vertex splits section} we discuss the combinatorics of vertex splitting and how vertex splits commute with each other, and in Section \ref{How do vertex splits affect shifting} we consider how vertex splits affect the shifting of a simplicial complex. In Section \ref{critical regions section} we define critical regions and prove their crucial properties. In Section \ref{algorithm section} we prove Theorems~\ref{alg exists} and~\ref{possible shiftings} for the torus. For the sake of readability, we leave the discussion of the projective plane and the Klein bottle to the Appendix.
\section{Preliminaries}
\label{Preliminaries section}
\subsection{Combinatorial Topology}
We will denote the realization as a topological space of a simplicial complex \(K\)  by \(\left|K\right|\).

\begin{definition}
    
    A pure 2-dimensional simplicial complex \(S\) will be called \emph{surface-embeddable} if there is some triangulated surface \(\widetilde{S}\) such that \(S\) is a subcomplex of \(\widetilde{S}\). We denote by \(\partial S\) the subgraph of \(S\) where \(\left|\partial S\right|\subseteq\left|S\right|\) is the boundary of \(\left|S\right|\).
 
    A vertex of \(S\) is a \emph{boundary vertex} if it lies in \(\partial S\), otherwise it is an \emph{internal vertex}.

    An edge of \(S\) is a \emph{boundary edge} if it lies in \(\partial S\), and it is an \emph{internal edge} if it is incident to an internal vertex. All edges which are neither internal nor boundary are \emph{diagonals} (in other words, all edges not in the boundary that intersect the boundary at both vertices).
\end{definition}
\begin{definition}    
    Let \(K\) be a simplicial complex, and \(a,b,c\) vertices in \(K\). If \(K\) contains the edges \(ab,ac,bc\) but not the 2-simplex \(abc\), we say that \(abc\) is a \emph{missing triangle} in \(K\).
\end{definition}

\begin{definition}    
    For a simplicial complex \(K\) and an edge \(vv'\) in \(K\), we have an operation of \emph{edge contraction} where we identify the two vertices \(v\) and \(v'\), and remove any duplicate faces, to obtain a new simplicial complex \(K'\). The inverse operation that starts with \(K'\) and gives \(K\) is called \emph{vertex splitting}.

    Since we will be dealing with triangulations of surfaces, we will only be interested in contracting edges which preserve the topology of the complex, which happens precisely when the contracted edge does not belong to any missing triangle\footnote{The only exception to this is the boundary of a tetrahedron, where there are no missing triangles, and at the same time no contractible edges.}. We shall call such an edge \emph{contractible}. 
    
    A triangulation of a surface in which every edge is part of a missing triangle is called \emph{irreducible}.

    If \(K\) is a triangulation of a surface, and \(v_{1}v_{2}\) a contractible edge in \(K\), let \(K'\) be the triangulation obtained by contracting \(v_{1}v_{2}\) to a vertex \(v\). Then there are two 2-faces \(uv_{1}v_{2},wv_{1}v_{2}\) in \(K\) containing the edge \(v_{1}v_{2}\), and in this case we say that the vertex \(v\) in \(K'\) is \emph{split between} \(u\) and \(w\) to obtain \(K\).

    The \emph{link} of a vertex $v\in K$ is denoted by $\mathrm{lk}_{v}(K)$.
\end{definition}

\subsection{Algebraic Shifting}

\begin{definition}
    We have a partial ordering \(\leq _{p}\) on subsets of \(\mathbb{N}\), where \[\left\{ a_{1}<\cdots<a_{m}\right\} \leq_{p}\left\{ b_{1}<\cdots<b_{m}\right\} \] if and only if: \[\forall1\leq i\leq m\ \ \ a_{i}\leq b_{i}.\]

    A simplicial complex that is closed downward w.r.t. \(\leq_{p}\) is called a \emph{shifted} complex.
\end{definition}
We recall Kalai's definition of exterior algebraic shifting:
Let \(\left(e_{i}\right)_{i\in\left[n\right]}\) be the standard basis for \(\mathbb{R}^{n}\), and let \(X=\left(x_{ij}\right)_{1\leq i,j\leq n}\) be a generic matrix over \(\mathbb{R}\), i.e. all the entries of \(X\) are algebraically independent over \(\mathbb{Q}\). We define a generic basis \(f_{i}=\sum_{j=1}^{n}x_{ij}e_{j}\) of \(\mathbb{R}^{n}\), where $1\leq i\leq n$.

Given a simplicial complex \(K\) with \(n\) vertices, we have the \emph{exterior face ring}: \[ \bigwedge K=\bigwedge\mathbb{R}^{n}/\left(e_{\sigma}|\ \sigma\notin K\right),\] where \(e_{\left\{ i_{1}<\cdots<i_{k}\right\} }:=e_{i_{1}}\wedge\cdots\wedge e_{i_{k}}\). Denote by \(q:\bigwedge\mathbb{R}^{n}\rightarrow\bigwedge K\) the quotient map. Given a partial order \(<\) on the power set of \(\left[n\right]\), define:
\[\Delta^{<}\left(K\right)=\left\{ \sigma\subseteq\left[n\right]|\ q\left(f_{\sigma}\right)\notin\text{span}_{\mathbb{R}}\left\{ q\left(f_{\tau}\right)|\ \tau<\sigma,\ \left|\tau\right|=\left|\sigma\right|\right\} \right\} .\]
In \cite{BjornerKalai}, Bj\"{o}rner and Kalai showed that \(\Delta^{<}\left(K\right)\) is a shifted simplicial complex with the same f-vector as \(K\), for every total order \(<\) that refines \(\leq_{p}\). Moreover, \(\Delta^{<}\left(K\right)\) does not depend on the labeling of the vertices.

We shall denote by \(\Delta\) the operation \(\Delta^{\mathrm{lex}}\), where \(\mathrm{lex}\) is the lexicographic order (which indeed refines \(\leq_{p}\)).

For \(n\geq k \geq d \geq 1\), define a linear transformation: 
\[\psi_{n,k}^{d}:\bigoplus_{i=d}^{k}\bigwedge^{1}\mathbb{R}^{n}\rightarrow\bigwedge^{d+1}\mathbb{R}^{n}\]
\[\psi_{n,k}^{d}\left(m_{d},\ldots,m_{k}\right)=\sum_{i=d}^{k}f_{\left[d-1\right]\cup\left\{ i\right\} }\wedge m_{i},\]
and for a simplicial complex \(K\) with \(n\) vertices, define a composition of maps:
\[\psi_{K,k}^{d}:\bigoplus_{i=d}^{k}\bigwedge^{1}\mathbb{R}^{n}\stackrel{\psi_{n,k}^{d}}{\rightarrow}\bigwedge^{d+1}\mathbb{R}^{n}\rightarrow\bigwedge^{d+1} K.\]

\begin{proposition} (\cite[Lemma 2.4]{bulavka2023volume})
\label{image of psi}
    \[\mathrm{Im} \psi_{n,k}^{d} = \mathrm{span} \left\{ f_{\sigma}|\ \sigma\leq_{p}\left\{ 1,2,\ldots,d-1,k,n\right\} \right\} .\]
\end{proposition}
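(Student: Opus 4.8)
The plan is to compute the image of $\psi_{n,k}^d$ directly from the spanning set $\{f_{[d-1]\cup\{i\}}\wedge m_i : d\le i\le k,\ m_i\in\bigwedge^1\mathbb{R}^n\}$, and then identify the resulting span with $\mathrm{span}\{f_\sigma : \sigma\le_p\{1,\dots,d-1,k,n\}\}$. Since each $m_i$ ranges over all of $\bigwedge^1\mathbb{R}^n=\mathbb{R}^n$, and the $f_j$ form a basis, I may as well take $m_i$ to run over the basis vectors $f_j$, $j\in[n]$. Thus
\[
\mathrm{Im}\,\psi_{n,k}^d=\mathrm{span}\bigl\{ f_{[d-1]}\wedge f_i\wedge f_j \ :\ d\le i\le k,\ j\in[n]\bigr\}.
\]
Now I would observe that $f_{[d-1]}\wedge f_i\wedge f_j$ is (up to sign) $f_\tau$ where $\tau=\{1,\dots,d-1,i,j\}$ when $i,j\notin[d-1]$ and $i\ne j$, and is zero otherwise. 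So the left-hand side equals $\mathrm{span}\{f_\tau : \tau\supseteq[d-1],\ |\tau|=d+1,\ \min(\tau\setminus[d-1])\le k\}$. It remains to check that this index set is exactly $\{\sigma : \sigma\le_p\{1,\dots,d-1,k,n\}\}$.

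For that last combinatorial identification, write $\sigma=\{a_1<\cdots<a_{d+1}\}$. By definition of $\le_p$, $\sigma\le_p\{1,\dots,d-1,k,n\}$ means $a_i\le i$ for $i\le d-1$, $a_d\le k$, and $a_{d+1}\le n$. The first block of inequalities $a_i\le i$ for $1\le i\le d-1$ forces $a_i=i$, i.e.\ $[d-1]\subseteq\sigma$; the last inequality $a_{d+1}\le n$ is automatic; and $a_d\le k$ says precisely that the smallest element of $\sigma$ outside $[d-1]$ is at most $k$. This matches the index set from the previous paragraph, completing the proof.

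The argument is essentially a bookkeeping exercise, so there is no serious obstacle; the one point deserving a line of care is the reduction from arbitrary $m_i\in\bigwedge^1\mathbb{R}^n$ to basis elements $f_j$ — this uses only bilinearity of $\wedge$ and the fact that $(f_j)_{j\in[n]}$ is a basis of $\mathbb{R}^n$ (the matrix $X$ is invertible since its entries are algebraically independent, hence $\det X\ne0$) — and the equally routine observation that terms with $j\in\{1,\dots,d-1,i\}$ vanish by antisymmetry and so do not enlarge the span. One should also note the non-degeneracy hypothesis $k\ge d$ ensures the index $i=d$ (giving $f_{[d]}\wedge f_j$, and in particular, taking $j=n$, the top generator $f_{\{1,\dots,d-1,k,n\}}$ itself when $k\ge d$) is included, so the claimed spanning set is nonempty and the maximal element under $\le_p$ is genuinely attained.
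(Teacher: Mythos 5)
Your proof is correct: expanding each $m_i$ in the basis $(f_j)_j$, noting that wedges with repeated indices vanish, and matching the resulting index set $\{\tau\supseteq[d-1],\ |\tau|=d+1,\ \min(\tau\setminus[d-1])\le k\}$ with $\{\sigma\le_p\{1,\ldots,d-1,k,n\}\}$ is exactly the standard argument; the paper itself gives no proof here but cites \cite[Lemma 2.4]{bulavka2023volume}, whose proof proceeds in the same direct way. The only blemish is the final parenthetical: the top generator $f_{\{1,\ldots,d-1,k,n\}}$ arises from $i=k$, $j=n$, not from $i=d$ — a harmless slip in a side remark.
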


\begin{corollary}
    We have:
    \[\mathrm{corank} \psi_{K,k}^{1} = \left|\mathrm{Tail}_{\mathrm{lex}}\left(\Delta (K),\left\{ k+1,k+2\right\} \right)\right|,\]
    and:
    \[\mathrm{corank} \psi_{K,k}^{2} = \left|\mathrm{Tail}_{\mathrm{lex}}\left(\Delta (K),\left\{ 1,k+1,k+2\right\} \right)\right|,\]
    where \(\mathrm{Tail}_{\mathrm{lex}}\left(S,\sigma\right):=\left\{ \delta\in S|\ \left|\delta\right|=\left|\sigma\right|,\ \delta\geq_{\mathrm{lex}}\sigma\right\}\) for a (shifted) simplicial complex \(S\).
\end{corollary}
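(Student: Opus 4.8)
The plan is to unwind the definitions on both sides and reduce everything to the rank computation supplied by Proposition~\ref{image of psi}.

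\textbf{Setup.} Recall that $\psi_{K,k}^{d}$ factors as $\bigoplus_{i=d}^{k}\bigwedge^{1}\mathbb{R}^{n}\xrightarrow{\psi_{n,k}^{d}}\bigwedge^{d+1}\mathbb{R}^{n}\xrightarrow{q}\bigwedge^{d+1}K$, so that
\[
\mathrm{Im}\,\psi_{K,k}^{d}=q\bigl(\mathrm{Im}\,\psi_{n,k}^{d}\bigr)=q\bigl(\mathrm{span}\{f_{\sigma}\mid\sigma\leq_{p}\{1,\dots,d-1,k,n\}\}\bigr),
\]
using Proposition~\ref{image of psi} in the last step. On the other hand, by the Bj\"orner--Kalai theorem, $\dim\bigwedge^{d+1}K=f_{d}(K)=f_{d}(\Delta(K))=|\Delta(K)_{d}|$, and more precisely, since $\Delta(K)=\Delta^{\mathrm{lex}}(K)$ is the image of $\{f_{\sigma}:\sigma\in\Delta(K)_d\}$ under $q$ and these are linearly independent in $\bigwedge^{d+1}K$, we get a basis of $\bigwedge^{d+1}K$ indexed by $\Delta(K)_{d}$.

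\textbf{Key step.} The heart of the matter is to show that, inside $\bigwedge^{d+1}K$, the subspace $q(\mathrm{span}\{f_{\sigma}:\sigma\leq_{p}\rho\})$, where $\rho=\{1,\dots,d-1,k,n\}$, is spanned by $\{q(f_{\sigma}):\sigma\in\Delta(K)_{d},\ \sigma\leq_{\mathrm{lex}}\rho\}$, and hence has codimension exactly $|\{\tau\in\Delta(K)_{d}:\tau>_{\mathrm{lex}}\rho\}|=|\mathrm{Tail}_{\mathrm{lex}}(\Delta(K),\rho')|$ where $\rho'$ is the lex-successor of $\rho$ among $d$-subsets. I would prove the spanning claim by the standard straightening argument behind the definition of $\Delta^{\mathrm{lex}}$: process the $d$-faces $\sigma$ in increasing lex order; whenever $\sigma\notin\Delta(K)$, by definition $q(f_{\sigma})\in\mathrm{span}\{q(f_{\tau}):\tau<_{\mathrm{lex}}\sigma\}$, so $q(f_{\sigma})$ can be eliminated in favour of lex-smaller faces; and since $\leq_{p}$ refines into $\leq_{\mathrm{lex}}$ (any $\tau\leq_{p}\sigma$ satisfies $\tau\leq_{\mathrm{lex}}\sigma$), the set $\{\sigma:\sigma\leq_{p}\rho\}$ is lex-downward "enough" that this elimination stays within $q(\mathrm{span}\{f_{\sigma}:\sigma\leq_{\mathrm{lex}}\rho\})$, whose surviving basis elements are exactly the $q(f_{\sigma})$ with $\sigma\in\Delta(K)_{d}$, $\sigma\leq_{\mathrm{lex}}\rho$. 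Combining, $\mathrm{corank}\,\psi_{K,k}^{d}=|\Delta(K)_{d}|-|\{\sigma\in\Delta(K)_{d}:\sigma\leq_{\mathrm{lex}}\rho\}|=|\mathrm{Tail}_{\mathrm{lex}}(\Delta(K),\rho')|$.

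\textbf{Finishing.} It remains only to identify $\rho'$. For $d=1$, $\rho=\{k,n\}$ and its lex-successor among pairs is $\{k+1,k+2\}$, giving the first formula. For $d=2$, $\rho=\{1,k,n\}$ and its lex-successor among triples is $\{1,k+1,k+2\}$, giving the second. (In the degenerate cases $k=n$ or $k=n-1$ one checks directly that $\mathrm{Tail}$ is empty and both sides vanish.) I expect the main obstacle to be making the straightening/elimination argument fully rigorous—specifically, verifying that the elimination of non-shifted faces never forces us outside the lex-initial segment determined by $\rho$; this rests on the compatibility between $\leq_{p}$ and $\leq_{\mathrm{lex}}$ and on the fact that $\Delta(K)$ is itself $\leq_{p}$-shifted, but it should be carefully checked rather than asserted.
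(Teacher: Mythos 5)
Your skeleton is the same as the paper's proof (factor $\psi_{K,k}^{d}$ through the quotient map $q$, invoke Proposition~\ref{image of psi}, and count dimensions against the lex-greedy definition of $\Delta(K)$), but your key step has a genuine gap, and you locate the difficulty in the wrong place. The part you justify --- that eliminating non-shifted faces keeps you inside the lex-initial segment --- uses only the fact that $\leq_{p}$ refines $\leq_{\mathrm{lex}}$, and it delivers just one containment, $q\bigl(\mathrm{span}\{f_{\sigma}:\sigma\leq_{p}\rho\}\bigr)\subseteq\mathrm{span}\{q(f_{\tau}):\tau\in\Delta(K)_{d},\ \tau\leq_{\mathrm{lex}}\rho\}$, hence only the inequality $\mathrm{corank}\,\psi_{K,k}^{d}\geq\left|\mathrm{Tail}_{\mathrm{lex}}\left(\Delta(K),\rho'\right)\right|$ (with $\rho'$ your lex-successor). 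For equality you need the reverse containment, i.e.\ that every $\tau\in\Delta(K)_{d}$ with $\tau\leq_{\mathrm{lex}}\rho$ satisfies $\tau\leq_{p}\rho$, so that $q(f_{\tau})$ actually lies in $\mathrm{Im}\,\psi_{K,k}^{d}$. This converse fails for a general $\rho$ (the $\leq_{p}$-down-set of an arbitrary face is not a lex-initial segment); it is exactly here that the special shape $\rho=\{1,\ldots,d-1,k,n\}$ enters: because $\rho$ ends in the largest vertex $n$ and starts with $1,\ldots,d-1$, one checks directly that $\sigma\leq_{\mathrm{lex}}\rho\iff\sigma\leq_{p}\rho$, equivalently $\sigma\not\leq_{p}\{k,n\}\iff\sigma\geq_{\mathrm{lex}}\{k+1,k+2\}$ and $\sigma\not\leq_{p}\{1,k,n\}\iff\sigma\geq_{\mathrm{lex}}\{1,k+1,k+2\}$. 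These two equivalences are precisely what the paper's proof writes out, and they are the whole content beyond Proposition~\ref{image of psi}; your sketch asserts the resulting equality of spans (``whose surviving basis elements are exactly\ldots'') without this verification, while the compatibility you flag as the main obstacle is the easy direction.

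A smaller slip: your parenthetical on degenerate cases is wrong for $d=2$, $k=n-1$. A triple $\delta\subseteq[n]$ satisfies $\delta\geq_{\mathrm{lex}}\{1,n,n+1\}$ exactly when $1\notin\delta$, so $\mathrm{Tail}_{\mathrm{lex}}\left(\Delta(K),\{1,n,n+1\}\right)$ consists of the triples of $\Delta(K)$ avoiding $1$ and is generally nonempty; the identity still holds, since $\mathrm{Im}\,\psi_{n,n-1}^{2}$ is spanned by the $f_{\sigma}$ with $1\in\sigma$. Once you insert the displayed equivalences above, the rest of your argument is correct and coincides with the paper's.
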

\begin{proof}
    By the definitions of \(\Delta(K)\) and \(\psi_{K,k}^{d}\) 
    , the corank of \(\psi_{K,k}^{d}\) is the number of \(d\)-faces \(\sigma\) in \(\Delta(K)\) such that \(f_{\sigma}\notin \mathrm{Im} \psi_{n,k}^{d}\). Denote 
    \[\mu_{d,k,n}:=\left\{ 1,2,\ldots,d-1,k,n\right\}. \]
    Then by \ref{image of psi}, for \(\sigma\in{\left[n\right] \choose d+1}\cap \Delta(K)\) we have:
    \[f_{\sigma}\notin \mathrm{Im} \psi_{n,k}^{d}\iff\sigma\not\leq_{p}\mu_{d,k,n}.\]
    Moreover, for \(d=1\) we have:
    \[\sigma\not\leq_{p}\mu_{1,k,n}=\left\{ k,n\right\} \iff\sigma\geq_{\mathrm{lex}}\left\{ k+1,k+2\right\}, \]
    and similarly for \(d=2\):
    \[\sigma\not\leq_{p}\mu_{2,k,n}=\left\{ 1,k,n\right\} \iff\sigma\geq_{\mathrm{lex}}\left\{ 1,k+1,k+2\right\}. \]
\end{proof}
We can represent \(\psi_{K,k}^{d}\) by a \(f_{d}(K)\times (k-d+1)n\) matrix, where $f_i(K)$ denotes the number of $i$-dimensional faces in $K$. First, denote by \(e_{i,v}\) the element of \(\bigoplus_{i=d}^{k}\bigwedge^{1}\mathbb{R}^{n}\) with \(e_{v}\) in the \(i\)-th coordinate, and \(0\) in all the other coordinates. By defining an inner product on \(\bigwedge^{d+1}\mathbb{R}^{n}\) with orthonormal basis \(\left\{ e_{\sigma}|\ \sigma\in{\left[n\right] \choose d+1}\right\} \), we get \(\left\langle e_{\sigma},\psi_{n,k}^{d}\left(e_{i,v}\right)\right\rangle =0\) if \(v\notin \sigma\). 

Otherwise, write \(\sigma=\left\{ v_{0}<v_{1}<\ldots<v_{d}\right\}  \), with \(v=v_{j}\) for some \(0\leq j\leq d\). Then \(\left\langle e_{\sigma},\psi_{n,k}^{d}\left(e_{i,v}\right)\right\rangle = \left(-1\right)^{j+1} \det (M)\), where \(M\) is the submatrix of \(X\) whose rows are given by \(\sigma\backslash\left\{ v\right\} \), and whose columns are given by \(\left[d-1\right]\cup\left\{ i\right\}\).

The following known results will be of use to us later.

    \begin{theorem}[Shifting preserves Betti numbers; Bj\"{o}rner-Kalai~\cite{BjornerKalai, KalaiShifting}]
    \label{Shifting preserves Betti numbers}
        For every simplicial complex \(K\), and every \(d\geq 0\):
        \[b_{d}\left(K\right)=b_{d}\left(\Delta\left(K\right)\right)=\left|\left\{ \sigma\in\Delta\left(K\right)_{d}|\ 1\notin\sigma\ and\ \left\{ 1\right\} \cup\sigma\notin\Delta\left(K\right)_{d}\right\} \right|,\]
        where \(b_{d}\) is the \(d\)-th Betti number.
    \end{theorem}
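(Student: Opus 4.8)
The plan is to compute both $b_{d}(K)$ and $b_{d}(\Delta(K))$ from a single operator — left multiplication by the generic linear form $f_{1}$ — and then to read off the answer from the definition of $\Delta^{\mathrm{lex}}$. We use the reduced convention for $b_{d}$, which is harmless since it agrees with the ordinary $d$-th Betti number for $d\geq1$, the only dimension entering the surface applications. Fix the generic matrix $X$ once and for all; for a simplicial complex $L$ on vertex set $[n]$ write $q_{L}\colon\bigwedge\mathbb{R}^{n}\to\bigwedge L$ for the quotient map and $m_{L}:=q_{L}(f_{1})\wedge(-)$ for the induced left multiplication on $\bigwedge L$. Since $f_{1}$ is a $1$-form, $m_{L}^{2}=0$, so $(\bigwedge L,m_{L})$ is a cochain complex, and the first point is that it computes the homology of $L$: the entries $x_{1j}$ are algebraically independent, hence nonzero, so the diagonal map $\phi\colon e_{j}\mapsto x_{1j}e_{j}$ is an algebra automorphism of $\bigwedge\mathbb{R}^{n}$ which rescales each monomial $e_{\sigma}$, therefore preserves the ideal $(e_{\sigma}\mid\sigma\notin L)$ and descends to an automorphism of $\bigwedge L$; one checks $\phi\circ m_{\mathbf{1}}=m_{L}\circ\phi$ for $\mathbf{1}:=e_{1}+\cdots+e_{n}$, and $(\bigwedge L,m_{\mathbf{1}})$ is the augmented simplicial cochain complex of $L$ in the monomial basis (with $\bigwedge^{k}L$ corresponding to cochains of dimension $k-1$). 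Hence $\dim H^{d+1}(\bigwedge L,m_{L})=b_{d}(L)$, and writing $r_{d}(L):=\mathrm{rank}(m_{L}\colon\bigwedge^{d}L\to\bigwedge^{d+1}L)$ and using $\dim\bigwedge^{d+1}L=f_{d}(L)$ this gives $b_{d}(L)=f_{d}(L)-r_{d+1}(L)-r_{d}(L)$.

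The key step is the identity $r_{d}(L)=|\{\sigma\in\Delta(L)_{d}\mid 1\in\sigma\}|$. Because $q_{L}$ is a surjective algebra map, the image of $m_{L}\colon\bigwedge^{d}L\to\bigwedge^{d+1}L$ equals $q_{L}(f_{1}\wedge\bigwedge^{d}\mathbb{R}^{n})=q_{L}(V_{d})$, where $V_{d}:=\mathrm{span}\{f_{1}\wedge f_{\tau}\mid|\tau|=d\}=\mathrm{span}\{f_{\sigma}\mid|\sigma|=d+1,\ 1\in\sigma\}$, using that the $f_{\tau}$ form a basis and $f_{1}\wedge f_{1}=0$. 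Enumerate the $(d+1)$-subsets of $[n]$ in lex order as $\sigma^{(1)}<_{\mathrm{lex}}\sigma^{(2)}<_{\mathrm{lex}}\cdots$; since every $(d+1)$-subset containing $1$ is lex-smaller than every one not containing $1$, those containing $1$ are precisely the first $M:=\binom{n-1}{d}$ of them. Therefore $q_{L}(V_{d})=\mathrm{span}\{q_{L}(f_{\sigma^{(1)}}),\dots,q_{L}(f_{\sigma^{(M)}})\}$, and scanning this list from the left, its span gains a dimension exactly at those indices $j$ with $q_{L}(f_{\sigma^{(j)}})\notin\mathrm{span}\{q_{L}(f_{\sigma^{(i)}})\mid i<j\}$ — which by the definition of $\Delta^{\mathrm{lex}}(L)$ are exactly the $j$ with $\sigma^{(j)}\in\Delta(L)$. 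So $\dim q_{L}(V_{d})$ counts the $(d+1)$-subsets of $[n]$ that contain $1$ and lie in $\Delta(L)$, which is the asserted identity.

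To conclude, combine $b_{d}(L)=f_{d}(L)-r_{d+1}(L)-r_{d}(L)$ with $r_{d}(L)=|\{\sigma\in\Delta(L)_{d}\mid 1\in\sigma\}|$, applied both to $L=K$ and to $L=\Delta(K)$. Invoking the standard facts that $\Delta$ preserves the $f$-vector and is idempotent (so $\Delta(\Delta(K))=\Delta(K)$; see \cite{KalaiShifting}), the two applications produce the same number, namely
\[
b_{d}(K)=b_{d}(\Delta(K))=f_{d}(\Delta(K))-|\{\sigma\in\Delta(K)_{d+1}\mid 1\in\sigma\}|-|\{\sigma\in\Delta(K)_{d}\mid 1\in\sigma\}|.
\]
A routine rewrite now gives the stated form: split $f_{d}(\Delta(K))$ according to whether a $d$-face contains $1$, and observe that $\sigma\mapsto\sigma\setminus\{1\}$ is a bijection from $\{\sigma\in\Delta(K)_{d+1}\mid 1\in\sigma\}$ onto $\{\tau\in\Delta(K)_{d}\mid 1\notin\tau,\ \{1\}\cup\tau\in\Delta(K)\}$, so the right-hand side collapses to $|\{\sigma\in\Delta(K)_{d}\mid 1\notin\sigma,\ \{1\}\cup\sigma\notin\Delta(K)\}|$. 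I expect the genuinely delicate point to be the key identity of the second paragraph: recognizing $\mathrm{Im}(m_{L})$ as $q_{L}$ of the span of a lex-initial block of generic wedges, and then reading off its dimension from the definition of $\Delta^{\mathrm{lex}}$ — this is precisely where the lex order with vertex $1$ smallest is used, and it is what makes the count come out in terms of $\Delta(K)$ rather than of some other algebraic shift of $K$. Alternatively, one can bypass idempotence by first proving the formula $b_{d}(\Sigma)=|\{\sigma\in\Sigma_{d}\mid 1\notin\sigma,\ \{1\}\cup\sigma\notin\Sigma\}|$ for every shifted $\Sigma$ directly from its near-cone structure, and then applying it with $\Sigma=\Delta(K)$.
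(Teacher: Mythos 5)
Your argument is correct. Note that the paper does not prove this statement at all — it quotes it as a known theorem of Bj\"orner--Kalai — and what you have written is essentially the standard Bj\"orner--Kalai proof: multiplication by the generic form $f_{1}$ on $\bigwedge L$ computes reduced homology (via the rescaling automorphism $e_j\mapsto x_{1j}e_j$ conjugating it to the augmented coboundary $m_{\mathbf 1}$), and the rank identity $r_d(L)=\left|\{\sigma\in\Delta(L)_d\mid 1\in\sigma\}\right|$ follows because the $(d+1)$-sets containing $1$ form a lex-initial segment, which is exactly where the choice of lex order enters. The only caveats are ones you already flagged: the formula requires \emph{reduced} Betti numbers at $d=0$ (the paper's statement is slightly imprecise there), and you lean on $f$-vector preservation and idempotence of $\Delta$, both standard and citable; your alternative of proving the combinatorial count directly for shifted complexes from their near-cone structure, rather than invoking idempotence, is precisely the classical route.
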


    \begin{theorem}[Shifting a union over a simplex; Nevo~\cite{nevo2006algebraic}]
    \label{Shifting a union over a simplex}
        Let \(K\) and \(L\) be simplicial complexes such that \(K\cap L=\left\langle \sigma\right\rangle \) is a simplex (and all its subsets). 
        Let \(\left(K\cup L\right)_{0}=\left[n\right]\), and let \(\left[n\right]\supseteq T=\left\{ t_{1}<\cdots<t_{j}<t_{j+1}\right\} \). Then:
        \[T\in\Delta\left(K\cup L\right)\iff t_{j+1}-t_{j}\leq D_{K}\left(T\right)+D_{L}\left(T\right)-D_{\left\langle \sigma\right\rangle }\left(T\right),\]
        where for a simplicial complex \(S\): 
        \[D_{S}\left(T\right):=\left|\left\{ s\in\left[n\right]|\ s>t_{j}\ and\ \left\{ s\right\} \cup\left(T\backslash\left\{ t_{j+1}\right\} \right)\in\Delta\left(S\right)\right\} \right|.\]
    \end{theorem}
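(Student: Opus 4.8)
I would reduce the biconditional to a single numerical identity between the functions \(D_{\bullet}\), reinterpret that identity as the invariance of an alternating sum of dimensions, and then prove the invariance using the Mayer--Vietoris short exact sequence of exterior face rings together with the genericity of \(X\).

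\textbf{Step 1: reduction to a degree identity.} Set \(\tau:=T\setminus\{t_{j+1}\}\), so \(\max\tau=t_{j}\) and \(D_{S}(T)\) depends only on \(S\) and on \(\tau\). If \(\Delta\) is shifted and \(\tau\in\Delta\), then \(\{s>t_{j}:\tau\cup\{s\}\in\Delta\}\) is an initial segment of \((t_{j},n]\) (because \(t_{j}<s'<s\) forces \(\tau\cup\{s'\}\le_{p}\tau\cup\{s\}\)); hence, taking \(\Delta=\Delta(K\cup L)\), which is shifted by Bj\"{o}rner--Kalai~\cite{BjornerKalai}, one gets \(T\in\Delta(K\cup L)\iff t_{j+1}-t_{j}\le D_{K\cup L}(T)\), and scanning all admissible \(t_{j+1}\) shows the theorem is equivalent to the identity
\[D_{K\cup L}(T)=D_{K}(T)+D_{L}(T)-D_{\langle\sigma\rangle}(T)\qquad\text{for every }T.\]
Now for a complex \(M\) on \([n]\) let \(W\) and \(W'\) be the lex-initial subspaces of \(\bigwedge^{j+1}\mathbb{R}^{n}\) spanned respectively by \(\{f_{\rho}:\rho<_{\mathrm{lex}}\tau\cup\{t_{j}+1\}\}\) and \(\{f_{\rho}:\rho\le_{\mathrm{lex}}\tau\cup\{n\}\}\), so that \(W'=W+f_{\tau}\wedge\bigwedge^{1}\mathbb{R}^{n}\). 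By the greedy (lex) construction of \(\Delta(M)\), the dimensions \(\dim q_{M}(W)\) and \(\dim q_{M}(W')\) count the faces of \(\Delta(M)\) of cardinality \(j+1\) below the respective lex thresholds, so their difference is exactly \(D_{M}(T)\). The displayed identity therefore becomes
\[\Phi(W')=\Phi(W),\qquad\text{where}\quad\Phi(U):=\dim q_{K\cup L}(U)+\dim q_{\langle\sigma\rangle}(U)-\dim q_{K}(U)-\dim q_{L}(U).\]

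\textbf{Step 2: the Mayer--Vietoris input.} Since \(K\cap L=\langle\sigma\rangle\), in each degree \(d\) one has a short exact sequence
\[0\longrightarrow\textstyle\bigwedge^{d}(K\cup L)\xrightarrow{\ \alpha\ }\bigwedge^{d}K\oplus\bigwedge^{d}L\xrightarrow{\ \beta\ }\bigwedge^{d}\langle\sigma\rangle\longrightarrow0,\]
with \(\alpha\) induced by the two quotient maps (so \(\alpha\,q_{K\cup L}(f_{\rho})=(q_{K}(f_{\rho}),q_{L}(f_{\rho}))\)) and \(\beta\) the difference of the two surjections onto \(\bigwedge^{d}\langle\sigma\rangle\). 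For any \(U\subseteq\bigwedge^{d}\mathbb{R}^{n}\), applying \(\beta\) to \(q_{K}(U)\oplus q_{L}(U)\) gives \(q_{\langle\sigma\rangle}(U)\), while \(\alpha(q_{K\cup L}(U))\subseteq\ker\beta\cap(q_{K}(U)\oplus q_{L}(U))\); rank--nullity for \(\beta\) restricted to \(q_{K}(U)\oplus q_{L}(U)\) then yields
\[-\Phi(U)=\dim\bigl(\ker\beta\cap(q_{K}(U)\oplus q_{L}(U))\bigr)-\dim q_{K\cup L}(U)\ \ge\ 0,\]
so \(\Phi(U)\le0\), with equality exactly when the exact sequence restricts exactly to the subspaces \(q_{\bullet}(U)\). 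Everything thus comes down to showing that this ``defect'' does not change when \(U\) is enlarged from \(W\) to \(W'\).

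\textbf{Step 3: closing the defect (the hard part).} I would unwind the defect combinatorially: writing \(I_{M}=\ker q_{M}\subseteq\bigwedge^{d}\mathbb{R}^{n}\) for the (coordinate) Stanley--Reisner subspace, one has \(I_{K\cup L}=I_{K}\cap I_{L}\) and \(I_{\langle\sigma\rangle}=I_{K}+I_{L}\), so by the modular law
\[-\Phi(U)=\dim\bigl(U\cap(I_{K}+I_{L})\bigr)-\dim\bigl((U\cap I_{K})+(U\cap I_{L})\bigr).\]
The content of the theorem is then that adjoining the vectors \(f_{\tau}\wedge f_{s}\) for \(t_{j}<s\le n\) to \(W\) raises \(\dim(U\cap I_{K})\), \(\dim(U\cap I_{L})\), \(\dim(U\cap I_{K}\cap I_{L})\) and \(\dim(U\cap(I_{K}+I_{L}))\) by amounts whose alternating combination is unchanged. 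This is \emph{not} true for an arbitrary enlargement of a subspace --- the alternating sum genuinely can jump --- so the argument must exploit both the special lex-initial shape of \(W\) and \(W'\) and the fact that \(X\) is generic, via the standard lemma of exterior-shifting theory that \(\bigwedge^{d}X\) places lex-initial coordinate subspaces in sufficiently general position relative to each \(I_{M}\). It is here that the hypothesis ``\(K\cap L\) is a simplex'' must be used (it makes \(\bigwedge\langle\sigma\rangle\) the exterior algebra of the coordinate subspace on \(\sigma\), with \(\Delta\langle\sigma\rangle=\langle\{1,\dots,|\sigma|\}\rangle\)); without it the formula can fail, e.g.\ already for \(K\) a \(4\)-cycle and \(L\) a chord joining two of its vertices. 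I expect this general-position step --- showing the defect is locally constant along the enlargement \(W\subseteq W'\) --- to be the main obstacle; Steps 1 and 2 are essentially bookkeeping.
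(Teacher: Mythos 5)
This statement is quoted by the paper from Nevo~\cite{nevo2006algebraic} without proof, so there is no in-paper argument to compare against; judged on its own terms, your proposal is a plan rather than a proof. Steps 1 and 2 are correct bookkeeping: the reduction to the numerical identity \(D_{K\cup L}(T)=D_{K}(T)+D_{L}(T)-D_{\langle\sigma\rangle}(T)\) (only the direction ``identity implies theorem'' is needed, and that direction is sound), the identification \(D_{M}(T)=\dim q_{M}(W')-\dim q_{M}(W)\) with \(W'=W+f_{\tau}\wedge\bigwedge^{1}\mathbb{R}^{n}\), the exactness of the Mayer--Vietoris sequence of exterior face rings, and the formula \(-\Phi(U)=\dim\bigl(U\cap(I_{K}+I_{L})\bigr)-\dim\bigl((U\cap I_{K})+(U\cap I_{L})\bigr)\ge 0\) are all fine. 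But the entire content of the theorem sits in your Step 3, the invariance \(\Phi(W')=\Phi(W)\), and you do not prove it: you appeal to a ``standard lemma'' that the generic map places lex-initial subspaces in general position ``relative to each \(I_{M}\)''. No such lemma applies as stated, because general position with respect to each ideal separately cannot control the defect, which measures a failure of modularity among the four subspaces \(I_{K}\), \(I_{L}\), \(I_{K}\cap I_{L}\), \(I_{K}+I_{L}\) simultaneously; as you yourself observe, the quantity genuinely jumps under arbitrary enlargements and the identity fails when \(K\cap L\) is not a simplex, so the one step where genericity and the simplex hypothesis must actually be exploited is exactly the step left as a black box.

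For comparison, the argument in the cited reference proves the same numerical identity, but does so by working with explicit subspaces of \(\bigwedge^{1}\mathbb{R}^{n}\) of the form \(Y_{S}=\{v:\ q_{S}(f_{\tau}\wedge v)\in q_{S}(W)\}\), for which \(D_{S}(T)=n-\dim Y_{S}\); the trivial inclusions \(Y_{K\cup L}\subseteq Y_{K}\cap Y_{L}\) and \(Y_{K}+Y_{L}\subseteq Y_{\langle\sigma\rangle}\) are complemented by the two nontrivial reverse inclusions, whose proofs use the genericity of the \(f_{i}\) and the fact that \(\sigma\) is a simplex (so that membership in \(I_{\langle\sigma\rangle}=I_{K}+I_{L}\) can be split compatibly with \(W\)). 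Establishing those inclusions, or equivalently your ``defect is constant along \(W\subseteq W'\)'' claim, is precisely the missing mathematical content; until it is supplied, the proposal does not prove the theorem.
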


    Another important and recent result\footnote{This is not how the result is formulated in \cite{bulavka2023volume}, but it is equivalent, since for any \(\sigma\in{\left[n\right] \choose 3}\), \(\sigma\leq_{p}\left\{ 1,3,n\right\}\) if and only if  
    \(\sigma\leq_{\mathrm{lex}}\left\{ 1,3,n\right\}\).} that we will need is:
    \begin{theorem}[Bulavka-Nevo-Peled~\cite{bulavka2023volume}]
    \label{volume rigidity of torus kb and rp2}
    For every triangulation \(K\) of the torus, the Klein bottle or the projective plane, with \(\left|K_{0}\right|=n\), we have \(\left\{ 1,3,n\right\} \in\Delta\left(K\right)\).
    \end{theorem}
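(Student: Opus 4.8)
The plan is to recast the membership $\{1,3,n\}\in\Delta(K)$ as the assertion that an explicit generic matrix has maximal rank, and then to prove that maximality by reducing, via edge contractions, to the finitely many irreducible triangulations of each surface. For the reduction I would apply the corollary on $\mathrm{corank}\,\psi_{K,k}^{d}$ above with $d=2$ and $k=3$: it says that $\mathrm{corank}\,\psi_{K,3}^{2}$ is the number of $2$-faces of $\Delta(K)$ that are $\geq_{\mathrm{lex}}\{1,4,5\}$, so
\[
\mathrm{rank}\,\psi_{K,3}^{2}=\bigl|\{\delta\in\Delta(K)_{2}\ :\ \delta<_{\mathrm{lex}}\{1,4,5\}\}\bigr|.
\]
The $3$-sets strictly below $\{1,4,5\}$ in lexicographic order are exactly $\{1,2,3\},\{1,2,4\},\dots,\{1,2,n\},\{1,3,4\},\dots,\{1,3,n\}$, of which there are $2n-5$; hence $\mathrm{rank}\,\psi_{K,3}^{2}\leq 2n-5$ for \emph{every} triangulation, with equality iff all of these $2n-5$ sets lie in $\Delta(K)$. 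Since each of them is $\leq_{p}\{1,3,n\}$ and $\Delta(K)$ is shifted, this happens iff $\{1,3,n\}\in\Delta(K)$ alone. So it suffices to show $\psi_{K,3}^{2}$ has the maximal rank $2n-5$; by the matrix description recalled above, equivalently the $f_{2}(K)\times 2n$ matrix whose entries are $2\times2$ minors of the generic matrix $X$ has a nonzero $(2n-5)\times(2n-5)$ minor. (This is the generic planar volume rigidity of $K$ from \cite{bulavka2023volume}; via Theorem~\ref{Shifting preserves Betti numbers} one also checks that $\mathrm{corank}\,\psi_{K,3}^{2}$ equals $b_{2}(K)$ plus the number of $2$-faces $\{1,j,\ast\}$ of $\Delta(K)$ with $j\geq 4$.)

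For the maximal-rank statement I would induct on $n$. Since a nonvanishing minor is a single polynomial inequality in the $x_{ij}$, it is enough to exhibit, for each $K$, one rational framework $X$ with $\mathrm{rank}\,\psi_{K,3}^{2}=2n-5$; for the finitely many irreducible triangulations of the torus, $\mathbb{RP}^{2}$ and the Klein bottle from \cite{LavrenchenkoTorus,BarnetteRP2,LawrencenkoNegamiKlein,SulankeNoteKlein} this is a finite base computation. For the inductive step, use that every triangulation $K$ of one of these surfaces arises from an irreducible one by repeated vertex splits, and prove: if $\mathrm{rank}\,\psi_{K,3}^{2}=2n-5$ and $K^{+}$ is obtained by splitting a vertex $v$ between $u$ and $w$, then $\mathrm{rank}\,\psi_{K^{+},3}^{2}=2(n+1)-5$. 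The split adds two columns (for the new vertex) and exactly two $2$-faces, namely the triangles $uv_{1}v_{2}$ and $wv_{1}v_{2}$ on the new edge $v_{1}v_{2}$, while the triangles at $v$ in $K$ get redistributed among $v_{1},v_{2}$; writing the matrix of $\psi_{K^{+},3}^{2}$ in this near-block form and specializing the coordinates of $v_{1},v_{2}$ appropriately (placing them near the old position of $v$ and using that $\mathrm{lk}_{v}(K)$ is a cycle which $u,w$ cut into the two arcs inherited by $v_{1}$ and $v_{2}$), one argues that the two new rows are independent of each other and of the old ones, so the rank rises by exactly $2$. This is the exterior-shifting analogue of Whiteley's theorem that vertex splitting preserves generic rigidity.

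The main obstacle is exactly that inductive step — controlling $\mathrm{rank}\,\psi_{K,3}^{2}$ under a vertex split. Whiteley's argument is already delicate in the Euclidean case, and here the relevant matrix is the ``volume/cone'' matrix of $2\times2$ minors of $X$, with a different combinatorial structure, so the degeneration of the split framework has to be chosen with care to keep the two new rows independent of the old ones and of each other. It is worth noting that if this vertex-split step went through uniformly for all surfaces, the same argument would prove $\{1,3,n\}\in\Delta(K)$ for every triangulation of every closed surface; this is precisely where a proof of the conjecture quoted above would have to work harder, and why the statement here is confined to the torus, the projective plane and the Klein bottle.
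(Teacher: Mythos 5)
Your reformulation is correct: with \(k=3\) the corollary on coranks gives \(\mathrm{rank}\,\psi_{K,3}^{2}=\bigl|\{\delta\in\Delta(K)_{2}:\delta<_{\mathrm{lex}}\{1,4,5\}\}\bigr|\le 2n-5\), with equality precisely when \(\{1,3,n\}\in\Delta(K)\), and your overall plan --- a finite base check on the irreducible triangulations plus a Whiteley-type vertex-splitting step --- is essentially the proof given in \cite{bulavka2023volume}, which is where this theorem is actually proved; the present paper only quotes it. Two corrections, though. First, the inductive step you single out as the main obstacle is not the delicate point: it is \cite[Lemma 3.1]{bulavka2023volume}, and its argument is reproduced verbatim in this paper as the \(i=2\) case of Proposition~\ref{vert split preserves tails} (take \(m=3\)). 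One does not place \(v_{1},v_{2}\) ``near'' the old vertex and perturb; one specializes exactly, substituting the row of one split vertex by the row of the other in \(X\), adds the columns of one split vertex to those of the other, and observes that the two rows of the new triangles \(uv_{1}v_{2},wv_{1}v_{2}\) become supported on the two columns of a single vertex, where they are generically independent, while the complementary submatrix is exactly the matrix of \(\psi^{2}_{K,3}\) for the contracted triangulation; since specialization only lowers rank, \(\mathrm{rank}\,\psi^{2}_{K',3}\ge 2+\mathrm{rank}\,\psi^{2}_{K,3}\), which is all you need. Second, your closing diagnosis is off: this splitting step is completely topology-independent, so it is not what confines the theorem to the torus, \(\mathbb{RP}^{2}\) and the Klein bottle. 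The restriction comes from the base case --- verifying maximal rank for the irreducible triangulations (21, 2 and 29 of them, respectively), which for higher-genus surfaces is exactly the part that remains open and computationally explodes (cf.\ \cite[Conj.~5.1]{bulavka2023volume} and the concluding remarks of this paper).
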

\section{Vertex splitting and edge contraction}
\subsection{Commutativity of vertex splits}
\label{Commutativity of vertex splits section}

\begin{definition}    
    A triangulation \(K\) of a surface that contains no missing triangles which enclose a disk shall be called \emph{prime}.

\end{definition}

\begin{definition}
    Let \(G\) be a subgraph of a surface-embeddable \(K\), and let \(K\leftarrow K'\) be an edge contraction such that an edge \(v_{1}v_{2}\) in \(K'\) contracts to \(v\), with \(v \in G\). We say that \(G\) \emph{survives} the vertex split \(K\rightarrow K'\) if for some \(v' \in \left\{ v_{1},v_{2} \right\}\) the edge \(xv'\) is in \(K'\) for all neighbors \(x\) of \(v\) in \(G\). Otherwise, the split \emph{destroys} \(G\).
    
    For a sequence of splits from \(K_{0}=K\) to \(K_{n}=K'\), \(G\) survives the sequence if it survives each individual split; if \(v'\) is the vertex that witnesses the survival of \(G\) after a single split at a vertex \(v \in G\) (as in the above definition), we rename \(v'\) to be \(v\) and proceed inductively. Otherwise we say that the sequence destroys \(G\).
\end{definition}

We now give a generalized version of~\cite[Prop. 4.1]{nevo2022vertex}.
\begin{lemma} [Commutativity]
\label{commutativity}
    Let \(K=K(0)\) be a triangulation of a surface, and assume \((u,v,w)\) is a simple path in \(K\) such that \(uvw\) is not a simplex in \(K\). Assume we have a sequence of vertex splits:
    \[K=K({0})\rightarrow K({1})\rightarrow\cdots\rightarrow K({n-1})\rightarrow K({n})=K'\]
    such that \((u,v,w)\) does not survive. Then there exists a sequence of vertex splits from \(K\) to \(K'\) where \((u,v,w)\) is destroyed on the first split.
\end{lemma}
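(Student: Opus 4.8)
The plan is to prove this by induction on $n$, the length of the splitting sequence, and to reduce everything to the case of two consecutive splits, where the statement is essentially a local combinatorial assertion about the neighborhoods of the vertices being split. The base case $n=1$ is immediate: if $(u,v,w)$ does not survive the single split $K\to K'$, then by definition that split destroys it. For the inductive step, suppose the result holds for all shorter sequences, and consider a sequence $K=K(0)\to K(1)\to\cdots\to K(n)=K'$ that destroys $(u,v,w)$. Two cases arise according to where the first destroying split occurs. If the very first split $K(0)\to K(1)$ already destroys $(u,v,w)$, we are done. Otherwise, $(u,v,w)$ survives $K(0)\to K(1)$ (after the renaming convention that identifies the surviving vertex $v'$ with $v$, so the path persists in $K(1)$ and $uvw$ is still not a simplex there, since survival preserves non-adjacency of the relevant faces), and by the inductive hypothesis applied to the sequence $K(1)\to\cdots\to K(n)$ we may assume $(u,v,w)$ is destroyed already on the split $K(1)\to K(2)$.

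So the crux is the two-split situation: $K(0)\to K(1)\to K(2)$, where $(u,v,w)$ survives the first split and is destroyed by the second, and we want to reorder these two splits so that $(u,v,w)$ is destroyed on the first. I would set up notation carefully: the first split occurs at some vertex $a\in K(1)$, splitting into an edge $a_1a_2$ of $K(0)$ with a pair of "poles" along which $a$ is split; the second split occurs at a vertex $b\in K(2)$ splitting into an edge $b_1b_2$ of $K(1)$. The key sub-case distinction is whether the two splits interact, i.e.\ whether the vertex $b$ being split second lies in the closed star of the edge $a_1a_2$ created first (equivalently, whether the first split alters the link of $b$). If they are "far apart" — disjoint closed stars — the two vertex splits literally commute as operations on the complex, and whether a split destroys $(u,v,w)$ depends only on the local picture at the path, so one simply performs them in the other order; since the second split (now performed first) destroys the path, we are done. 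If they do interact, then $b$ (or one of $b_1,b_2$) is among the vertices whose link was modified by the first split, and one must track how the "destroying" data — which neighbor $x$ of the split vertex fails to connect to either pole — transfers. Here I would use the explicit description of how a vertex split changes links (the split vertex $v$ in $K'$ has $\mathrm{lk}_v$ partitioned by the two poles $u,w$ into two arcs plus the poles themselves), and argue that the obstruction to survival of $(u,v,w)$ is a condition on at most these local links, so it can be realized by a reordered pair of splits, possibly after adjusting which edge plays the role of $a_1a_2$.

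The main obstacle I anticipate is precisely the interacting case, and within it the bookkeeping of the renaming convention for the path vertices $u,v,w$: after the first split, "$v$" may refer to one of the two children of the originally-split vertex, and the second split might act at $u$, $v$, or $w$ themselves, or at a vertex adjacent to the path, so there is a genuine case analysis ($(u,v,w)\cap\{$split vertices$\}$ can be empty, a single vertex, or a path vertex equal to $v$ versus an endpoint $u$ or $w$, each affecting survival differently since $v$ is the "middle" of the path). The honest technical content is checking that in each configuration the destruction of $(u,v,w)$ by the pair of splits taken together implies it can be arranged to happen on one split which can be performed first without obstructing the remaining splits. I would lean on \cite[Prop. 4.1]{nevo2022vertex} as the template for the non-interacting / generic case and isolate the surface-triangulation hypothesis (needed so that every edge incident to the relevant vertices sits in exactly two triangles, giving a well-defined splitting structure) as the ingredient that keeps the local pictures under control; the non-simplex hypothesis on $uvw$ is what guarantees there is something to destroy in the first place and is preserved under survival, so it propagates correctly through the induction.
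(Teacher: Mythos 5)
Your reduction to the two-consecutive-splits case (induction on the length of the sequence: either the first split destroys $(u,v,w)$, or the path survives it and the inductive hypothesis applied to the tail lets you assume it is destroyed on the second split) matches the paper's skeleton, and your bookkeeping remark that survival preserves the hypothesis that $uvw$ is not a $2$-face is correct, since contraction maps faces to faces. But the heart of the lemma is exactly the step you defer: in the "interacting" case you only say that one must "track how the destroying data transfers" through "a genuine case analysis", and you name this as the main anticipated obstacle without resolving it. That is a genuine gap, not routine verification. Moreover, your proposed dichotomy (disjoint versus overlapping closed stars) is not what drives the argument, and your outline does not explain why the destroying split can be performed first at all, nor why the other split can still be performed afterwards so as to arrive at the same complex $K(2)$ -- your phrase "possibly after adjusting which edge plays the role of $a_1a_2$" leaves precisely this open.

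The paper closes this with two observations absent from your plan, which handle the interacting and non-interacting cases uniformly. First, a split that destroys $(u,v,w)$ must split $v$ between two vertices $a,b$ that separate $u$ and $w$ in the cycle $\mathrm{lk}_v$, hence $a,b$ are not adjacent in that link; since a vertex split can create a new missing triangle only when splitting between two link-adjacent vertices, the destroying split $K(1)\rightarrow K(2)$ creates no new missing triangles. Hence the edge $\epsilon$ created by the first split (the one whose contraction takes $K(1)$ back to $K(0)$) is still contractible in $K(2)$, yielding an intermediate triangulation $K(1)'$; note the paper never transplants the destroying split down to $K(0)$ directly, but instead swaps the order of the two contractions inside $K(2)$. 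Second, one must verify that the edge $v_1v_2$ produced by the destroying split remains contractible in $K(1)'$: if not, $\epsilon$ and $v_1v_2$ would lie on a common induced $4$-cycle $c_1c_2v_1v_2$ in $K(2)$, and then contracting $v_1v_2$ would place $\epsilon$ inside the missing triangle $c_1c_2v$ of $K(1)$, contradicting the contractibility of $\epsilon$ there. Without these two facts (or a worked-out substitute for them), your proposal is an announced strategy rather than a proof.
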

\begin{proof}
 Assume the path is destroyed on the split \(K({i-1})\rightarrow K({i})\) for some \(i \geq 2\). The only way for the split to destroy \((u,v,w)\) is to split \(v\) between two vertices \(a\) and \(b\) such that \(a\) and \(b\) separate \(u\) and \(w\) in the cycle \(\mathrm{lk}_{v}\left(K({i-1})\right)\). In particular, \(a\) and \(b\) are not adjacent in \(\mathrm{lk}_{v}\left(K({i-1})\right)\). Denote by \(v_{1},v_{2}\) the vertices in \(K({i})\) that were split from \(v\), i.e. \(v_{1}v_{2}\) is the edge in \(K({i})\) that contracts to give \(K({i-1})\).
    
    Now, let \(\epsilon\) be the edge that is contracted in \(K({i-1})\) to get \(K({i-2})\). Since a vertex split can create a new missing triangle only when splitting a vertex \(x\) between two vertices \(y,z\) that are adjacent in the link of \(x\), this means that \(\epsilon\) is not part of a missing triangle in \(K({i})\) and thus can be contracted to obtain another triangulation \(K({i-1})'\). (Here we abused notation and denoted a preimage of \(\epsilon\) in $K(i)$, under the map induced by contracting $v_1v_2$, also by \(\epsilon\).)  We will show that \(v_{1}v_{2}\) can be contracted in \(K({i-1})'\), and thus will be done by induction.
Indeed, by contradiction assume not. Then the edges \(\epsilon\) and $v_1v_2$ must belong to an induced $4$-cycle \(c_{1}c_{2}v_{1}v_{2}\) in \(K({i})\). But then, contracting $v_1v_2$ would make \(\epsilon\) part of the missing triangle $c_{1}c_{2}v$ in $K(i-1)$, a contradiction.
\end{proof}

\begin{corollary}
\label{prime triangulation survival}
    Let \(K\) be a prime triangulation of a surface, and \(K'\) be a prime triangulation of the same surface obtained from \(K\) via a sequence of vertex splits. Then there exists a sequence of vertex splits from \(K\) to \(K'\) such that all intermediate triangulations are also prime.
\end{corollary}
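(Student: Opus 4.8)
The plan is to start from an arbitrary sequence of vertex splits $K=K(0)\to\cdots\to K(n)=K'$ and to modify it repeatedly, keeping its length fixed, until none of the intermediate triangulations fails to be prime. The first ingredient I would establish is a structural description of how a single split $L\to L'$ affects the missing triangles that enclose a disk, where $L'$ is obtained from $L$ by splitting a vertex $v$ between two neighbours $u,w$; call such a split \emph{isolating} if $uvw$ is a $2$-face of $L$ and $\deg_{L}v\ge 4$, so that one of the two vertices produced, say $v_{2}$, has degree $3$ in $L'$ with $\mathrm{lk}_{v_{2}}(L')=\{v_{1},u,w\}$. A link-and-disk computation should show that every missing triangle of $L'$ enclosing a disk arises in one of two ways: either as a missing triangle of $L$ that already enclosed a disk and whose three vertices are not separated by the split (its enclosed disk being carried along), or, in the isolating case, as the link $\{v_{1},u,w\}$ of the new degree-$3$ vertex. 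Hence: (i) a non-isolating split of a prime triangulation is prime; and (ii) if $L$ is prime but $L'$ is not, the split is isolating and $L'$ has a \emph{unique} disk-enclosing missing triangle, namely the link of the new degree-$3$ vertex. I would also record the dual fact: once a triangulation has a disk-enclosing missing triangle $T=\{a,b,c\}$, a later split can get rid of it only by separating its three vertices, i.e.\ by splitting the middle vertex of one of the three simple paths read off from $T$ (each has vertex set $T$, which is not a simplex); any such split is automatically non-isolating, while any other split merely replaces $T$ by another disk-enclosing missing triangle.

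Now suppose not all of $K(1),\dots,K(n-1)$ are prime, and let $i\ge 1$ be the least index with $K(i)$ not prime. By (ii), $K(i-1)\to K(i)$ is isolating and $K(i)$ has a unique disk-enclosing missing triangle $T$, the link of a degree-$3$ vertex $v_{2}$. Since $K(n)=K'$ is prime, $T$ must be destroyed somewhere along $K(i)\to\cdots\to K(n)$; by the dual fact some path-version $P$ of $T$ gets destroyed, and applying Lemma~\ref{commutativity} to the subsequence $K(i)\to\cdots\to K(n)$ together with the path $P$ in $K(i)$ yields a new sequence $K(i)\to L(i+1)\to\cdots\to L(n)=K'$ of the same length in which $P$ is destroyed on the first split. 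That split separates the vertices of $T$, so it is non-isolating, and since $T$ was the only disk-enclosing missing triangle of $K(i)$ and now disappears, (i) gives that $L(i+1)$ is prime.

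It remains to remove the single bad triangulation $K(i)$ by rerouting the pair of splits $K(i-1)\to K(i)\to L(i+1)$ through a prime intermediate. Writing $v$ for the split vertex and $\mathrm{lk}_{v}(K(i-1))=(a_{0},\dots,a_{k})$ cyclically with $va_{k}a_{0}$ the $2$-face used by the isolating split (so $u=a_{0},w=a_{k}$), the combined effect of the two splits on the star of $v$ is completely explicit — for instance, when $P$ is centred at $v_{1}$ the resolving split cuts $\mathrm{lk}(v_{1})=(a_{0},\dots,a_{k},v_{2})$ at some $a_{l}$ ($1\le l\le k-1$) and $v_{2}$, and the net effect is to replace $v$ by three vertices $v_{1}',v_{1}'',v_{2}$ with prescribed links, $v_{2}$ now having degree $4$. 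I would check by a direct computation that this same net effect is produced by \emph{two non-isolating} splits: first split $v$ between $a_{l}$ and $a_{k}$, then split the vertex carrying the arc $(a_{k},a_{0},\dots,a_{l})$ between $a_{0}$ and the vertex carrying the arc $(a_{l},\dots,a_{k})$; neither pair is link-adjacent when $1\le l\le k-2$, the value $l=k-1$ being handled by the mirror choice (and $k\ge 3$ since $\deg v\ge 4$), and the cases with $P$ centred at $u$ or $w$ are analogous. Since $K(i-1)$ is prime and both splits are non-isolating, the first yields a prime $M$ and the second returns the (prime) triangulation $L(i+1)$; replacing $K(i-1)\to K(i)\to L(i+1)$ by $K(i-1)\to M\to L(i+1)$ produces a split sequence from $K$ to $K'$ of the same length $n$ whose least non-prime index is now at least $i+2$.

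Iterating, the least non-prime index strictly increases while the length stays equal to $n$, so after finitely many steps every intermediate triangulation is prime, which proves the corollary. I expect the two genuinely laborious points to be the structural classification of disk-enclosing missing triangles under a split — in particular transporting the enclosed disk through the contraction — and the explicit verification that an isolating split followed by the split resolving it can be replaced by two non-isolating splits, uniformly in which vertex of $T$ carries the resolving split; Lemma~\ref{commutativity} is exactly what makes it possible to carry out this local surgery independently of the rest of the sequence.
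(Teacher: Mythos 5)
Your proposal is correct and follows essentially the same route as the paper: locate the first non-prime intermediate (which, starting from a prime triangulation, can only arise from splitting along a $2$-face and is witnessed by a degree-$3$ vertex inside the unique offending disk-bounding missing triangle), use Lemma~\ref{commutativity} to bring the split destroying that triangle immediately after, and then replace the two consecutive splits by two splits passing through a prime intermediate. Your intermediate $M$ (splitting $v$ between $a_l$ and one of $a_0,a_k$) is exactly the paper's $K(1)'$ up to the mirror choice, and your explicit "non-isolating" bookkeeping just makes precise the structural facts the paper uses implicitly when it reduces primality of $K(1)'$ to the absence of degree-$3$ vertices.
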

\begin{proof}
Assume we have a sequence of vertex splits:

    \[K=K({0})\rightarrow K({1})\rightarrow\cdots\rightarrow K({n-1})\rightarrow K({n})=K'\]
such that some intermediate triangulation is not prime. By induction it is enough to assume that \(K({1})\) is not prime, and thus contains a missing triangle \(ABC\) enclosing a disk. Since \(K\) is prime, this happens precisely when \(ABC\) is a 2-face in \(K\), and the vertex \(A\) is split between \(B\) and \(C\) to obtain \(K({1})\). (Again we abuse notation, which we will repeat, and denote by $A$ both the split vertex and one chosen copy of it after the split.) Since \(K'\) is prime, the $3$-cycle \(ABC\) does not survive all the splits, and we can therefore assume by Lemma \ref{commutativity} that \(ABC\) was destroyed on the second split. In this case, the split \(K({0})\rightarrow K({1})\) adds a new vertex \(O\) to the interior of \(ABC\), and \(K({1})\rightarrow K({2})\) splits the vertex \(A\) (without loss of generality) between \(O\) and a vertex \(V\) lying outside of the disc bounded by \(ABC\) and containing $O$. By induction, it is enough to show that we can replace these two vertex splits with \(K({0})\rightarrow K({1})'\rightarrow K({2})\) where \(K'({1})\) is prime.

    \begin{figure}
        \centering
        \includegraphics[scale=0.7]{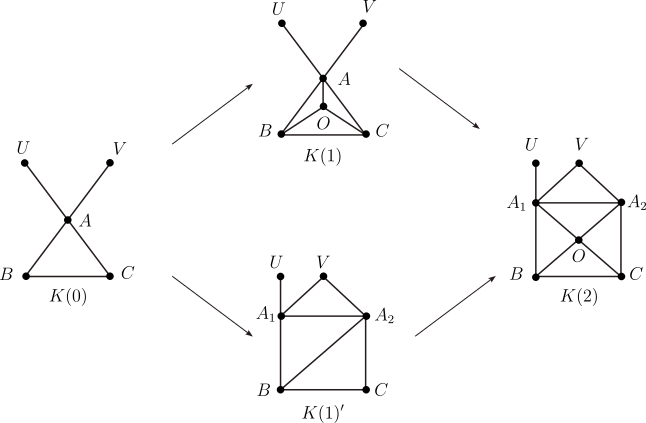}
        \caption{\(K({2})\) can be reached from \(K({0})\) through either \(K({1})\) or \(K({1})'\)}
        \label{fig: commutativity}
    \end{figure}
    
    Indeed, since \(K\) is prime, the degree of \(A\) in \(K\) must be at least \(4\), meaning there exists another neighbor \(U\) of \(A\), other than \(V,B,C\). Let us denote by \(A_{1},A_{2}\) the vertices in \(K({2})\) split from \(A\), so that \(A_{1}B,A_{2}C,A_{1}U\in K({2})\). 
    
    Let \(K({1})'\) be obtained by contracting the vertex \(O\) to \(B\). Obviously, contracting \(A_{1}A_{2}\) in \(K({1})'\) gives \(K\), equivalently,  \(K({1})'\) is obtained from $K$ by vertex split at $A$ between $B$ and $V$. It is enough to see that no vertex in \(K({1})'\) has degree 3 for it to be prime, since \(K\) is already prime. 
    Now, clearly the degrees of all vertices but $A_1$ and $A_2$ can only increase under the split at $A$, so are at least 4.
    Further, 
    \(A_{1}\) neighbors \(A_{2},B,V,U\) while \(A_{2}\) neighbors \(A_{1},B,C,V\) in \(K({1})'\), thus indeed \(K({1})'\) is prime. (See Figure \ref{fig: commutativity}.)
\end{proof}

\subsection{How do vertex splits affect shifting?}
\label{How do vertex splits affect shifting}
It turns out that splitting a vertex will not cause the tail of the shifting to grow, so to speak:
\begin{proposition}
\label{vert split preserves tails}
    Let \(K\) be a triangulated surface, and \(K'\) obtauned from \(K\) by a vertex split at $v$. Then for all \(m \geq 3\):
    \[\left|\mathrm{Tail}_{\mathrm{lex}}\left(\Delta\left(K\right),\left( m+1,m+2\right) \right)\right|\geq\left|\mathrm{Tail}_{\mathrm{lex}}\left(\Delta\left(K'\right),\left( m+1,m+2\right) \right)\right|,\]
    and:
    \[\left|\mathrm{Tail}_{\mathrm{lex}}\left(\Delta\left(K\right),\left( 1,m+1,m+2\right) \right)\right|\geq\left|\mathrm{Tail}_{\mathrm{lex}}\left(\Delta\left(K'\right),\left( 1,m+1,m+2\right) \right)\right|.\]
\end{proposition}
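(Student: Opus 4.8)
The plan is to reduce the statement to a rank comparison between the matrices representing $\psi^d_{K,k}$ and $\psi^d_{K',k}$ for $d\in\{1,2\}$, and then to exhibit, for the vertex split $K \to K'$, a surjection (or at least a rank-nonincreasing linear map) from the relevant coimage for $K'$ to that for $K$. By the Corollary to Proposition~\ref{image of psi}, $\left|\mathrm{Tail}_{\mathrm{lex}}(\Delta(K),(m+1,m+2))\right| = \mathrm{corank}\,\psi^1_{K,m}$ and $\left|\mathrm{Tail}_{\mathrm{lex}}(\Delta(K),(1,m+1,m+2))\right| = \mathrm{corank}\,\psi^2_{K,m}$, and likewise for $K'$. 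So it suffices to show $\mathrm{corank}\,\psi^d_{K,m} \geq \mathrm{corank}\,\psi^d_{K',m}$, equivalently (since $\psi^d_{K',m}$ has one more row, coming from the extra $d$-faces created by the split, but the same target) $\mathrm{rank}\,\psi^d_{K',m} - \mathrm{rank}\,\psi^d_{K,m} \leq f_d(K') - f_d(K)$. Here $f_1(K')-f_1(K)=3$ and $f_2(K')-f_2(K)=2$ for a vertex split of a surface triangulation.

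First I would set up notation for the split: $v$ is split between $u$ and $w$, creating $v_1,v_2$ with $v_1v_2$, $uv_1v_2$, $wv_1v_2 \in K'$; write $\mathrm{lk}_v(K)$ as a cycle $(u=a_0,a_1,\dots,a_r=w,a_{r+1},\dots,a_s,a_0)$, so that in $K'$ the vertex $v_1$ is joined to $u,a_1,\dots,a_{r-1},w,v_2$ and $v_2$ to $w,a_{r+1},\dots,a_s,u,v_1$ (up to the labelling). The key point is that there is a simplicial map $\pi\colon K' \to K$ sending $v_1,v_2 \mapsto v$ and fixing everything else (this is just the edge contraction), and it induces surjections $q_K\circ(\wedge^{d+1}\pi) $ type identifications; but more directly, one compares the two matrices. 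The rows of $\psi^d_{K',m}$ are indexed by $d$-faces of $K'$: those not containing $v_1$ or $v_2$ are in bijection with the corresponding rows of $\psi^d_{K,m}$ and have (after the harmless relabelling of the generic matrix $X$, which does not change coranks since shifting is independent of genericity choice) literally the same entries; the rows indexed by faces through $v_1$ or $v_2$ are the "new" rows, and one wants to bound how much rank they can add. I would argue: collapse $v_1,v_2$ back to $v$ on the domain side too — the composition $\bigoplus \wedge^1\mathbb{R}^{n-1} \to \bigoplus\wedge^1\mathbb{R}^{n} \xrightarrow{\psi^d_{K',m}} \wedge^{d+1}K'$, followed by the quotient $\wedge^{d+1}K' \to \wedge^{d+1}K$ induced by $\pi$, equals $\psi^d_{K,m}$ after identifying $e_v$ with the image of $e_{v_1}$ (and $e_{v_2}$). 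So $\mathrm{rank}\,\psi^d_{K,m}$ is the rank of $\psi^d_{K',m}$ followed by a projection whose kernel has dimension at most $f_d(K')-f_d(K)$ — precisely the span of $e_{\sigma\cup\{v_1\}} - e_{\sigma\cup\{v_2\}}$ and $e_{v_1v_2\cup\tau}$ type classes that die in $\wedge K$. Counting these gives exactly $3$ for $d=1$ (the classes of $e_{v_1v_2}$, and $e_{uv_1}-e_{uv_2}$, $e_{wv_1}-e_{wv_2}$ — the remaining differences $e_{av_1}-e_{av_2}$ vanish already in $\wedge K'$ since only one of $av_1,av_2$ is a face) and $2$ for $d=2$ (the classes of $e_{uv_1v_2}$, $e_{wv_1v_2}$), matching $f_d(K')-f_d(K)$.

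The main obstacle I anticipate is making the last counting argument airtight: one must verify that the kernel of the induced map $\wedge^{d+1}K' \to \wedge^{d+1}K$, restricted to the image of $\psi^d_{K',m}$ — or really just its total dimension — is at most $f_d(K')-f_d(K)$, and that the "old" rows of the two matrices genuinely agree after relabelling. The relabelling is subtle because passing from the generic $X$ on $n$ columns/rows to one on $n-1$ requires knowing that a generic specialization still computes $\Delta$; this is where I would invoke that $\Delta^{<}(K)$ is independent of the choice of generic basis and that algebraic independence is preserved under the substitution merging the $v_1$- and $v_2$-rows of $X$ into the $v$-row (equivalently, work with two separate generic matrices and note both compute the same shifted complex). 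Once that identification is in hand, the inequality $\mathrm{rank}\,\psi^d_{K,m} \geq \mathrm{rank}\,\psi^d_{K',m} - (f_d(K')-f_d(K))$ is immediate from $\dim\ker(\text{projection}) \le f_d(K')-f_d(K)$, and rearranging gives the claimed corank inequality, hence the two tail inequalities for all $m\geq 3$.
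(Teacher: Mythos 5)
There is a genuine gap: the inequality in your reduction is reversed, and everything after it proves the wrong direction. With corank meaning the dimension of the cokernel, the Corollary to Proposition \ref{image of psi} gives \(\left|\mathrm{Tail}_{\mathrm{lex}}\left(\Delta(K),(m+1,m+2)\right)\right|=\mathrm{corank}\,\psi^{1}_{K,m}=f_1(K)-\mathrm{rank}\,\psi^{1}_{K,m}\), and similarly for \(d=2\) and for \(K'\). Hence the desired inequality \(\mathrm{corank}\,\psi^{d}_{K,m}\ge\mathrm{corank}\,\psi^{d}_{K',m}\) is equivalent to \(\mathrm{rank}\,\psi^{d}_{K',m}-\mathrm{rank}\,\psi^{d}_{K,m}\ \ge\ f_d(K')-f_d(K)\), i.e.\ the rank must go up by \emph{at least} \(3\) (for \(d=1\)) resp.\ \(2\) (for \(d=2\)); you wrote ``\(\le\)''. (Also, the two maps do not have ``the same target'': the targets are \(\bigwedge^{d+1}K\) and \(\bigwedge^{d+1}K'\), of dimensions \(f_d(K)\) and \(f_d(K')\).) Your contraction argument --- composing \(\psi^{d}_{K',m}\) with the projection \(\bigwedge^{d+1}K'\to\bigwedge^{d+1}K\) induced by collapsing \(v_1,v_2\) to \(v\) and bounding the kernel of that projection by \(f_d(K')-f_d(K)\) --- yields only \(\mathrm{rank}\,\psi^{d}_{K,m}\ge\mathrm{rank}\,\psi^{d}_{K',m}-(f_d(K')-f_d(K))\), i.e.\ \(\mathrm{corank}\,\psi^{d}_{K',m}\ge\mathrm{corank}\,\psi^{d}_{K,m}\), which is the opposite of the statement. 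That direction is essentially trivial (a quotient kills at most as many dimensions as its kernel) and cannot carry the content of the proposition; a telling symptom is that your argument never uses the hypothesis \(m\ge 3\), which is essential.

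What actually must be shown is that the new rows are as independent as possible from the old ones: the three new edge rows (resp.\ two new triangle rows) raise the rank by the full \(3\) (resp.\ \(2\)), and this is exactly where genericity and \(m\ge 3\) enter. The paper does this for \(d=1\) by choosing a subgraph \(E\subseteq K\) containing all edges at \(v\) whose rows form a basis of the row space of \(\psi^{1}_{K,m}\), and applying \cite[Lemma 3.3.1]{nevo2007algebraic} to the split subgraph \(E'\subseteq K'\) (this is where \(m\ge 3\) is used: the three new edges at the split vertex must be absorbed by the generic columns attached to that vertex); for \(d=2\) it uses the specialization trick of \cite[Lemma 3.1]{bulavka2023volume}: substitute \(x_{wi}\mapsto x_{ui}\), add the \(u\)-columns to the \(w\)-columns, note that the two rows of the \(2\)-faces containing the contracted edge are then supported on the \(u\)-columns and independent, and that the remaining submatrix is exactly the matrix of \(\psi^{2}_{K,m}\). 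If you wish to keep your projection picture, you would have to prove a \emph{lower} bound on the rank of the new rows modulo the old row space (a genericity statement), not an upper bound on the kernel of the collapse map.
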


\begin{proof}
    We must show that for \(i=1,2\) we have:
    \[\mathrm{corank}\psi_{K',m}^{i}\leq\mathrm{corank}\psi_{K,m}^{i}.\]
    
    For \(i=1\), note that in the matrix of \(\psi_{K,m}^{1}\), all the rows corresponding to edges containing \(v\) 
    are linearly independent. Therefore, we can extend to a basis for the row space of the matrix to get a 
    subgraph
    \(E\subseteq K\)  such that \(E\) contains all the edges containing \(v\), and \(\psi_{E,m}^{1}\) is of full rank, and its rank is equal to that of \(\psi_{K,m}^{1}\). 
    The vertex split at $v$ in $K$ restricted to $E$ yields a subgraph $E'$ of $K'$ with 3 more edges than in $E$; and recall that $3\leq m$. Thus, by~\cite[Lemma 3.3.1]{nevo2007algebraic}, we get that \(\psi_{E',m}^{1}\) is of full rank, so its rank equals 3 plus the rank of \(\psi_{K,m}^{1}\). As $K'$ has 3 more edges than $K$, we conclude that $\mathrm{corank}\psi_{K',m}^{1}\leq\mathrm{corank}\psi_{K,m}^{1}$, as desired.

    For \(i=2\), since \(\psi_{K',m}^{2}\) has precisely two more rows than \(\psi_{K,m}^{2}\), we want to show that the rank of \(\psi_{K',m}^{2}\) is at least 2 more than the rank of \(\psi_{K,m}^{2}\). The proof in this case is the same as the beginning of the proof of \cite[Lemma 3.1]{bulavka2023volume}:
    
    If the edge we are contracting is \(\left\{u<w\right\}\), we assume without loss of generality that $u$ and $w$ are the first of the \(n\) vertices of \(K'\). Then we substitute all \(x_{wi}\) with \(x_{ui}\) in the matrix of \(\psi_{K',m}^{2}\) to get a matrix \(A'\), and then add all columns of \(A'\) corresponding to \(u\) to the  corresponding columns of \(w\) to get a matrix \(A\). Clearly the rank of \(\psi_{K',m}^{2}\) is greater or equal to that of \(A\). Moreover, the two rows of \(A\) corresponding to 2-faces containing \(\left\{u<w\right\}\) are supported on the columns corresponding to \(u\), and these rows are independent. Since the submatrix of the remaining rows and columns is precisely \(\psi_{K,m}^{2}\), we get:
    \[\mathrm{rank} A \geq 2 + \mathrm{rank} \psi_{K,m}^{2},\]
    and we are done.
\end{proof}

\begin{proposition}
\label{contraction in triangle preserves 2nd dimension}
    Let \(K\) be a triangulated surface.
    Let \(D\subset K\) be a subcomplex with \(\left|D\right|\) a disk, and \(\partial D\) a triangle. If \(K\rightarrow K'\) is a contraction of an edge internal to \(D\), then:
    \[\mathrm{Tail}_{\mathrm{lex}}\left(\Delta\left(K\right),\left( 1,4,5\right) \right)=\mathrm{Tail}_{\mathrm{lex}}\left(\Delta\left(K'\right),\left( 1,4,5\right) \right).\]
\end{proposition}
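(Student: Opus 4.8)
The plan is to reduce this to Proposition~\ref{contraction in triangle preserves 2nd dimension}'s weaker cousin, Proposition~\ref{vert split preserves tails} for \(i=2\), and then show that the extra structure---\(|D|\) being a disk with triangular boundary---forces equality rather than mere inequality. By the Corollary relating coranks to tails, the claimed equality of tails is equivalent to
\[\mathrm{corank}\,\psi_{K,4}^{2}=\mathrm{corank}\,\psi_{K',4}^{2},\]
i.e. \(\mathrm{rank}\,\psi_{K',4}^{2}=\mathrm{rank}\,\psi_{K,4}^{2}+2\) (since \(K'\) has two fewer 2-faces than \(K\) after contracting an internal edge of a disk, so the matrix of \(\psi_{K,4}^{2}\) has exactly two more rows). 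By Proposition~\ref{vert split preserves tails} applied to the vertex split \(K'\to K\), we already have \(\mathrm{rank}\,\psi_{K,4}^{2}\geq\mathrm{rank}\,\psi_{K',4}^{2}+2\); so the whole content is the reverse inequality \(\mathrm{rank}\,\psi_{K,4}^{2}\leq\mathrm{rank}\,\psi_{K',4}^{2}+2\), equivalently that the two new rows of \(\psi_{K,4}^{2}\) (those 2-faces containing the contracted edge \(v_1v_2\)) together with all old rows span a space of dimension at most \(2+\mathrm{rank}\,\psi_{K',4}^{2}\).

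First I would set up coordinates as in the proof of Proposition~\ref{vert split preserves tails}: write the contracted edge as \(\{v_1<v_2\}\), and note that since the edge is internal to \(D\) whose boundary \(\partial D\) is a single triangle \(abc\), the two 2-faces of \(K\) containing \(v_1v_2\), say \(uv_1v_2\) and \(wv_1v_2\), have \(u,w\) also internal to \(D\) or on \(\partial D\)---in any case the split is "small" and localized inside \(D\). The key point to exploit is that \(|D|\) is a disk with \(\partial D\) a triangle: collapsing \(D\) does not change the homotopy type, and more importantly the link structure at the split vertex \(v\) in \(K'\) is a cycle all of whose vertices lie in \(D\cup\partial D\); the three boundary vertices \(a,b,c\) of \(D\) are unaffected. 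I would then argue that after the column operations of Proposition~\ref{vert split preserves tails} (substitute \(x_{v_2,i}\mapsto x_{v_1,i}\), add \(v_1\)-columns into \(v_2\)-columns), the matrix \(A\) obtained has its two new rows supported only on \(v_1\)-columns and independent, with the complementary submatrix equal to the matrix of \(\psi_{K',4}^{2}\); so far this only gives \(\mathrm{rank}\,A\geq 2+\mathrm{rank}\,\psi_{K',4}^{2}\). To get the reverse, I would show that \(\mathrm{rank}\,\psi_{K,4}^{2}=\mathrm{rank}\,A\), i.e. that the column/entry substitutions did not lose rank; this is where the triangular boundary of \(D\) enters, guaranteeing that the generic entries that get identified only appear in the "redundant" part of the matrix in a way that, because \(D\) is a disk (hence the corresponding chain-level map is an isomorphism onto its image after the identification), preserves rank.

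The main obstacle, and the step I expect to require the most care, is precisely this last rank-preservation claim: showing that contracting an edge inside a disk-with-triangular-boundary does not \emph{lose} rank in \(\psi^{2}\), not merely that it does not gain rank. The inequality "does not gain" is soft (it is Proposition~\ref{vert split preserves tails}); the equality needs that the edge contraction inside \(D\) is, at the level of the relevant exterior-algebra maps, reversible up to the two expected dimensions. I would handle this by a Mayer--Vietoris / gluing argument in the spirit of Theorem~\ref{Shifting a union over a simplex}: write \(K = (K\setminus\mathring{D})\cup D\) glued along the triangle \(abc=\partial D\) (a simplex), and similarly \(K' = (K'\setminus\mathring{D'})\cup D'\) where \(K\setminus\mathring{D}=K'\setminus\mathring{D'}\) is unchanged; then compare \(\Delta(K)\) and \(\Delta(K')\) face-by-face using the formula in Theorem~\ref{Shifting a union over a simplex}, observing that \(D_{D}(T)\) and \(D_{D'}(T)\) agree for every \(T\) with \(|T|=3\) and \(T\geq_{\mathrm{lex}}(1,4,5)\) because both \(D\) and \(D'\) are disks bounded by a triangle and any triangulated disk bounded by a triangle has \(\Delta\) equal to the cone over the boundary of a triangle with the right \(f\)-vector padding (so the quantities \(D_D(T)\) depend only on \(|T|\) and the \(f\)-vector of \(D\), which is controlled). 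The remaining bookkeeping---that the \(f\)-vectors of \(D\) and \(D'\) differ in exactly the way that makes \(D_D(T)+[\text{const}] = D_{D'}(T)\) cancel against the change in the ambient count---is routine once the gluing formula is in place.
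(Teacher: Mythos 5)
Your first (matrix-rank) route does not work as sketched. Two small points first: the tail at \((1,4,5)\) corresponds to \(\psi^{2}_{K,3}\), not \(\psi^{2}_{K,4}\), and equality of a single corank only gives equality of tail \emph{sizes}; to get equality of \emph{sets} you need the coranks for all \(k\ge 3\), combined with shiftedness and the Betti-number count of \(2\)-faces avoiding vertex \(1\) (Theorem~\ref{Shifting preserves Betti numbers}). The substantive gap is the reverse inequality \(\mathrm{rank}\,\psi^{2}_{K,k}\le 2+\mathrm{rank}\,\psi^{2}_{K',k}\): this is an \emph{upper} bound on the rank of a generic matrix, while the substitute-and-add-columns device of Proposition~\ref{vert split preserves tails} (following Bulavka--Nevo--Peled) can only produce \emph{lower} bounds, since specializing generic entries and performing column operations can only lose rank. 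Your proposed fix, ``show the substitutions did not lose rank,'' is both unsupported (it is essentially the statement you are trying to prove, and the disk structure is invoked only rhetorically) and insufficient: even granting \(\mathrm{rank}\,\psi^{2}_{K,k}=\mathrm{rank}\,A\), the block form of \(A\) does not give \(\mathrm{rank}\,A\le 2+\mathrm{rank}\,\psi^{2}_{K',k}\), because the rows indexed by \(2\)-faces not containing the contracted edge generally have nonzero entries in the columns of the split vertex, so those columns may contribute more than \(2\) to the rank. Nowhere do you exhibit the linear dependencies that the upper bound requires.

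Your fallback gluing argument points in the right direction---the paper's proof is indeed built on Theorem~\ref{Shifting a union over a simplex}---but as written it misapplies that theorem: \(K\setminus\mathrm{int}(D)\) and \(D\) intersect in \(\partial D\), the hollow triangle on \(a,b,c\), which is \emph{not} a simplex, so the hypothesis \(K\cap L=\langle\sigma\rangle\) fails. The missing idea is to cap with the \(2\)-face \(abc\) and then account for its removal. Concretely, when \(abc\in K'\) (the split is a stacking) one writes \(K\) as \(K'\coprod_{abc}T\) (a tetrahedron glued along the genuine simplex \(abc\)) minus the face \(abc\); Theorem~\ref{Shifting a union over a simplex} gives \(\Delta(K'\coprod_{abc}T)_2=\Delta(K')_2\cup\{(1,2,m_{1,2}),(1,3,m_{1,3}),(2,3,m_{2,3})\}\), and Theorem~\ref{Shifting preserves Betti numbers} identifies the unique \(2\)-face lost when \(abc\) is deleted as \((2,3,m_{2,3})\); hence \(\Delta(K)_2\) and \(\Delta(K')_2\) differ only in faces lexicographically below \((1,4,5)\), which is the claim. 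When \(abc\notin K'\) one compares both \(K\) and \(K'\) with the complexes obtained by filling in \(abc\), using that \(D\cup\{abc\}\) is a triangulated \(2\)-sphere whose shifting is known explicitly---not your unproven assertion that the shifting of an arbitrary triangulated disk with triangular boundary (and hence \(D_D(T)\)) is determined by its \(f\)-vector. Without the capping step and the Betti-number bookkeeping, your outline does not close.
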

\begin{proof}
    Denote \(\partial D_{0}=\left\{ a,b,c\right\} \). If \(abc\) is a 2-face in \(K'\), then \(K\) is achieved by gluing a tetrahedron \(T\) to \(K'\) at \(abc\) along one of its faces, and then removing \(abc\), a.k.a. \emph{stacking}. By \ref{Shifting a union over a simplex}
    we have:
    \[\Delta\left(K'\coprod_{abc}T\right)_{2}=\Delta\left(K'\right)_{2}\cup\left\{ \left(1,2,m_{1,2}\right),\left(1,3,m_{1,3}\right),\left(2,3,m_{2,3}\right)\right\} ,\]
    where \(m_{i,j}=\min\left\{ m\in\mathbb{N}|\ m>j,\ \left(i,j,m\right)\notin\Delta\left(K'\right)\right\} \). Since \(K\) is attained by deleting \(abc\) from \(K'\coprod_{abc}T\), and this reduces the second betti number by one, we get by \ref{Shifting preserves Betti numbers}
    that:
    \[\Delta\left(K'\coprod_{abc}T\right)_{2}\backslash\Delta\left(K\right)_{2}=\left(2,3,m_{2,3}\right),\]
    which shows what we wanted. A similar argument shows that the same holds when \(abc\) is not a 2-face in \(K'\), using the fact that for any triangulation \(S\) of a 2-sphere, we have (see \cite{KalaiShifting}):
    \[\Delta\left(S\right)_{2}=\left\{ \sigma\in{\left[n\right] \choose 3}|\ \sigma\leq_{\mathrm{lex}}\left(1,3,n\right)\right\} \cup\left\{ \left(2,3,4\right)\right\} .\]
\end{proof}

\section{Critical Regions}
\label{critical regions section}

Our goal here is to define and prove important properties of critical regions (see Definition 4.5 onwards), which are a key tool to proving Theorem \ref{alg exists}. Their defining feature is that \ref{vert split preserves tails} holds with equality of sets for edges, roughly speaking. This feature is stated precisely in Proposition \ref{vert split preserves crit regions}.

\begin{definition}
    For a surface-embeddable complex \(D\) with \(n\) vertices, and \(1\leq k \leq n\), we shall denote by \(M_{D,k}\) the submatrix of \(\psi_{D,k}^{1}\) obtained by restricting the rows to those corresponding to the internal edges of \(D\), and restricting to columns to those corresponding to the internal vertices of \(D\).
\end{definition}
 
\begin{proposition}
\label{vert split preserves crit regions}
    Let \(D' \subseteq K'\) be surface-embeddable complexes, and let \(K'\rightarrow K\) be a contraction of an internal edge of \(D'\). Let \(D\subseteq K\) be the subcomplex obtained from \(D'\) by contracting the aforementioned internal edge. Assume also that \(D'\) has three more internal edges than does \(D\).\footnote{This is equivalent to assuming that the contraction does not create a diagonal.} For \(k\geq 3\), if the rows of \(M_{D,k}\) are linearly independent, then:
    \[\mathrm{Tail}_{\mathrm{lex}}\left(\Delta\left(K\right),\left\{ k+1,k+2\right\} \right)=\mathrm{Tail}_{\mathrm{lex}}\left(\Delta\left(K'\right),\left\{ k+1,k+2\right\} \right).\]

    This holds even when \(M_{D,k}\) is a \(0\times 0\) matrix, which happens precisely when \(D\) consists of a single 2-face.
\end{proposition}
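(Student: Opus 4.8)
The plan is to reduce the statement to a rank computation on the matrices $\psi^1_{K,k}$ and $\psi^1_{K',k}$, exactly as in Proposition~\ref{vert split preserves tails}, but now tracking that the extra rank gained by the split is concentrated in the rows and columns indexed by the critical region $D$. By the Corollary relating $\mathrm{corank}\,\psi^1_{K,k}$ to $|\mathrm{Tail}_{\mathrm{lex}}(\Delta(K),\{k+1,k+2\})|$, and since $\mathrm{Tail}_{\mathrm{lex}}$ of a shifted complex is determined by its size once we know the relevant structure, it suffices to prove
\[
\mathrm{rank}\,\psi^1_{K',k} = \mathrm{rank}\,\psi^1_{K,k} + 3,
\]
because $K'$ has exactly three more edges than $K$ (the split adds three edges: $v_1v_2$ and the two edges from $v_1,v_2$ to the ``new'' pair of apex vertices — here we use that the contraction does not create a diagonal, i.e. $D'$ has three more internal edges than $D$). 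Proposition~\ref{vert split preserves tails} already gives the inequality $\mathrm{rank}\,\psi^1_{K',k}\geq \mathrm{rank}\,\psi^1_{K,k}+3$; the content here is the reverse inequality, i.e. that the split does \emph{not} add more than three to the rank, and this is where the hypothesis that the rows of $M_{D,k}$ are linearly independent enters.

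The key step is the following linear-algebra observation. Let $v$ be the vertex of $K$ being split into $v_1,v_2$. Pick a basis of the row space of $\psi^1_{K,k}$ that contains all rows indexed by edges through $v$ (possible, as those rows are independent — same fact used in Prop.~\ref{vert split preserves tails}), giving a subgraph $E\subseteq K$ with $\psi^1_{E,k}$ of full rank equal to $\mathrm{rank}\,\psi^1_{K,k}$. After the split we get $E'\subseteq K'$ with three more edges. By~\cite[Lemma 3.3.1]{nevo2007algebraic} (as in Prop.~\ref{vert split preserves tails}), $\psi^1_{E',k}$ is again of full rank, so $\mathrm{rank}\,\psi^1_{E',k}=\mathrm{rank}\,\psi^1_{K,k}+3$. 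Now I would argue that $E'$ can be chosen so that $E'\supseteq K'\setminus(\text{internal edges of } D')$ while still being a row basis — equivalently, that \emph{every} edge of $K'$ that is not an internal edge of $D'$ lies in the row span of the rows through the split vertex together with the rows of $K\setminus(\text{internal edges of }D)$. This reduces to the claim that the only rows whose independence is ``at risk'' of being lost by the contraction $K'\to K$ are those indexed by internal edges of $D$, and that is governed precisely by the minor $M_{D,k}$: the hypothesis that the rows of $M_{D,k}$ are linearly independent says these internal-edge rows of $\Delta$ remain independent even after restricting to the internal-vertex columns of $D$, which is exactly the ``local'' obstruction to the rank dropping. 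The $0\times 0$ case (when $D$ is a single $2$-face) is the degenerate situation where there are no internal edges or vertices at all, so the hypothesis is vacuous and the contraction is the elementary stacking/un-stacking handled essentially by Proposition~\ref{contraction in triangle preserves 2nd dimension}.

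The main obstacle I anticipate is making rigorous the bookkeeping that separates the ``global'' rank of $\psi^1_{K,k}$ from the ``local'' rank carried by $M_{D,k}$: one must show that extending a row basis of $K\setminus(\text{internal edges of }D)$ by the (independent) rows of $M_{D,k}$ — or rather their un-restricted versions in $\psi^1_{K,k}$ — yields a row basis of all of $\psi^1_{K,k}$, and symmetrically for $K'$, so that the rank increment is exactly $3$ and is realized entirely within the block indexed by $D'$. Concretely, I expect to decompose each matrix into a block form $\begin{pmatrix} A & B \\ 0 & M_{\bullet,k}\end{pmatrix}$ after suitable column operations (boundary-vertex columns versus internal-vertex columns of $D$, using that internal edges of $D$ have zero entries outside $D$'s vertices — here the surface-embeddability and the definition of internal edge/vertex are used to control the support of these rows), and then invoke the Schur-complement-type identity $\mathrm{rank} = \mathrm{rank}\,A + \mathrm{rank}\,M_{\bullet,k}$ when the off-diagonal block is absorbed. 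The hypothesis on $M_{D,k}$ makes its rank equal to its number of rows both before and after the split (the split changes $M_{D,k}$ to $M_{D',k}$ with three more rows and, since no diagonal is created, three more relevant columns, keeping full row rank by~\cite[Lemma 3.3.1]{nevo2007algebraic} again), so the increment is pinned to $3$ and the tails coincide as \emph{sets}, not merely in size, because both are initial segments (in reverse-lex) of the same shape dictated by Proposition~\ref{image of psi}.
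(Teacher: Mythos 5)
Your overall skeleton — put $\psi^1_{K,k}$ and $\psi^1_{K',k}$ in block form with the rows of internal edges of $D$ (resp.\ $D'$) and the columns of internal vertices isolated, and deduce equality of coranks from full row rank of the local blocks — is the same structural idea as the paper's proof. But the crucial step is missing: you need that the rows of $M_{D',k}$ are linearly independent given that the rows of $M_{D,k}$ are, and you dispose of it by citing \cite[Lemma 3.3.1]{nevo2007algebraic} ``again''. That lemma is about the full matrices $\psi^1_{E,k}$ of graphs (the generic $k$-hyperconnectivity setting); it does not apply to the column-restricted matrices $M_{D,k}$, where all columns of boundary vertices have been deleted, the contracted edge may join an internal vertex $v$ to a \emph{boundary} vertex $u$, and rows of edges $\{u,w\}$ may restrict to zero rows. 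This is exactly why the paper does not invoke the lemma but mimics and adapts its proof: substitute $x_{iv}\mapsto x_{iu}$ to get $B$, add the $u$-columns to the $v$-columns to get $\overline{B}$, observe that $\overline{B}$ is $M_{D,k}$ with one zero row and some doubled (or zeroed) rows, use independence of $M_{D,k}$ to force any vanishing combination to be supported on those special rows with sign-paired coefficients, and then kill these via the generic $3\times k$ submatrix sitting in the $v$-columns of $B$. Without such an argument your central claim is unsupported. (A related slip: $M_{D',k}$ has $k$ more columns than $M_{D,k}$, one new internal vertex contributing $k$ columns — not ``three more relevant columns''; the ``no diagonal created'' hypothesis governs the row count, not the columns.)

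A second, smaller gap is the upgrade from equal corank to equality of tails \emph{as sets}. Knowing $\left|\mathrm{Tail}_{\mathrm{lex}}(\Delta(K),\{k+1,k+2\})\right|=\left|\mathrm{Tail}_{\mathrm{lex}}(\Delta(K'),\{k+1,k+2\})\right|$ for the single value $k$ does not determine the sets: two shifted complexes can have tails of the same size but different shape (e.g.\ $\{(k+1,k+2),(k+1,k+3),(k+1,k+4)\}$ versus $\{(k+1,k+2),(k+1,k+3),(k+2,k+3)\}$ are both legitimate shifted tails of size three), so your appeal to ``initial segments of the same shape'' is not a proof. The fix is the one the paper uses: since $M_{D,k}$ is a submatrix of $M_{D,k'}$ with the same rows, full row rank persists for all $k'\geq k$, hence the size equality holds for all $k'\geq k$, and the differences of consecutive tail sizes, together with shiftedness, pin down the sets. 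Incorporating these two points would essentially reproduce the paper's proof.
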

\begin{proof}
    We shall show equality of sizes for all $k\ge 3$:

    \begin{equation}
        \label{eq:tails}\left|\mathrm{Tail}_{\mathrm{lex}}\left(\Delta\left(K\right),\left\{ k+1,k+2\right\} \right)\right|=\left|\mathrm{Tail}_{\mathrm{lex}}\left(\Delta\left(K'\right),\left\{ k+1,k+2\right\} \right)\right|.
    \end{equation}
   
To show it, it is enough to show that the rows of \(M_{D',k}\) are also linearly independent. Indeed, as the matrices of \(\psi_{K,k}^{1}\) and \(\psi_{K',k}^{1}\) have the forms:
    \[\begin{pmatrix}A & 0\\
\ast & M_{D,k}
\end{pmatrix},\begin{pmatrix}A & 0\\
\ast & M_{D',k}
\end{pmatrix}\] respectively,
    for some matrix \(A\), and the rows of each of \(M_{D,k}\) and \(M_{D',k}\) are assumed to be independent, and \(M_{D',k}\) has three more rows than \(M_{D',k}\), we conclude that the cokernels of \(\psi_{K,k}^{1}\) and \(\psi_{K',k}^{1}\) have the same dimension, which yields (\ref{eq:tails}). 
    
Next, since (\ref{eq:tails}) will also hold for all \(k'\geq k\) (as \(M_{D,k}\) is a submatrix of \(M_{D,k'}\) with the same number of rows), we will get the desired equality of sets, since for \(k+1\leq x<y\) we have:
    \[\left\{ x,y\right\} \in\mathrm{Tail}_{\mathrm{lex}}\left(\Delta\left(K\right),\left\{ k+1,k+2\right\} \right)\]
    if and only if:
    \[y-x\leq\left|\mathrm{Tail}_{\mathrm{lex}}\left(\Delta\left(K\right),\left\{ x,x+1\right\} \right)\right|-\left|\mathrm{Tail}_{\mathrm{lex}}\left(\Delta\left(K\right),\left\{ x+1,x+2\right\} \right)\right|.\]

    In the case that \(D\) is a single 2-face, then \(M_{D',k}\) is a generic \(3\times k\) matrix, so its rows are linearly independent, as desired.

    In the general case, we mimic the proof of \cite[Lemma 3.3.1]{nevo2007algebraic}: Let \(\left\{ u,v\right\} \) be the edge of \(D'\) we contract (contracting \(v\) to \(u\)), with \(v\) internal in \(D'\). Without loss of generality \(u > v\), and all other vertices are smaller than \(v\).

    Now we replace all \(x_{iv}\) in \(M_{D',k}\) with \(x_{iu}\) to get a new matrix \(B\). 
     
     It is enough to show that the rows of \(B\) are independent to know that the same is true for \(M_{D',k}\).

    Assume some linear combination of the rows of \(B\) equals zero. Let \(\overline{B}\) be the matrix obtained from \(B\) by adding the columns of \(u\) to the corresponding columns of \(v\), and deleting the columns of \(u\). Then, a linear combination of the rows of \(\overline{B}\) with the same coefficients also equals zero. Note that \(\overline{B}\) can be obtained from the matrix of \(M_{D,k}\) by adding a zero row (for the edge \(\left\{ u,v\right\} \)), and doubling the rows of the edges \(\left\{ u,w\right\} \) where \(w\) neighbors \(u\) and \(v\) in \(D'\) (or adding a row of zeroes for \(\left\{ u,w\right\} \) if \(\left\{ u,w\right\} \) is not an internal edge in \(D\)). Since the rows of \(M_{D,k}\) are independent, all the coeffecients must be zero save for the doubled rows and the zero rows, and further, the coefficients of the pairs of doubled rows must differ only by sign. The submatrix of \(B\) consisting of the mentioned rows, and the columns of \(v\), is a generic \(3\times k\) matrix, and therefore its rows are independent. We conclude that all coefficients are zero, as desired.
\end{proof}

We need the following combinatorial-topological definition in order to define critical regions:

\begin{definition}
    For a simplicial complex \(K\) and a set \(C\subseteq K_{d}\) (for some \(d\geq 0\)), we say that \(K\) is \emph{\(C\)-connected} if for any two different faces \(\sigma,\sigma' \in K_{d+1}\), there exists some sequence of $(d+1)$-faces in \(K_{d+1}\):
    \[\sigma=\sigma_{0},\sigma_{1},\ldots,\sigma_{k-1},\sigma_{k}=\sigma'\]
    such that \(\sigma_{i-1}\cap\sigma_{i}\in C\) for all \(1\leq i \leq k\). In the special case where \(K\) is surface-embeddable and \(C\) is the collection of internal edges in \(K\), we shall say that \(K\) is \emph{internally 1-connected} if it is \(C\)-connected.

Note that if  \(K\) is a disc then it is internally 1-connected if and only if it has no diagonal.

    We shall say that a surface-embeddable complex \(K\) is \emph{internally connected} if the induced subcomplex attained by restricting \(K\) to its internal vertices is connected.
\end{definition}

\begin{lemma}
    An internally 1-connected surface-embeddable complex is internally connected.
\end{lemma}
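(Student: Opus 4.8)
The plan is to show directly that any two internal vertices of $K$ lie in the same connected component of the induced subcomplex $K[I]$ on the set $I$ of internal vertices of $K$. If $|I|\le 1$ — in particular if $I=\emptyset$, e.g.\ when $K$ is a single $2$-face — there is nothing to prove, so fix distinct $x,y\in I$. The structural fact I would isolate first is: whenever a $2$-face $\sigma\in K_2$ has at least one internal vertex, all internal vertices of $\sigma$ lie in one component of $K[I]$. Indeed, the set of internal vertices of $\sigma$ is a subset of the $2$-simplex $\sigma$, hence a simplex of $K$ all of whose vertices are internal, hence a nonempty simplex of $K[I]$, which is connected. Note also that if $\sigma$ contains an internal edge then, by definition of internal edge, that edge is incident to an internal vertex, so $\sigma$ itself has an internal vertex and the fact applies to it.

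Since $K$ is pure $2$-dimensional, choose $2$-faces $\sigma\ni x$ and $\sigma'\ni y$. If $\sigma=\sigma'$ we are done by the structural fact. Otherwise internal $1$-connectedness provides a chain of $2$-faces $\sigma=\sigma_0,\sigma_1,\dots,\sigma_k=\sigma'$ with $e_i:=\sigma_{i-1}\cap\sigma_i$ an internal edge for every $i$. For each $i$ pick an internal vertex $v_i$ on $e_i$; then $v_i\in\sigma_{i-1}\cap\sigma_i$. Each $\sigma_i$ has an internal vertex — $\sigma_0\ni x$, $\sigma_k\ni y$, and each intermediate $\sigma_i$ contains the internal edge $e_i$ — so by the structural fact the component $C_i$ of $K[I]$ containing the internal vertices of $\sigma_i$ is well defined. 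Now $v_i\in C_{i-1}\cap C_i$ forces $C_{i-1}=C_i$, whence $C_0=\dots=C_k$; since $x\in C_0$ and $y\in C_k$, the vertices $x$ and $y$ lie in the same component of $K[I]$. Thus $K[I]$ is connected, i.e.\ $K$ is internally connected.

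The only delicate point — and where a careless argument would slip — is ensuring that every $2$-face occurring in the chain genuinely contains an internal vertex, so that $C_i$ is meaningful; this is exactly why one must use that an internal edge is, by definition, incident to an internal vertex. Everything else is a routine propagation of a single component along a chain of $2$-faces glued pairwise along internal edges, together with disposing of the vacuous cases $|I|\le 1$.
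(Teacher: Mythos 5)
Your proof is correct and is essentially the same argument as the paper's: both take the chain of $2$-faces provided by internal $1$-connectedness and use that each internal edge contains an internal vertex, so consecutive faces share an internal vertex while the internal vertices within a single face are already joined inside the induced subcomplex on the internal vertices. The only difference is organizational: you propagate one component of that induced subcomplex along the chain in a single pass, whereas the paper phrases the same step as an induction, passing from $u$ to an internal vertex $w$ of $\sigma_{m-1}\cap\sigma_{m}$ at the first index $m$ with $u\notin\sigma_{m}$.
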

\begin{proof}
    Let \(u,v\) be distinct internal vertices in an internally 1-connected surface-embeddable complex  \(K\). Then there exists some sequence \(\sigma_{0},\ldots,\sigma_{n}\) of 2-faces in \(K\) such that each pair of consecutive elements share an edge that is internal, with \(u \in\sigma_{0}\) and \(v \in\sigma_{n}\).
    
    Clearly, if \(v\) lies in the link of \(u\), we are done. Otherwise, let \(1 \leq m\leq n\) be the smallest index where \(u\notin \sigma_{m}\). Then \(\sigma_{m-1}\cap\sigma_{m}\) contains some internal vertex \(w\), and \(\left\{u,w\right\}\subseteq \sigma_{m-1}\) is an edge in \(K\). The proof proceeds by induction, by repeating the above for \(w\) and \(v\), and the shorter sequence \(\sigma_{m},\ldots,\sigma_{n}\).
\end{proof}

\begin{definition}
    Let \(C\) be an internally-1-connected surface-embeddable complex. We call \(C\) a \emph{critical region} if the rows of \(M_{C,4}\) are linearly independent. If \(M_{C,4}\) is a square matrix, we say that \(C\) is \emph{irreducible} (as a critical region), otherwise \(C\) is \emph{reducible}.

    If \(C\) is contained in some triangulated surface \(K\), we say that \(C\) is a critical region of \(K\). A vertex split \(C \rightarrow C'\) such that \(M_{C',4}\) has three more rows than \(M_{C,4}\) is called a \emph{critical split} of \(K\) (or of \(C\)).

    If a triangulated surface \(K\) contains no reducible critical regions, we say that \(K\) is \emph{critically irreducible}.
\end{definition}
\begin{remark}
\label{crit region remarks}
    a) If a critical region \(C\) is a disk with \(n\) internal vertices and $b$ boundary vertices, we compute a lower bound on \(n\): Using the Euler characteristic and the fact that \(C\) has no diagonals, one can compute that \(C\) has \(3n + b -3\) internal edges. Since the rows of \(M_{C,4}\) are independent, there are at least as many columns as rows, which yields the inequality:

    \[4n \geq 3n + b -3.\]

    Simplifying, we get \(n \geq b-3\), with equality if and only if \(C\) is irreducible.
    
    b) A contraction of an internal edge \(C' \rightarrow C\) removes three internal edges as long as it does not create a diagonal. Therefore, the split \(C \rightarrow C'\) is critical if and only if it does not destroy a diagonal.
\end{remark}

We now prove the following crucial proposition, which gives us both a combinatorial characterization of critical regions with small boundary, and the existence of edge contractions in them that make the critical region irreducible; the two are proved in tandem.  
\begin{proposition}
\label{characterization of crit disks}
    (i) For \(3\leq b\leq 6\), an internally 1-connected triangulated disk $D$ with \(b\) boundary vertices is a critical region if and only if it contains no internal vertex adjacent to more than four boundary vertices of $D$, and no pair of adjacent internal vertices each adjacent to four boundary vertices of $D$.
    
    (ii) For a pinched disk obtained by gluing two opposite vertices of a hexagonal triangulated disk $D$, and for a Möbius strip obtained by gluing two opposite edges of $D$, if $D$ is a critical region, then the resulting glued disk is also a critical region.

    (iii) For \(3\leq b\leq 6\), if a disc $D$ with $b$ boundary vertices is a critical region, then there exists a sequence $D=D(0),\ldots, D(t)$ such that for every $1\le i\le t$, the disc $D(i)$ is obtained from $D(i-1)$ by a contraction of an internal edge such that $D(i)$ is also a critical region (call such edge contraction \emph{admissible}) and $D(t)$ is an irreducible critical region.

    The same holds when replacing the disc $D$ by a pinched disks or a Möbius strip as in (ii).
    
\end{proposition}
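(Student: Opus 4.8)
The plan is to prove the three parts together by an interlocking induction, as the proposition suggests. For part (i), I would first establish the ``only if'' direction by a direct rank obstruction: if an internal vertex $v$ is adjacent to five or more boundary vertices, or if two adjacent internal vertices $v,v'$ are each adjacent to four boundary vertices, then I exhibit an explicit nonzero linear dependence among the rows of $M_{D,4}$. The natural candidate for the dependence comes from the determinantal identities underlying $\psi^{1}$: the rows indexed by the internal edges incident to such a $v$ (there are at least five of them, namely $\mathrm{deg}(v)$ minus the number of boundary edges at $v$) live, after the quotient, in too small a space — concretely, the entry $\langle e_{\{v,w\}},\psi_{n,4}^{1}(e_{i,v})\rangle$ is (up to sign) $\det$ of a $1\times 1$ submatrix $x_{w i}$, and the entry for column $e_{i,w}$ is $\pm x_{v i}$, so the rows incident to $v$ restricted to the internal-vertex columns are governed by a matrix of generic variables whose shape forces a dependence once the boundary-adjacency count is too large; this is exactly the Euler-characteristic count in Remark~\ref{crit region remarks}(a) localized to a star. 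The adjacent-pair case is handled the same way after contracting the edge $vv'$ and invoking Proposition~\ref{vert split preserves crit regions} in reverse. The ``if'' direction is the substantive half and is where the induction enters.

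For the ``if'' direction of (i) together with (iii), I would induct on the number of internal vertices $n$ of $D$. The base case is $n = b-3$ (the minimal value from Remark~\ref{crit region remarks}(a)), where one checks directly — using the list of internally $1$-connected discs with few boundary vertices and minimal interior, $3\le b\le 6$, of which there are finitely many — that $M_{D,4}$ is square of full rank (equivalently $D$ is already irreducible, so $t=0$ in (iii)). For the inductive step, given $D$ satisfying the adjacency hypotheses with $n > b-3$ internal vertices, I must find an internal edge $\{u,v\}$ whose contraction $D \to D'$ (i) removes exactly three internal edges, i.e. creates no diagonal — equivalently, $\{u,v\}$ lies in no missing triangle of $D$ with the third vertex a boundary vertex or forming a short cut — and (ii) produces a $D'$ still internally $1$-connected and still satisfying the adjacency hypotheses. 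Then by the inductive hypothesis $D'$ is a critical region with an admissible chain down to an irreducible one, and by Proposition~\ref{vert split preserves crit regions} (applied with $D', D$ in the roles of the proposition's $D', D$, noting the hypothesis there is precisely ``three more internal edges'') linear independence of the rows of $M_{D',4}$ pulls back to linear independence of the rows of $M_{D,4}$, so $D$ is a critical region and prepending $\{u,v\}$ to the chain proves (iii) for $D$. Finding such an edge is the combinatorial heart: I would argue that since $D$ is not irreducible it has ``extra'' internal structure, pick an internal vertex $v$ of smallest degree, and contract one of its internal edges chosen so as not to create a diagonal (possible because $D$ has no diagonals to begin with and $v$'s link is a path or cycle with enough vertices), then verify the adjacency constraints are inherited — a contraction can only decrease boundary-adjacency counts for surviving vertices except at the merged vertex, whose count is at most the union of the two, and I rule out a bad merged vertex using the ``no adjacent pair both adjacent to four boundary vertices'' hypothesis.

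Part (ii) is comparatively soft and I would do it by the same matrix-folding trick used in the proof of Proposition~\ref{vert split preserves crit regions}: gluing two opposite vertices of a hexagonal disc $D$ (resp. identifying two opposite boundary edges to form a Möbius strip) corresponds, at the level of $M_{\bullet,4}$, to identifying two columns (resp. two columns and merging the corresponding pair of edge-rows), which can only leave the row rank unchanged or — crucially — cannot drop it, because the rows of $M_{D,4}$ were already independent and the identification is realized by a surjective linear map on the column space that is generic in the surviving coordinates; more precisely, writing the glued matrix as $M_{D,4}$ post-composed with the column-identification and checking that the previously-distinct rows remain distinct and independent after substituting $x_{i,v'} \mapsto x_{i,v}$, exactly as in the ``replace all $x_{iv}$ with $x_{iu}$'' step there. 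For the Möbius case one extra internal edge appears relative to the hexagon's diagonals being merged, and I confirm the row count still does not exceed the column count so independence is not obstructed. Finally, for the last sentence of (iii) — that the admissible-chain statement holds with $D$ a pinched disc or Möbius strip — I run the same induction on internal vertices, now using part (ii) to get the base-case criticality and the fact that an admissible internal-edge contraction on the glued surface lifts to one on $D$ (the glued vertex/edge never being contracted), so the chain on $D$ descends.

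\textbf{Main obstacle.} I expect the real difficulty to be the inductive step of (i)/(iii): proving that a critical disc which is not yet irreducible \emph{always} admits an admissible internal-edge contraction that preserves both internal $1$-connectedness and the two adjacency conditions. Internal $1$-connectedness is equivalent to having no diagonal, and an edge contraction can inadvertently create a diagonal (a shortcut across the boundary) or create an internal vertex touching five boundary vertices; ruling both out simultaneously for a well-chosen edge, uniformly over $3 \le b \le 6$, is where the argument will need the careful case analysis and possibly a minimal-counterexample / smallest-degree-vertex argument rather than a one-line observation.
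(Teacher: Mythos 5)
Your overall architecture (rank obstruction for the ``only if'' direction, induction on the number of internal vertices using admissible contractions together with the lifting of row-independence from Proposition~\ref{vert split preserves crit regions}, finite check at $n=b-3$) matches the paper's, but the proposal has genuine gaps at exactly the points the paper has to work hardest. First, the inductive step: you correctly identify as the ``main obstacle'' the existence of an admissible contraction preserving internal $1$-connectedness and the adjacency conditions, but you do not actually supply it, and your suggested remedy (smallest-degree vertex, plus ruling out a bad merged vertex via the ``no adjacent pair each with four boundary neighbors'' hypothesis) does not suffice: the problematic merges arise already when two adjacent internal vertices collectively meet five or six boundary vertices (e.g.\ $3+2$ or $3+3$), a configuration not excluded by the hypothesis. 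The paper resolves this not by choosing a better single edge but by a structural reduction --- if a contraction would create a vertex with four boundary neighbors, that vertex subdivides $D$ into smaller regions which are shown to be critical (the argument abstracted in Lemma~\ref{lem:critical subregions give critical region}), and the genuinely bad $5$- and $6$-gon configurations are classified and handled case by case (Figures~\ref{fig:pentalg} and~\ref{fig:hexalg}), terminating either in a four-boundary-neighbor vertex or in the three-internal-vertex hexagon of Figure~\ref{fig:min mobius}. Without this case analysis (or a substitute) parts (i) ``if'' and (iii) are not proved.

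Second, your argument for part (ii) goes the wrong way. Gluing identifies \emph{boundary} vertices, which index no columns of $M_{\bullet,4}$; the effect on the matrix is an identification of generic \emph{entries} (substituting $x_{a_2,i}\mapsto x_{a_1,i}$), i.e.\ a specialization of variables. Specialization can only destroy, never guarantee, row independence (two generically independent rows become equal after identifying their variables), so the claim that the rank ``cannot drop'' because the identification is ``generic in the surviving coordinates'' is unjustified and false as a principle. The paper handles this in the opposite direction: it computes by hand the determinant of the most specialized object, the Möbius strip $M_{S,4}$ of Figure~\ref{fig:min mobius}, and deduces the disk and pinched-disk cases because their matrices are \emph{less} specialized; the reducible case of (ii), and of (iii) for glued objects, then needs the separate inductive argument in the Appendix. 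Relatedly, your final claim that admissible contractions ``descend'' between $D$ and the glued object is also not true: a contraction that is fine in the hexagon can, after gluing, lie in a new missing triangle through the pinch vertex or create a diagonal (the path $a_1uva_2$ phenomenon), which is precisely why the Appendix proof requires its own case analysis and the classification of irreducible Möbius-strip triangulations. (A minor point: for the adjacent-pair obstruction in ``only if'' you do not need to contract $vv'$ and invoke Proposition~\ref{vert split preserves crit regions} ``in reverse'' --- which that proposition does not support --- since the direct count of $4+4+1=9$ rows supported on $8$ columns, as in Lemma~\ref{crit reg necessary condition}, already gives the dependence.)
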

\begin{definition}
    A critical region characterized in Proposition \ref{characterization of crit disks} will be called a \emph{combinatorial} critical region. The \emph{type} of a combinatorial critical region \(C\) is the set of all combinatorial critical regions that have the same geometric realization as \(C\) and the same number of boundary vertices.
\end{definition}

In order to prove \ref{characterization of crit disks}, we will need some preparatory lemmas:

\begin{lemma}
\label{crit reg necessary condition}
    Let \(C\) be surface-embeddable, and \(V\) some collection of internal vertices of \(C\). Let \(B\) be the set of boundary vertices of \(C\), and define the following collection of internal edges:
    \[E_{V}^{B}:=\left\{ e\in C_{1}|\ e\cap V\neq\emptyset,e\subseteq B\cup V\right\} .\]
    If \(C\) is a critical region, then \(\left|E_{V}^{B}\right| \leq 4\cdot\left| V\right|\).
\end{lemma}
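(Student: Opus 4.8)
The plan is to exhibit a submatrix of $M_{C,4}$ whose rows (indexed by $E_V^B$) would have to be linearly dependent if $|E_V^B| > 4|V|$, contradicting the hypothesis that $C$ is a critical region (so that \emph{all} rows of $M_{C,4}$, in particular this subcollection, are linearly independent). The natural candidate is the submatrix $N$ of $M_{C,4}$ obtained by keeping the rows corresponding to the edges in $E_V^B$ and the columns corresponding to the internal vertices in $V$. This $N$ has $|E_V^B|$ rows and exactly $4|V|$ columns (four columns per internal vertex of $V$, since $k=4$). So if $|E_V^B| > 4|V|$, then $N$ cannot have independent rows, and it remains only to check that the rows of $M_{C,4}$ restricted to the edges of $E_V^B$ are supported entirely within the columns indexed by $V$ — i.e., that the rows of $N$ and the corresponding rows of $M_{C,4}$ agree up to deleting identically-zero columns.

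First I would recall, from the matrix description of $\psi^1_{D,k}$ given in the Preliminaries, that the entry of $M_{C,4}$ in the row of an internal edge $e = \{v_0 < v_1\}$ and the column indexed by internal vertex $v$ is $\pm\det$ of a submatrix of the generic matrix $X$ when $v \in e$, and is $0$ when $v \notin e$. Hence the row of $e \in M_{C,4}$ is supported only on the (at most two) columns indexed by the endpoints of $e$ that happen to be internal vertices of $C$. Now for $e \in E_V^B$ we have $e \cap V \neq \emptyset$ and $e \subseteq B \cup V$; the endpoints of $e$ lying in $B$ are boundary vertices, hence not internal, so they index no column of $M_{C,4}$ at all. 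Therefore every internal-vertex endpoint of $e$ lies in $V$, and the row of $e$ in $M_{C,4}$ is supported entirely on the columns indexed by $V$. This is the key observation: restricting to the columns of $V$ loses no information for these particular rows.

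Putting it together: the rows of $M_{C,4}$ indexed by $E_V^B$ are, after deleting the all-zero columns outside $V$, exactly the rows of $N$, which is a matrix with $4|V|$ columns. Linearly independent rows of $M_{C,4}$ restrict to linearly independent rows of $N$ (deleting zero columns does not affect dependence relations). A matrix with $4|V|$ columns has at most $4|V|$ independent rows, so $|E_V^B| \le 4|V|$, as claimed.

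The argument is essentially a dimension count, so there is no serious obstacle; the one point requiring a little care is the bookkeeping in the previous paragraph — making sure that an edge $e \in E_V^B$ really contributes no nonzero entry outside the columns of $V$, which hinges on the definition of $E_V^B$ forcing all non-$V$ endpoints of $e$ into $B$, hence out of the internal-vertex column index set. One should also note the degenerate cases ($V = \emptyset$ forces $E_V^B = \emptyset$, and the bound $0 \le 0$ holds; if $M_{C,4}$ is the $0 \times 0$ matrix the statement is vacuous), but these cause no difficulty.
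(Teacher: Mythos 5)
Your proposal is correct and is essentially the paper's own proof: the paper's one-line argument is exactly that the rows of \(M_{C,4}\) corresponding to \(E_{V}^{B}\) are supported on the columns of \(V\) and are linearly independent, and you have simply filled in the supporting details (the entry of \(M_{C,4}\) vanishes when the vertex is not in the edge, boundary endpoints index no columns, and the column count \(4\left|V\right|\) comes from \(k=4\)).
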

\begin{proof}
    The rows of \(M_{C,4}\) corresponding to the edges in \(E_{V}^{B}\) are supported on the columns corresponding to vertices in \(V\), and are linearly independent.
\end{proof}

The following is a stronger version of~\cite[Lemma 6]{Whiteley_2005}, and the proof is essentially the same:
\begin{lemma}
\label{contractible edge exists in disk}
    Let \(D\) be an internally connected disk with more than one internal vertex. Then there is some contractible edge between two internal vertices in \(D\).
\end{lemma}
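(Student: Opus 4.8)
The plan is to follow Whiteley's strategy of finding a contractible edge by peeling off an ``extremal'' internal vertex. First I would recall that an edge $uv$ with $u,v$ both internal is contractible precisely when $uv$ lies in no missing triangle, i.e.\ when $\mathrm{lk}_D(u)\cap\mathrm{lk}_D(v)=\{x,y\}$ where $xuv$ and $yuv$ are the two $2$-faces on $uv$. So the task is to locate an internal edge that is not a diagonal of any empty triangle inside $D$. Since $D$ is an internally connected disk with at least two internal vertices, the induced subgraph $G$ on the internal vertices is connected with $|V(G)|\ge 2$, hence contains at least one edge; the issue is only to find one such edge that is additionally contractible.

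The main step is an extremality/minimal-counterexample argument. Consider, among all internal edges $uv$, one for which the ``bad'' configuration is as small as possible: suppose for contradiction that every internal edge lies in a missing triangle. Pick an internal edge $uv$ lying in a missing triangle $uvw$ that, among all such (internal edge, missing triangle) pairs, encloses a minimal-area subdisk $D'$ of $D$ bounded by the $3$-cycle $uvw$. Because $uv$ is internal, $w$ together with the interior of this $3$-cycle lies in $D$, and $D'$ is a genuine triangulated subdisk with boundary $uvw$. Now I would argue that $D'$ must contain an internal vertex of $D'$: otherwise $D'$ is a triangulated triangle with no interior vertices and $uvw$ as its only $2$-face, contradicting that $uvw$ is missing. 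Any internal vertex $z$ of $D'$ is also internal in $D$ (its whole link lies inside $D'\subseteq D$), and it has a neighbor inside $D'$; pushing toward the boundary of $D'$, one produces an internal edge of $D$ inside $D'$ which, by minimality of the enclosed area, cannot itself sit in a strictly smaller missing triangle, and a short case analysis (the only missing triangles through such an edge would have to use $u$, $v$ or $w$, but those enclose the same or larger region) forces that edge to be contractible — contradiction.

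The step I expect to be the main obstacle is making precise the notion of ``area enclosed by the missing triangle'' and ensuring the induction genuinely strictly decreases: one has to be careful that the missing triangle $uvw$ bounds a subdisk on a well-defined side (which it does because $uv$ is internal, so locally both sides of $uv$ are in $|D|$, and the cycle $uvw$ is separating in the disk), and that when we descend to an internal edge of $D'$, every missing triangle of $D$ through that edge restricts to a missing triangle living inside $D'$ enclosing a strictly smaller region — this is where the bookkeeping with diagonals versus internal edges matters, and where the hypothesis that $D$ is a \emph{disk} (so that $3$-cycles bound) is used essentially. Once that monotone quantity is set up correctly, the rest is the routine descent, and the base case (a subdisk all of whose interior is empty) directly contradicts the missingness of its boundary triangle.
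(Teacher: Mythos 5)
Your proposal is correct and is essentially the paper's own argument: the paper runs the same descent iteratively (if the chosen internal edge lies in a missing triangle, pass to the strictly smaller triangular subdisk it bounds and pick a new edge incident to its interior), whereas you phrase it as a minimal-counterexample choice of the enclosed subdisk -- these are equivalent. The only blemish is your parenthetical claim that a missing triangle through the new edge must use $u$, $v$ or $w$; it need not, but this does not matter since any such missing triangle bounds a subdisk strictly contained in $D'$, which is all the contradiction requires.
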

\begin{proof}
    There exists some edge \(e_{1}\) between two internal vertices of \(D\) by internal connectedness. For \(i \geq 1\), if  \(e_{i}\) is not part of a missing triangle, we are done, as it is contractible. 
    Otherwise, it borders a smaller triangular disk \(D_{i}\) in \(D_{i-1}\) (where \(D_{0}:=D\)), and there exists some edge \(e_{i+1}\) between two internal vertices of \(D\), such that \(e_{i+1}\) is internal to \(D_{i}\).

    By continuing this process, we get a strictly descending sequence of disks \(D_{0}\supset D_{1}\supset\cdots\) which must terminate at some \(D_{n}\), since \(D\) is finite, which means \(e_{n+1}\) is contractible.
\end{proof}
\begin{proof}[Proof of Proposition \ref{characterization of crit disks}]
    Outline of the proof: in Step 1 we show the ``only if" direction of (i). In Step 2 we show that the ``if" direction of (i), as well as (ii), hold in the irreducible cases. In Step 3, based on the results in Step 2 as a base case, 
    we prove the ``if" part of (i)
    in the reducible cases as well, inductively, in tandem with (iii) for discs. We leave the rest of the proof, namely part (ii) in the reducible case, as well as 
    part (iii) for M\"obius strips and pinched disks
    to the Appendix, as they are not necessary for the torus.

    
    \underline{Step 1:} The ``only if" direction in (i) is straightforward: By Lemma~\ref{crit reg necessary condition}, a critical region cannot contain an internal vertex adjacent to more than four boundary vertices, or a pair of adjacent internal vertices each adjacent to four boundary vertices.

    \underline{Step 2:} We now show the ``if" direction in (i) for the case when \(D\) has exactly \(b-3\) internal vertices; in this case the resulting critical regions would be irreducible, as noted above in Remarks \ref{crit region remarks}.
    In this case, the proof amounts to showing that \(M_{C,4}\) is nonsingular when \(C\) is a triangulated disk, pinched disk or Möbius strip as described.

    When $b=3$, \(C\) is a triangle and the claim holds trivially, as \(M_{C,4}\) has no rows.
    
    For $b=4$, \(C\) is a quadrilateral disk with one internal vertex, which is adjacent to all boundary vertices. In this case, \(M_{C,4}\) is a \(4\times 4\) generic matrix.
    
    \begin{figure}
        \centering
        \includegraphics{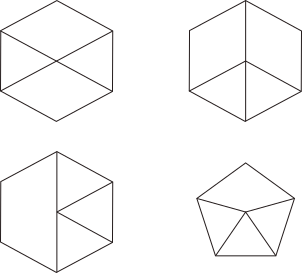}
        \caption{A vertex with four boundary neighbors subdivides a pentagon or a hexagon into smaller critical regions.}
        \label{fig:vertex 4 neighbors}
    \end{figure}
    
    For the other cases ($b=5,6$), first assume that \(v\) is an internal vertex of \(C\) that neighbors four boundary vertices. In this case, in the rows of \(M_{C,4}\) corresponding to the edges between \(v\) and the boundary, there are zeroes in all columns but those corresponding to \(v\), and the \(4\times 4\) submatrix of said rows and columns is generic. Restricting to the remaining rows and columns of \(M_{C,4}\), we get a matrix corresponding to a quadrilateral irreducible critical region when $b=5$, so we are done by the argument above. When \(C\) is hexagonal disk ($b=6$) (or a pinched disk or Möbius strip glued from a hexagonal disk), this submatrix corresponds to either a pentagonal irreducible critical region, or to the union of two quadrilateral irreducible critical regions, as illustrated in Figure \ref{fig:vertex 4 neighbors}, and again, we are done by the argument above.~\footnote{See Lemma~\ref{lem:critical subregions give critical region} for a generalization of this argument.}
 

    \begin{figure}
        \centering
        \includegraphics{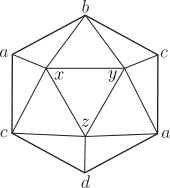}
        \caption{Triangulation of a hexagon with each internal vertex neighboring three boundary vertices, glued to form a Möbius strip \(S\).}
        \label{fig:min mobius}
    \end{figure}
    The only case we have not dealt with is when no such \(v\) exists, and this is only possible in a hexagon with three inner vertices, each of which neighbors exactly three consecutive boundary vertices, as depicted in Figure \ref{fig:min mobius}.
    Indeed, if each internal vertex has at most three neighbors in the boundary then, if $b=5$ we get at most $7=3+3+1$ edges while we need 8 of them to triangulate $D$, and if $b=6$ then we have the 12 interior edges needed to triangulate $D$ if and only if the three internal vertices are pairwise adjacent and each of them neighbors exactly three boundary vertices. Note that the three vertices in the boundary neighboring the same internal vertex must be consecutive, as otherwise the corresponding three edges would create two discs with non-triangle boundary in $D$, thus the other two interior vertices would be one in each of these discs, but then the edge between them will cross one of the aforementioned edges, a contradiction.   
     
    In this last case we calculate the rank of the matrix by hand. In fact, it is enough to do this for the Möbius strip \(S\), since the Möbius strip is attained by gluing a hexagonal disk or a pinched disk to itself, which means that the matrix of the Möbius strip is achieved by identifying some generic entries in the matrix of the disk (or the matrix of the pinched disk). The matrix in question is:
     

\[M_{S,4}=\begin{pmatrix}\overset{x}{\overbrace{\begin{array}{cccc}
y_{1} & y_{2} & y_{3} & y_{4}\\[0.5em]
\underbrace{z_{1}} & z_{2} & z_{3} & z_{4}\\[0.5em]
0 & 0 & 0 & 0\\[0.5em]
-a_{1} & -a_{2} & -a_{3} & -a_{4}\\[0.5em]
-b_{1} & -b_{2} & -b_{3} & -b_{4}\\[0.5em]
-c_{1} & -c_{2} & -c_{3} & -c_{4}\\[0.5em]
0 & 0 & 0 & 0\\[0.5em]
0 & 0 & 0 & 0\\[0.5em]
0 & 0 & 0 & 0\\[0.5em]
0 & 0 & 0 & 0\\[0.5em]
0 & 0 & 0 & 0\\[0.5em]
0 & 0 & 0 & 0
\end{array}}}&\overset{y}{\overbrace{\begin{array}{cccc}
\underbrace{-x_{1}} & -x_{2} & -x_{3} & -x_{4}\\[0.5em]
0 & 0 & 0 & 0\\[0.5em]
z_{1} & z_{2} & z_{3} & z_{4}\\[0.5em]
0 & 0 & 0 & 0\\[0.5em]
0 & 0 & 0 & 0\\[0.5em]
0 & 0 & 0 & 0\\[0.5em]
-b_{1} & -b_{2} & -b_{3} & -b_{4}\\[0.5em]
-c_{1} & -c_{2} & -c_{3} & -c_{4}\\[0.5em]
-a_{1} & -a_{2} & -a_{3} & -a_{4}\\[0.5em]
0 & 0 & 0 & 0\\[0.5em]
0 & 0 & 0 & 0\\[0.5em]
0 & 0 & 0 & 0
\end{array}}}&\overset{z}{\overbrace{\begin{array}{cccc}
0 & 0 & 0 & 0\\[0.5em]
-x_{1} & -x_{2} & -x_{3} & -x_{4}\\[0.5em]
-y_{1} & \underbrace{-y_{2}} & -y_{3} & -y_{4}\\[0.5em]
0 & 0 & 0 & 0\\[0.5em]
0 & 0 & 0 & 0\\[0.5em]
0 & 0 & 0 & 0\\[0.5em]
0 & 0 & 0 & 0\\[0.5em]
0 & 0 & 0 & 0\\[0.5em]
0 & 0 & 0 & 0\\[0.5em]
-a_{1} & -a_{2} & -a_{3} & -a_{4}\\[0.5em]
\underbrace{-d_{1}} & -d_{2} & -d_{3} & -d_{4}\\[0.5em]
-c_{1} & -c_{2} & -c_{3} & -c_{4}
\end{array}}}
  
\end{pmatrix}\begin{aligned}
\begin{matrix}
  \overmat{}{\\[0.37em]
  \partialphantom xy}  \\[0.3em]
  \partialphantom xz  \\[0.3em]
  \partialphantom yz  \\[0.2em]
  \partialphantom ax  \\[0.38em]
  \partialphantom bx  \\[0.385em]
  \partialphantom cx  \\[0.385em]
  \partialphantom by  \\[0.385em]
  \partialphantom cy  \\[0.385em]
  \partialphantom ay  \\[0.385em]
  \partialphantom az  \\[0.385em]
  \partialphantom dz  \\[0.385em]
  \partialphantom cz  \\[0.385em]
  \end{matrix}
\end{aligned}.\]

    In particular, the coefficient of \(d_{1}x_{1}z_{1}y_{2}\) in the determinant of \(M_{S,4}\) is:

    \[\pm\det\begin{pmatrix}-a_{2} & -a_{3} & -a_{4}\\
-b_{2} & -b_{3} & -b_{4}\\
-c_{2} & -c_{3} & -c_{4}\\
 &  &  & -b_{2} & -b_{3} & -b_{4}\\
 &  &  & -c_{2} & -c_{3} & -c_{4}\\
 &  &  & -a_{2} & -a_{3} & -a_{4}\\
 &  &  &  &  &  & -a_{3} & -a_{4}\\
 &  &  &  &  &  & -c_{3} & -c_{4}
\end{pmatrix}\neq0.\]

and therefore \(\det (M_{S,4})\) is nonzero, as required.

    \underline{Step 3:} Now we prove the ``if" part of (i), and (iii), when \(D\) has more than \(b - 3\) internal edges.
    This is done by double induction, on $b=3,4,5,6$ and $n\ge b-3$, the number of internal vertices, where the base case $n=b-3$ was proved in Step 2. 
    
In words,  by \ref{vert split preserves crit regions} and induction on the number of internal 
    vertices, we will show that we can sequentially contract contractible edges in \(|D|\) until we reach an irreducible critical region, such that every intermediate internally 1-connected triangulated disk also contains
no internal vertex adjacent to more than four boundary vertices of $|D|$, and
no pair of adjacent internal vertices each adjacent to four boundary vertices of $|D|$.

    For \(b=3,4\), by Lemma \ref{contractible edge exists in disk} we can always contract to the appropriate irreducible critical disk.
    
    For \(b=5,6\), if a vertex $v$ in \(D\) neighbors four boundary vertices, it subdivides $|D|$ into four smaller discs, each has less than $b$ boundary vertices, and we will reduce the problem to contracting interior edges in these smaller disks (see Figure \ref{fig:vertex 4 neighbors}): as these edges are clearly also interior edges in $D$ it will show (iii), and, by counting, once each small disc becomes an irreducible critical region then so becomes $D$, which will show the ``if" part of (i) by induction. 
    
    Now, for the reduction, we only need to show that these smaller discs are \emph{critical} regions as well. For this we verify that each smaller disc is both (a) internally $1$-connected, namely has no diagonal, and (b) contains non of the two forbidden subgraphs defining a combinatorial critical region. Indeed, for (a) note that a diagonal in one of the smaller discs is either a diagonal in $D$, which is impossible as $D$ is internally $1$-connected, or connects $v$ to the boundary of $D$, making $v$ a neighbor of more than four boundary vertices in $D$, a contradiction. For (b) note that the only possible smaller disc with more than four boundary vertices occurs in the case of Figure \ref{fig:vertex 4 neighbors} bottom left, where there there is a unique smaller disc with five boundary vertices, but if it has an interior vertex $u$ that neighbors all five boundary vertices, in particular $v$, then $D$ contains the forbidden subgraph with internal edge $vu$ and each of $v$ and $u$ neighbors four of the boundary vertices in $D$, a contradiction.

    

    \begin{figure}
        \centering
        \includegraphics{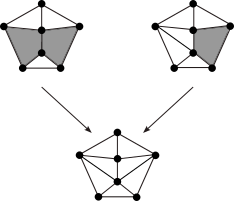}
        \caption{How to contract in a pentagon when two neighboring internal vertices collectively neighbor 5 boundary vertices. An arrow from A to B means that we can contract in the gray areas in A in order to get B. }
        \label{fig:pentalg}
    \end{figure}
    
    \begin{figure}
        \centering
        \includegraphics[scale=0.7]{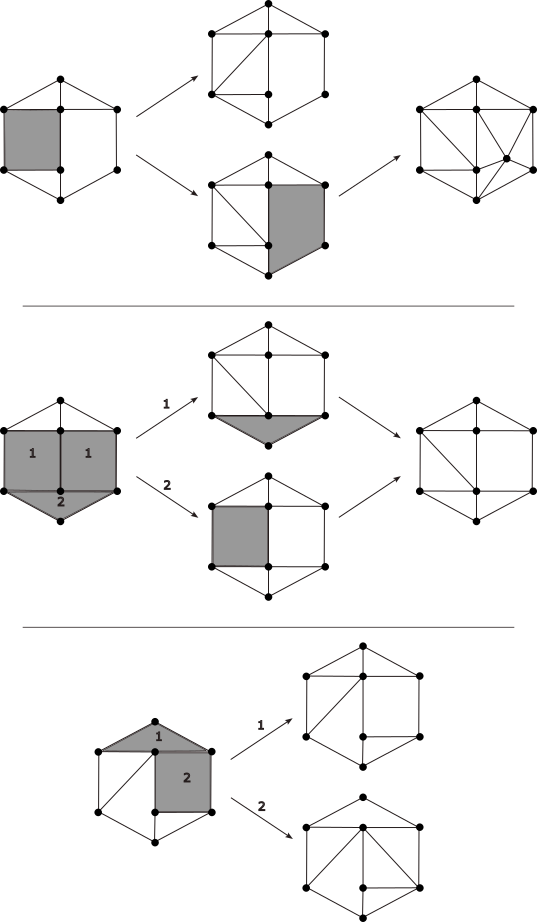}
        \caption{How to contract in a hexagon when two neighboring internal vertices collectively neighbor five or six boundary vertices. Using the 
        notation for the partition of interior vertices $A, B, C$,
        the upper part corresponds to the cases when \(w\) is adjacent to a member of \(A\) and a member of \(B\), the middle part is when \(w\) is adjacent to two members of \(B\) (or when \(C\) is empty), and the bottom part is when \(w\) is adjacent to two members of \(A\).}
        \label{fig:hexalg}
    \end{figure}
    Thus, we may assume now that \(D\) has no internal vertex with four boundary neighbors. Let \(e\) be some contractible edge in \(D\) between two internal vertices $u$ and $v$, which exists by  Lemma \ref{contractible edge exists in disk}. If contracting \(e\) yields a vertex with more than four boundary neighbors, then we are in one of the situations given in Figures \ref{fig:pentalg} and \ref{fig:hexalg}. In these figures, we present all possible configurations, as we shall prove,
    up to symmetries of the boundary 5- or 6-gon, with two neighboring internal vertices \(u\) and \(v\), where \(u\) neighbors three boundary vertices, and \(v\) neighbors two or three additional boundary vertices.
    To see that indeed we exhaust all possibilities, note that we have a partition \(A\cup B\cup C\) of the boundary vertices, where \(A\) contains boundary vertices neighboring \(u\), \(B\) contains boundary vertices neighboring \(v\) but not \(u\), and \(C\) contains the rest of the boundary vertices. Then,
    $3=|A|\ge |B|\ge 2$ and $C$ is either a singleton (when $b=6$ and $|B|=2$) or empty (otherwise). In a hexagon, if \(C\) contains a vertex \(w\), we then have three possibilities for the position of \(w\): it can either neighbor two elements of \(A\), or two elements of \(B\), or one from $A$ and one from $B$. Otherwise \(C\) is empty, and 
    $A$ and $B$ partition the vertex set of the boundary cycle  into two intervals.
    Note that the case
    when $b=6$ and $|B|=3$ appears in Figure~\ref{fig:hexalg} in the middle of the middle row, and all other cases
    appear in Figures \ref{fig:pentalg} and \ref{fig:hexalg} as the discs $D$ admitting an outward going
    arrow. In these figures, the interior upper vertex is $u$ and the interior lower vertex is $v$. 
     
    In each case, the edges containing $v$ or $u$ (or both) subdivide \(D\) into smaller disks, and we can contract admissible interior edges inside these subregions (marked in gray) until (i) either we obtain an internal vertex with four boundary neighbors in $|D|$, 
    a case we have already solved,  
    (ii) or 
    we have three internal vertices each neighboring three boundary vertices, as depicted in the rightmost case in the upper part of Figure \ref{fig:hexalg}, thus any remaining internal edges lie inside triangular critical disks in $D$, and the problem then reduces to contracting interior edges in said triangular disks. See the discs with no outgoing arrows in in Figures \ref{fig:pentalg} and \ref{fig:hexalg} illustrating situations (i) and (ii). This completes the proof of parts (i) and (iii) of the proposition for discs.

    For the proof of part (ii) 
    of the proposition in the reducible case and of part (iii) for $D$ a pinched disc or a M\"obius strip, see the Appendix. This case is only needed for the Klein bottle case in Theorems \ref{alg exists} and \ref{possible shiftings}.
\end{proof}

The following simple lemma generalizes an argument used in the proof of~\ref{characterization of crit disks}, and looks useful beyond our needs here, so we highlight it:
\begin{lemma}
\label{lem:critical subregions give critical region}
Let $D$ be an internally-1-connected surface-embeddable complex, and $C$ a graph consisting of some interior edges in $D$ that subdivides $D$ into subregions $D(1),\ldots,D(k)$. Assume that each $D(i)$ is a critical region, $1\le i\le k$, and that the the rows of $M_{D,4}$ corresponding to edges in $C$ are linearly independent. Then $D$ is a critical region.  
\end{lemma}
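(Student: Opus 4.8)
The statement is a clean generalization of the subdivision argument used repeatedly in Step~2 and Step~3 of the proof of Proposition~\ref{characterization of crit disks}, so the plan is to isolate and formalize that block-triangular argument. First I would fix an ordering of the vertices of $D$ so that the internal vertices of each subregion $D(i)$ are grouped in a convenient way: since $C$ consists of interior edges of $D$ and subdivides $D$ into $D(1),\ldots,D(k)$, every internal vertex of $D$ is either a vertex lying on $C$ (hence a boundary vertex of the subregions it meets) or an internal vertex of exactly one $D(i)$. I would order the columns of $M_{D,4}$ so that the internal vertices belonging solely to $D(1)$ come first, then those belonging solely to $D(2)$, and so on, with the vertices lying on $C$ (which are internal in $D$ but boundary in each $D(i)$) placed last.

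The key observation is then the block structure of $M_{D,4}$ with respect to this ordering. Consider the rows of $M_{D,4}$ indexed by internal edges of $D$. An internal edge of $D$ either lies on $C$, or lies strictly inside a single subregion $D(i)$ — and in the latter case it is an internal edge of that $D(i)$ (it touches an internal vertex of $D(i)$, since a vertex of $D$ internal to $D(i)$ is internal in $D$, and conversely an internal edge of $D$ strictly inside $D(i)$ must be incident to such a vertex). Thus, after this reordering, the submatrix of $M_{D,4}$ given by the rows indexed by internal edges strictly inside $D(i)$ and the columns indexed by internal vertices of $D(i)$ is exactly $M_{D(i),4}$, and these row-blocks are supported only on their own column-block together possibly with the last (shared-$C$-vertex) column-block. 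More precisely, $M_{D,4}$ has the block form in which the ``diagonal'' blocks are the $M_{D(i),4}$ and the only remaining rows are those indexed by edges in $C$.

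To conclude, I would argue that the full collection of rows of $M_{D,4}$ is linearly independent. Suppose a vanishing linear combination of the rows of $M_{D,4}$. Look at the columns corresponding to internal vertices lying strictly inside $D(i)$: only the rows of the $i$-th diagonal block $M_{D(i),4}$ and the rows indexed by edges of $C$ incident to such a vertex are supported there — but edges of $C$ are not incident to any vertex internal to a subregion, so in fact only the rows of $M_{D(i),4}$ are supported on those columns. Since $M_{D(i),4}$ has linearly independent rows by hypothesis, all coefficients of rows in that block must vanish; doing this for every $i$ kills all non-$C$ rows. What remains is a vanishing combination of the rows of $M_{D,4}$ indexed by edges in $C$, whose coefficients must then vanish by the hypothesis that those rows are linearly independent. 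Hence the rows of $M_{D,4}$ are linearly independent, so $D$ is a critical region (it is internally 1-connected by assumption). The only point requiring a little care — and the main obstacle — is the bookkeeping verifying the claim that every internal edge of $D$ is either in $C$ or an internal edge of a unique $D(i)$, and that the vertices on $C$ are never internal to any $D(i)$; both follow directly from the definitions of ``subdivides into subregions'' and of internal edge/vertex, but they must be stated explicitly for the block-triangular decomposition to be valid.
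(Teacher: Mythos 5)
Your argument is correct and is essentially the paper's own proof: you use the same partition of the internal edges and vertices of $D$ into the $C$-block and the blocks coming from the subregions $D(i)$, observe the same block-triangular structure of $M_{D,4}$ (with $M_{D(i),4}$ as diagonal blocks and the $C$-rows supported only on the columns of vertices lying on $C$), and conclude row-independence block by block exactly as in the paper. The bookkeeping point you flag (every internal edge of $D$ is either in $C$ or an internal edge of a unique $D(i)$, and $C$-vertices are internal to no subregion) is treated at the same level of detail in the paper, so nothing further is needed.
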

\begin{proof}
For a subcomplex $X$ of $D$, denote by $E'(X)$ (resp. $V'(X)$)  the edges (resp. vertices) of $X$ interior in $D$.
Then $E'(C), E'(D(1)),\ldots, E'(D(K))$ partition $E'(D)$ and 
$V'(C), V'(D(1)),\ldots, V'(D(K))$ partition $V'(D)$. Write $M_{D,4}$ according to this ordered partition.
Each of the block submatrices of $M_{D,4}$ corresponding to the rows $E'(D(i))$ and columns of $V'(D(i))$ equals $M_{D(i),4}$ and thus has independent rows; same holds for the rows of $E'(C)$ by assumption.
Now, each block submatrix with rows corresponding to $E'(D(i))$ and columns corresponding to $V'(D(j))$ for $i\neq j$ is a zero matrix, and the same holds for the submatrices corresponding to $E'(C)$ and $V'(D(i))$ for $1\le i\le k$.
Thus, $M_{D,4}$ is a lower block diagonal matrix where the rows in each diagonal block are linearly independent, hence the rows of $M_{D,4}$ are linearly independent.
\end{proof}

\section{An Algorithm for Shifting a Torus}
\label{algorithm section}

We start with an important corollary of the commutativity lemma. 
\begin{proposition}
\label{commutativity for non crit splits}
    Assume \(K\) and \(K'\) are two triangulations of the same surface, where \(K'\) is obtained by a sequence of vertex splits from \(K\). Assume further that \(K\) is critically irreducible, and no boundary vertex of a maximal critical regions in \(K\) (w.r.t. inclusion) is internal to another critical region.

    If all critical regions in \(K\) are combinatorial, and all combinatorial critical regions in \(K'\) that share a type with a critical region in \(K\) are irreducible, then there exists a sequence of vertex splits from \(K\) to \(K'\) where the first split is not a critical split.
\end{proposition}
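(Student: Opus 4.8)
The strategy is to run the Commutativity Lemma~\ref{commutativity} in a setting where the ``forbidden path'' encodes a critical split. Suppose, for contradiction, that in every sequence of vertex splits from $K$ to $K'$ the first split is critical. Fix one such sequence $K=K(0)\to K(1)\to\cdots\to K(n)=K'$, and let the first split $K(0)\to K(1)$ be a critical split of some maximal critical region $C\subseteq K$. A critical split, by Remark~\ref{crit region remarks}(b), is a split at a vertex $v$ of $C$ between two vertices $a,b$ that are \emph{not} adjacent in $\mathrm{lk}_v(C)$, and it adds three internal edges to $C$; equivalently it does not destroy a diagonal. I would next translate this into the hypothesis of Lemma~\ref{commutativity}: the pair $(a,v,b)$ is a simple path in $K$ with $avb$ not a $2$-simplex (since $a,b$ are non-adjacent in the link of $v$, hence $ab\notin K$), and this path does \emph{not} survive the whole sequence — because by the hypothesis that $C$'s critical split is \emph{forced}, eventually this extra edge must be ``undone'', i.e. $v$ can no longer be split keeping $a,b$ on a common copy, which is exactly non-survival of $(a,v,b)$. (If instead $(a,v,b)$ did survive the whole way, then $C$ would sit inside $K'$ as a critical region of the same type that is reducible — one more internal vertex than the irreducible version — contradicting the hypothesis that all combinatorial critical regions in $K'$ sharing a type with one in $K$ are irreducible.)

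Given non-survival, Lemma~\ref{commutativity} produces a new sequence from $K$ to $K'$ in which $(a,v,b)$ is destroyed \emph{on the first split}. I then need to argue that ``destroyed on the first split'' forces that first split to \emph{not} be a critical split of $C$, which will be the contradiction. Destroying the path $(a,v,b)$ means splitting $v$ between two vertices $a',b'$ that separate $a$ and $b$ in $\mathrm{lk}_v(K)$; I want to see this split either is entirely inside a region other than $C$, or, if it does take place at a vertex of $C$, it destroys a diagonal of $C$ (so is non-critical for $C$ by Remark~\ref{crit region remarks}(b)), or it introduces a missing triangle making the complex non-prime in a way the hypotheses forbid. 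This is where the structural assumptions come in: since $K$ is critically irreducible and no boundary vertex of a maximal critical region is internal to another critical region, the maximal critical regions of $K$ are ``independent'' — a split at an internal vertex of $C$ that separates two of its link-neighbors must act within $C$, and comparing edge counts (Remark~\ref{crit region remarks}(a): an irreducible critical region with $b$ boundary vertices has exactly $b-3$ internal vertices) forces that such a first split be the one that creates the needed extra internal edge, i.e. be critical — unless it destroys a diagonal, which returns us to the non-critical case. I would push the bookkeeping so that the only consistent conclusion is that the first split of the rearranged sequence is non-critical.

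The main obstacle I anticipate is the second paragraph: cleanly certifying that ``destroyed on the first split'' is incompatible with ``the first split is a critical split of the \emph{same} critical region $C$,'' as opposed to a critical split of some \emph{other} maximal critical region. The hypothesis that no boundary vertex of a maximal critical region is internal to another is exactly designed to prevent interference, but making this airtight requires tracking, across the rearrangement in Lemma~\ref{commutativity}, which vertices of $K'$ descend from internal vs. boundary vertices of each $C$, and verifying that a critical split of a region $C_2\ne C$ cannot be the move that destroys the path $(a,v,b)$ attached to $C$. I would handle this by noting that destroying $(a,v,b)$ is a split \emph{at} $v$, and $v$ is internal to $C$; if $v$ were also a vertex (internal or boundary) of $C_2$, the no-shared-boundary hypothesis plus critical irreducibility pins down that $v$ is internal to both, which forces $C\cup C_2$ (glued along interior structure) to again be a critical region of a controlled type, and one applies Lemma~\ref{lem:critical subregions give critical region} together with the irreducibility-in-$K'$ hypothesis to derive a contradiction. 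Once that case analysis is complete, the contradiction with the assumed forcedness of the critical first split finishes the proof.
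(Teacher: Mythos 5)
Your proof tracks the wrong object, and this creates two genuine gaps. The paper's argument follows the induced graph on the \emph{boundary} vertices of the maximal critical region $C$ ($\partial C$ together with its diagonals): the hypothesis that $K'$ contains no reducible combinatorial critical region of the same type as $C$ forces this boundary graph to be destroyed at some split; Lemma~\ref{commutativity} moves that destruction to the first split; and, decisively, the destroying split then occurs at a \emph{boundary} vertex of $C$, which by hypothesis is not internal to any critical region, so that first split is not critical. Your path $(a,v,b)$, anchored at the interior vertex $v$ being split, cannot play this role. First, its non-survival does not follow from the hypotheses: survival of this single interior path says nothing about whether the boundary structure of $C$ persists into $K'$, so the dichotomy you assert (``either $(a,v,b)$ dies or $C$ sits in $K'$ as a reducible region of the same type'') is false --- for instance, later splits may destroy $\partial C$ (so no region of $C$'s type survives in $K'$) while never splitting $v$ again, in which case $(a,v,b)$ survives and no contradiction is available.

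Second, and more fatally, even granting non-survival, the split that destroys $(a,v,b)$ is a split \emph{at} $v$, an internal vertex of $C$. A split at an internal vertex of a critical region cannot destroy a diagonal (diagonals join boundary vertices), so by Remark~\ref{crit region remarks}(b) it is itself a critical split --- exactly what you are trying to avoid --- and the hypothesis that boundary vertices of maximal critical regions are not internal to other critical regions gives you no leverage here, precisely because your destroyed object is not anchored at a boundary vertex. So the contradiction you aim for in the second paragraph cannot be completed along these lines; the fix is to replace $(a,v,b)$ by the boundary graph of $C$ as in the paper. Two smaller points: a critical split is not characterized by splitting between two vertices non-adjacent in the link (that condition governs whether the reverse contraction creates a missing triangle); and you never invoke Corollary~\ref{prime triangulation survival}, which is needed to dispose of splits inside triangular disks and to ensure $C$ is not a triangle before applying Lemma~\ref{commutativity}.
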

\begin{proof}
    Let us write the sequence of vertex splits:
    
    \[K=K_{0}\rightarrow K_{1}\rightarrow\cdots\rightarrow K_{n-1}\rightarrow K_{n}=K'.\]

    By the hypothesis both \(K\) and \(K'\) are prime, therefore by \ref{prime triangulation survival} we can assume no split occurs inside a triangular disk. If the only critical regions in \(K\) are triangular disks, we are done. 
    
    Otherwise, assume the first split occurs inside some maximal critical region \(C\) in \(K\). As \(C\) is combinatorial, and \(K'\) contains no reducible combinatorial critical region of the same type as \(C\), there must be some minimal \(1<t \leq n\) such that the split from \(K_{t-1}\) destroys the induced graph on the boundary vertices of \(C\) (either destroys \(\partial C\) or destroys a diagonal in \(C\)). Then by \ref{commutativity}, we can replace the sequence with another sequence of vertex splits where said graph is destroyed on the first split, as there is no simple path \((u,v,w)\) in this graph where \(uvw\) is a 2-face, since \(C\) is internally-1-connected (and since \(C\) is not a triangle, as we assumed that \(K_{1}\) is prime). Since the first split of this sequence occurs on a boundary vertex of \(C\), and by the assumption no boundary vertex of \(C\) is internal to another critical region, we are done.
\end{proof}

\begin{theorem}
\label{shifting of a large torus}
    If \(T\) is a triangulated torus with \(n\) vertices, then for \(n \geq 11\), if \(T\) contains no reducible critical disks with six boundary vertices or less, then:
    \begin{equation}
    \label{eq:edges of shifted torus}
        \Delta\left(T\right)_{1}=\left\{ \sigma\in{\left[n\right] \choose 2}|\ \sigma\leq_{\mathrm{lex}}\left(4,10\right)\right\}.
    \end{equation}
    Similarly, for \(n\geq 8\), if \(T\) is prime, then:
    \begin{equation}
    \label{eq: two-faces of shifted torus}
        \Delta\left(T\right)_{2}=\left\{ \sigma\in{\left[n\right] \choose 3}|\ \sigma\leq_{\mathrm{lex}}\left(1,4,8\right)\right\} \cup\{\left( 2,3,4\right)\}.
    \end{equation}
\end{theorem}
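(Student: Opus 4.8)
The plan is to reduce the statement about an arbitrary large torus triangulation to the finitely many ``small'' cases, using the contraction/critical-region machinery built up in the previous sections, and then to invoke the recorded shiftings of those small cases together with the monotonicity Proposition~\ref{vert split preserves tails} and the equality Proposition~\ref{vert split preserves crit regions} to pin down the two faces- and edges-sets exactly.

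\textbf{Step 1: the $2$-dimensional part.} Starting from a prime torus $T$ with $n\ge 8$ vertices, I would repeatedly contract contractible edges (which exist as long as $T$ is not irreducible) while staying prime --- using Corollary~\ref{prime triangulation survival} to reorder splits so intermediate triangulations are prime --- until reaching an irreducible triangulation of the torus. There are finitely many such (Lavrenchenko's list \cite{LavrenchenkoTorus}), and for each of them, plus the finitely many triangulations obtained from them by a bounded number of prime vertex splits needed to reach $8$ vertices, one computes $\Delta(\cdot)_2$ directly and checks that \eqref{eq: two-faces of shifted torus} holds. Now Proposition~\ref{vert split preserves tails} says the $\mathrm{lex}$-tails of $\Delta(\cdot)_2$ only shrink under vertex splits, so for every prime $T$ with $n\ge 8$ the set $\Delta(T)_2$ is $\leq_{\mathrm{lex}}$-dominated by $\{(1,4,8)\}\cup\{(2,3,4)\}$ in the sense that its tail past $(1,4,8)$-ish thresholds is no larger; combined with Theorem~\ref{volume rigidity of torus kb and rp2} giving $(1,3,n)\in\Delta(T)$, and the fact that $b_2(T)=1$ forces exactly one ``gap'' face (via Theorem~\ref{Shifting preserves Betti numbers}), one gets equality. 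One must also observe $f_2(T)=2n$ and that a shifted complex with the prescribed $f$-vector and $b_1=2$, $b_2=1$ is forced to have exactly this $\Delta(\cdot)_2$; I would phrase this as: the right-hand side of \eqref{eq: two-faces of shifted torus} is the lex-smallest shifted set of $2n$ triangles containing $(2,3,4)$, and the tail bounds plus Betti constraints squeeze $\Delta(T)_2$ onto it.

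\textbf{Step 2: the $1$-dimensional part.} For $n\ge 11$ with $T$ containing no reducible critical disc with $\le 6$ boundary vertices --- i.e.\ $T$ is ``critically irreducible'' in the relevant range --- I would contract edges to a small torus while controlling critical regions: by Proposition~\ref{characterization of crit disks}(iii), inside each maximal (combinatorial) critical region one can perform admissible contractions keeping it a critical region, and by Proposition~\ref{commutativity for non crit splits} (whose hypotheses I would verify hold here: $T$ is prime hence critically irreducible, and I would check no boundary vertex of a maximal critical region is internal to another, which for critical discs of boundary size $\le 6$ follows from the degree bounds in Remark~\ref{crit region remarks}) one can arrange the contraction sequence so that critical splits are handled first/cleanly. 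The upshot via Proposition~\ref{vert split preserves crit regions} is that each such contraction preserves $\mathrm{Tail}_{\mathrm{lex}}(\Delta(\cdot),\{k+1,k+2\})$ \emph{as a set} for all $k\ge 3$, so $\Delta(\cdot)_1$ past $(4,5)$ is literally unchanged down to the base cases. For the finitely many base cases (irreducible tori and their bounded vertex-splits up to $11$ vertices that are critically irreducible) one checks \eqref{eq:edges of shifted torus} by direct computation; then the preservation propagates it back up to $T$, with $f_1(T)=3n$ and $(3,n)\in\Delta(T)_1$ (implied by $(1,3,n)\in\Delta(T)$) and $b_1(T)=2$ forcing the exact cutoff $(4,10)$.

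\textbf{Main obstacle.} The delicate part is Step~2: ensuring the contraction sequence from $T$ down to a base case can be chosen so that \emph{every} intermediate triangulation is again critically irreducible (so that the hypothesis keeps applying) and so that each contraction either lives inside a critical region --- where Proposition~\ref{vert split preserves crit regions} applies with the linear-independence hypothesis on $M_{D,k}$ supplied by Proposition~\ref{characterization of crit disks} --- or is a non-critical split handled by Proposition~\ref{commutativity for non crit splits}. Reducible critical discs appearing mid-sequence would break the set-equality of tails, so the real content is an induction showing they do not: if a contraction would create a reducible critical region of boundary size $\le 6$, the forbidden-subgraph characterization of Proposition~\ref{characterization of crit disks}(i) lets one instead route the contraction through an admissible edge inside that region, reducing it toward irreducibility first. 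Verifying the finite list of base-case computations is routine but must be organized carefully around the irreducible-triangulation lists and the bound $p$ on the number of vertex splits needed.
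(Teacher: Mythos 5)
Your Step 1 (the $2$-dimensional statement) is essentially the paper's argument and is fine: reduce via contractions (kept prime by Corollary~\ref{prime triangulation survival}) to the finitely many computed cases on at least $8$ vertices, then use Proposition~\ref{vert split preserves tails}, Theorem~\ref{volume rigidity of torus kb and rp2}, Theorem~\ref{Shifting preserves Betti numbers} and face counting. The problem is Step 2. Your central mechanism there --- contracting inside critical regions so that, by Proposition~\ref{vert split preserves crit regions}, the lex-tail of $\Delta(\cdot)_1$ is preserved \emph{as a set} ``down to the base cases'' --- is unavailable precisely under the hypothesis of the theorem: $T$ is assumed to have \emph{no} reducible critical disks with at most six boundary vertices, so there are no admissible, tail-preserving contractions of the kind Proposition~\ref{vert split preserves crit regions} covers (contracting inside an \emph{irreducible} critical region destroys the row-independence hypothesis). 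To get from $n>11$ down to $11$ vertices you are forced to contract edges lying outside all such critical regions, and for those contractions the tail can strictly grow; all you have is the one-sided inequality of Proposition~\ref{vert split preserves tails}, which helps only if you can guarantee that the $11$-vertex triangulation you land on actually satisfies~(\ref{eq:edges of shifted torus}). Guaranteeing this is the entire content of the proof, and your proposal does not supply it: the paper does so by applying Proposition~\ref{commutativity for non crit splits} with $K$ equal to the \emph{small} triangulation at the bottom of the (reversed) sequence and $K'=T$, using computer-verified structural facts about every critically irreducible torus triangulation on at most $10$ vertices violating~(\ref{eq:edges of shifted torus}) --- namely that all its critical regions are combinatorial disks with at most six boundary vertices, that no boundary vertex of a maximal critical region is internal to another, and an inductive statement about non-critical splits --- to rearrange the split sequence so that an $11$-vertex intermediate triangulation satisfies~(\ref{eq:edges of shifted torus}); then Proposition~\ref{vert split preserves tails} propagates the empty tail up to $T$, and rigidity plus counting finishes.

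Two further points in your Step 2 are incorrect as stated. First, you check the hypotheses of Proposition~\ref{commutativity for non crit splits} on $T$ (``$T$ is prime hence critically irreducible''), but primality does not imply critical irreducibility, and in any case the hypotheses must hold for the small triangulation from which the splits start, not for $T$; the theorem's assumption on $T$ is what supplies the \emph{other} hypothesis, that combinatorial critical regions of $T$ sharing a type with critical regions of the small triangulation are irreducible. Second, the condition that no boundary vertex of a maximal critical region is internal to another critical region does not follow from the counting bound in Remarks~\ref{crit region remarks}; in the paper it is obtained only by explicit computation for the finitely many relevant small triangulations. Your ``main obstacle'' paragraph correctly identifies where the difficulty lies, but the proposed remedy (rerouting a bad contraction through an admissible edge of the reducible region it would create, via Proposition~\ref{characterization of crit disks}(i)) does not resolve it: the offending contraction itself may already enlarge the tail, and no argument is given that the rearranged sequence reaches an $11$-vertex triangulation with empty tail. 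So the proof as proposed has a genuine gap exactly at the step the paper closes with Proposition~\ref{commutativity for non crit splits} and the verified properties of the small exceptional cases.
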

\begin{proof}
    


Assume first that $n=11$ for $T$ as in the theorem.
    Using a computer, we calculated that 
    in this case (\ref{eq:edges of shifted torus}) holds for $T$. 
    Furthermore, we verified by computer calculation that for \emph{every} critically irreducible triangulation $T'$ of the torus on at most 10 vertices such that (\ref{eq:edges of shifted torus}) fails for $T'$, the following three properties hold: (a) all critical regions in \(T'\) are disks with at most six boundary vertices, (b) no boundary vertex of a maximal critical regions in \(T'\) (w.r.t. inclusion) is internal to another critical region in $T'$
    and (c) if $T''$ is obtained from $T'$ by a non-critical split and (\ref{eq:edges of shifted torus}) fails for $T''$ then (b) holds for $T''$ as well.~\footnote{Note that (c) is implied by (b) and the calculation mentioned for $n=11$. We phrased (c) for stressing the inductive argument via vertex splits.}

    
    Now assume that \(T\) has \(n > 11\) vertices, and contains no reducible critical disks having six or fewer boundary vertices. There is some sequence of edge contractions \(T\rightarrow\cdots\rightarrow T_{0}\) with \(T_{0}\) an irreducible triangulation.

    By the calculations we mentioned, including properties (a,b,c) above, we can repeatedly apply \ref{commutativity for non crit splits} to modify the intermediate triangulations in the above sequence of edge contractions, and (reversing the sequence) obtain vertex splits  \(T_0\rightarrow\cdots T'\rightarrow\cdots \rightarrow T\) where
    \(T'\) has \(11\) vertices and (\ref{eq:edges of shifted torus}) holds for \(T'\).

    Now, \(\mathrm{Tail}_{\mathrm{lex}}\left(\Delta\left(T'\right),\left\{ 5,6\right\} \right)=\emptyset\), and therefore by \ref{vert split preserves tails}: \[\mathrm{Tail}_{\mathrm{lex}}\left(\Delta\left(T\right),\left( 5,6\right)\right)=\emptyset.\] We claim that this completely determines the edges of \(\Delta\left(T\right)\). Indeed, by \ref{volume rigidity of torus kb and rp2} we have \(\left(3,n\right)\in\Delta\left(T\right)\), and by shiftedness we get \(\sigma\in\Delta\left(T\right)\) for all \(\sigma\leq_{\mathrm{lex}}\left(3,n\right)\). This gives us \(3n - 6\) edges we know are in \(\Delta\left(T\right)\), leaving us with six more edges to find.\footnote{By the Euler characteristic \(T\) has \(3n\) edges and \(2n\) 2-faces.} As these edges must be chosen from \(\left\{ \left(4,k\right)|\ 4<k\leq n\right\} \), by shiftedness we get (\ref{eq:edges of shifted torus}).

    For (\ref{eq: two-faces of shifted torus}) first note that \(\left(2,3,4\right)\in\Delta\left(T\right)\) to account for nontrivial second homology (by \ref{Shifting preserves Betti numbers}). We calculated that (\ref{eq: two-faces of shifted torus}) holds for all irreducible triangulations with eight or more vertices, as well as for all 
    reducible prime triangulations with eight vertices (there are precisely two of the latter, see \cite{LavrenchenkoTorus}). We then get the desired result using a similar argument as before: if \(T\) is reducible and has more than eight vertices, then  
     there is a sequence of edge contractions to either an irreducible triangulation on at least $8$ vertices, or
     to a prime triangulation on $8$ vertices.
     Then in particular \(T\) contracts to one of the above cases in which we calculated that (\ref{eq: two-faces of shifted torus}) holds, and therefore (\ref{eq: two-faces of shifted torus}) must hold for \(T\) as well by \ref{vert split preserves tails}.    
\end{proof}

\begin{remark}
\begin{itemize}
    \item There is only one triangulation of a torus with less than eight vertices, the unique irreducible triangulation with seven vertices.
    \item The calculations were not done deterministically, but rather by using a randomized matrix (in place of a generic one). Since doing so can only create linear dependencies that would not be there had we used a generic matrix, this means that if the calculated shifting for some complex gives a lexicographic prefix, then it must be correct. As stated in the proof, under the conditions of Theorem~\ref{shifting of a large torus} we calculated that for a sufficiently large triangulation (\(\geq 11\) vertices for shifting edges, \(\geq 8\) vertices for shifting 2-faces) the shifting is a lexicographic prefix\footnote{For the 2-faces we get a prefix discounting the face \((2,3,4)\), which we know is in the shifting by \ref{Shifting preserves Betti numbers}.}, therefore the calculation is sufficient for our needs.
    
    In fact, the calculations gave lexicographic prefixes for almost all the cases we calculated, and in particular for all irreducible triangulations. Moreover, the few cases where we did not get a prefix (there are less than 40 of these) 
    were cases with 10 vertices, in which the edge \((5,6)\) was found in the shifting (instead of \((4,10)\)). Therefore, in these cases there are only two possible shiftings, those being \(\mathcal{T}_3\) and \(\mathcal{T}_4\) of Theorem \ref{possible shiftings}, and both are already known to be a shifting of some other triangulation.
\end{itemize}
    
\end{remark}

\begin{proof}[Proof of Theorem \ref{possible shiftings} (i)]

    Suppose \(T\) is some triangulation of the torus with \(n\) vertices. Note that if \(\mathcal{T}_{i} \subseteq \Delta(T)\) for some \(1\leq i \leq 4\), then by counting faces (\(T\) has \(2n\) 2-faces and \(3n\) edges by the euler characteristic, and therefore so does \(\Delta(T)\)), \(\mathcal{T}_{i}\) must equal the collection of maximal faces of $\Delta(T)$ w.r.t. \(\leq_{p}\). Thus we simply need to show that \(\mathcal{T}_{i} \subseteq \Delta(T)\) for some \(1\leq i \leq 4\).
    
    If \(T\) is not prime, then we can contract inside triangular critical disks until reaching a prime triangulation \(T'\), and then (by \ref{contraction in triangle preserves 2nd dimension} and \ref{vert split preserves crit regions}):
    \begin{equation}
        \label{eqn: t edges}\Delta\left(T\right)_{1}=\Delta\left(T'\right)_{1}\cup\left\{ \sigma\in{\left[n\right] \choose 2}|\ \sigma\leq_{\mathrm{lex}}\left(3,n\right)\right\}\tag{$\star$},
    \end{equation}
    \[ \]
    \[\Delta\left(T\right)_{2}=\Delta\left(T'\right)_{2}\cup\left\{ \sigma\in{\left[n\right] \choose 3}|\ \sigma\leq_{\mathrm{lex}}\left(1,3,n\right)\right\} .\]
    Therefore it is enough to show the claim for \(T\) prime. Then, if \(n=7\), then \(T\) is irreducible, and calculation gives us \(\Delta\left(T\right)\supset\mathcal{T}_{1}\). Otherwise (if \(n\geq 8\)), by \ref{shifting of a large torus} we have \(\left(1,5,6\right)\notin\Delta\left(T\right)\) and thus it is enough to determine the edges of the shifting.
    
    As before, we reduce to the case where \(T\) is critically irreducible and \(\left(1,5,6\right)\notin\Delta\left(T\right)\), as otherwise we can contract edges in critical regions until reaching a critically irreducible triangulation \(T'\), and then by \ref{vert split preserves crit regions}, (\ref{eqn: t edges}) holds for such $T$ and $T'$. Note in particular that if \(\left(1,5,6\right)\notin\Delta\left(T\right)\), then \(\left(6,7\right)\notin\Delta\left(T\right)\) as well, otherwise we would have three edges \(\left(5,6\right),\left(5,7\right),\left(6,7\right)\) contributing to the 1st homology by \ref{Shifting preserves Betti numbers}, contradicting the fact that the 1st Betti number of the torus is \(2\). Thus under these assumptions \(n \geq 8\).
    
    Now, either \(n\geq 11\) and then by \ref{shifting of a large torus} we get \(\Delta\left(T\right)\supset\mathcal{T}_{4}\), or else \(8\leq n \leq 10\) and we are in one of the cases we calculated, and in these cases \(\Delta\left(T\right)\) contains one of \(\mathcal{T}_{2},\mathcal{T}_{3},\mathcal{T}_{4}\).
\end{proof}

\begin{proof}[Proof of Theorem \ref{alg exists} for the torus]
    Let \(T\) be a triangulated torus with \(n\) vertices. We first present the algorithm:\newline

    \underline{Step 0} Set \(T' := T\).

    \underline{Step 1} Find a reducible critical disk \(D\) in \(T'\) with at most six boundary vertices, of smallest boundary.

    \underline{Step 2} Contract edges in \(D\) until it becomes an irreducible critical disk (this is possible by 
     \ref{characterization of crit disks}(iii)), 
    and replace \(T'\) with the contracted triangulation.

    \underline{Step 3} Repeat steps 1-2 until no reducible disks are found in step 1 (thereby reaching a triangulation \(T'\) of the torus such that \(T'\) contains no reducible critical disk with six or less boundary vertices).

    \underline{Step 4} Calculate \(\Delta\left(T'\right)_{1}\) (using Theorem \ref{shifting of a large torus}).

    \underline{Step 5} Calculate \(\Delta\left(T\right)_{1}\) based on \(\Delta\left(T'\right)_{1}\).

    \underline{Step 6} Calculate \(\Delta\left(T\right)_{2}\) based on the first prime triangulation reached in the process of contracting edges in steps 1-2.
    \newline

    First, we explain steps 4-6. In step 4, if \(T'\) has at least 11 vertices, then Theorem \ref{shifting of a large torus} gives us \(\Delta\left(T'\right)_{1}=\left\{ \sigma\in{\left[n\right] \choose 2}|\ \sigma\leq_{\mathrm{lex}}\left(4,10\right)\right\}\) in \(O(n)\) time, for writing the at most $O(n)$ edges. Otherwise, it takes \(O(1)\) to calculate \(\Delta\left(T'\right)_{1}\), as $T'$ has bounded size.

    For step 5, we know that \(\left(3,n\right)\in\Delta\left(T\right)\) by $3$-hyperconnectivity, or~\ref{volume rigidity of torus kb and rp2}, and by \ref{vert split preserves crit regions} we have:
        \[\mathrm{Tail}_{\mathrm{lex}}\left(\Delta\left(T\right),\left( 5,6\right) \right)=\mathrm{Tail}_{\mathrm{lex}}\left(\Delta\left(T'\right),\left( 5,6\right) \right).\]
    This completely determines \(\Delta\left(T\right)_{1}\) (in \(O(n)\)); denote \[\ell:=10-\left|\mathrm{Tail}_{\mathrm{lex}}\left(\Delta\left(T\right),\left(5,6\right)\right)\right|.\]
    Then:
    \[\Delta\left(T\right)_{1}=\left\{ \sigma\in{\left[n\right] \choose 2}|\ \sigma\leq_{\mathrm{lex}}\left(4,\ell\right)\right\} \cup\mathrm{Tail}_{\mathrm{lex}}\left(\Delta\left(T\right),\left(5,6\right)\right).\]

    Step 6 is similar to step 5: let \(T_{p}\) be the triangulation reached after repeating steps 1-2 only for triangular critical disks, meaning \(T_{p}\) is the first prime triangulation reached from \(T\). Then by Theorem \ref{shifting of a large torus}, \(\Delta\left(T_{p}\right)_{2}\) is determined by the number of vertices in \(T_{p}\), and this determines \(\Delta\left(T\right)_{2}\) by \ref{contraction in triangle preserves 2nd dimension} (in $O(n)$ time):
    \[\Delta\left(T\right)_{2}\backslash\left(2,3,4\right)=\begin{cases}
\left\{ \sigma\in{\left[n\right] \choose 3}|\ \sigma\leq_{\mathrm{lex}}\left(1,4,8\right)\right\}  & \left|\left(T_{p}\right)_{0}\right|\geq8\\
\left\{ \sigma\in{\left[n\right] \choose 3}|\ \sigma\leq_{\mathrm{lex}}\left(1,4,7\right)\right\} \cup\left\{ \left(1,5,6\right)\right\}  & else
\end{cases}.\]

    This shows that the proposed algorithm determines \(\Delta\left(T\right)\) as claimed. All that is left to show is that steps 1-3 can be done in polynomial time.

    First, we present an algorithm for finding a reducible critical disk (step 1):\newline

    \underline{Step 1.A.}  For \(b\in \{3,4,5,6\}\), starting from $b=3$, do the following:
    
    For each simple loop \(L\) of length \(b\) in \(T'\) do steps i-iv, stopping when a reducible critical disk is found:

    \underline{Step i}. Find a connected component of \(\left|T'\right|\backslash\left|L\right|\), and calculate while doing so the Euler characteristic and the number of vertices of the component's closure, namely of the induced subcomplex of $T'$ on $L$ union the interior vertices on said component.
    
    \underline{Step ii}. Determine if \(L\) separates \(T'\), namely if the closure of the connected component from step i is not all of \(T'\), and if it does separate let \(D\) be the component whose closure's (non-reduced) Euler characteristic is \(1\); making \(D\) a disk.\footnote{The sum of Euler characteristics of connected components' closures equals the Euler characteristic of \(T\), which is 0. Therefore determining the Euler characteristic of one component's closure determines the other as well.}

    \underline{Step iii}. Determine if \(D\) is a combinatorial critical disk by checking if it has any diagonals, and checking if it has an internal vertex neighboring at least five boundary vertices, or has two neighboring internal vertices each neighboring four boundary vertices (see Proposition \ref{characterization of crit disks}).

    \underline{Step iv}. If \(D\) is a combinatorial critical disk with more than \(b-3\) internal vertices, we stop, as \(D\) is a reducible critical disk. Else, we repeat 1.A. for the next loop $L$.\newline

    Steps i-iv take \(O(n)\) time, and there are \(O(n^{b})\) simple loops of length \(b\) in \(T'\), meaning step 1.A. takes \(O(n^{b+1})\). Therefore, step \(1\) will take \(O(n^{7})\) altogether.

    In step 2, it takes \(O(n)\) time to find a contractible edge, as there are \(O(n)\) internal edges in \(D\) and checking if an edge is admissibly contractible according to  \ref{characterization of crit disks}(iii)  takes \(O(b)=O(1)\) time. Now, there are \(O(n)\) contractions to be done, meaning that step 2 takes \(O(n^{2})\). Therefore, steps 1-2 together take \(O(n^{7})\) and are repeated \(O(n)\) times in step 3, and therefore steps 1-3  take \(O(n^{8})\) altogether.

    We conclude that the algorithm finishes in \(O(n^{8})\) time, and we are done.
\end{proof}

\section{Concluding remarks}
Here are some directions for further research related to the results presented in  this paper.

\textbf{Higher genus}. An obvious direction is to extend to surfaces of higher genus our main results Theorems~\ref{alg exists} and~\ref{possible shiftings}, on polytime algorithms to compute, and on characterization of, the exterior shifting of surface triangulations. Showing volume rigidity here, namely \cite[Conj.5.1]{bulavka2023volume}, would be an important step. Another step is given in our Conjecture~\ref{conjecture on edges in shifting}. The algorithmic problem when the genus is unbounded might turn as hard as the general symbolic determinant identity testing problem (SDIT).
Note that it would be quite difficult to take a similar approach to what we did here even for the next highest genus, as one would have to check thousands of cases for the irreducible triangulations alone (\cite{sulanke2006generating}), let alone calculating sequences of vertex splits.

\textbf{Positive characteristic}. A second direction is to find analogues of our results for exterior shifting over nonzero characteristic.  Let us mention that for a fixed finite field, the SDIT problem is coNP-complete~\cite{BFS99}. 

\textbf{Symmetric shifting.}
A third direction is to find analogues of our results  for the symmetric shifting. 
A positive answer to Kalai's~\cite[Problem 3]{KalaiShifting}, together with the volume rigidity results~\cite[Thm.1.2, Cor.1.3]{bulavka2023volume}, would imply that the triangle $\{1,3,n\}$ belongs to the symmetric shifting of every $n$-vertex triangulation of either the torus, projective plane, or the Klein bottle.

\bibliography{refs}
\bibliographystyle{plain}

\appendix
\section{The Klein Bottle and \(\mathbb{RP}^{2}\)}
\label{appendix section}

\begin{proof}[Proof of Proposition \ref{characterization of crit disks} (ii), reducible case] We want to show that we can find an edge to contract that will not create a diagonal, or a vertex with five boundary neighbors\footnote{This is only possible for a pinched disk, as the Möbius strip has only four boundary vertices.}. In both cases we assume there is no internal vertex with four boundary neighbors, otherwise the problem reduces to contracting edges in a smaller critical disk, as in the proof of \ref{characterization of crit disks} (i) (see Figure \ref{fig:vertex 4 neighbors}).

    \underline{Möbius strip:} Let \(M\) be a triangulation of a Möbius strip obtained by gluing a hexagonal reducible critical disk to itself along an edge. This means that \(M\) has four boundary vertices, and at least four internal vertices (as here $b-3=3$), and a diagonal \(ab\) (for the glued edges on the haxagonal boundary of the disc). By \cite{MobiusLawrencenko}, this means that \(M\) must have a contractible internal edge \(\epsilon\), as all irreducible triangulations of the Möbius strip that have a diagonal have at least four boundary vertices and at most two internal vertices.

    If we can contract \(\epsilon\) without creating a diagonal, then we are done by \ref{vert split preserves crit regions} and induction on the number of internal vertices. If not, this means that there is an internal vertex \(u\) and edges \(cu,du\), where \(c,d\) are the other two boundary vertices (along with \(a,b\)), and \(\epsilon \in \{cu,du\}\); indeed, the diagonal created must connect $c$ with $d$. By internal connectedness, there is an internal vertex \(v\) and an edge \(uv\). If \(auv\) and \(buv\) are not missing triangles in \(M\), or if one of them is a missing triangle enclosing a disk, we are done, as either \(uv\) is contractible, or it encloses a triangular disk in which we can find a contractible edge, and these will not create a diagonal.

    Otherwise, without loss of generality \(auv\) is a missing triangle that does not enclose a disk, and therefore \(u\) does not neighbor \(b\) (otherwise it would neighbor all four boundary vertices, contradicting our assumption).
Now, either \(abcu\) or \(abdu\) is a loop in \(M\) that encloses a disk \(D\), and in \(D\) there must be some internal vertex \(w\) that neighbors \(b\) (since \(bu\),\(ac\) and \(ad\) are not edges in the disk \(D\)). Thus, either \(bw\) is contractible in \(M\), or it is part of a missing triangle that encloses a subdisk in \(D\), inside which we can find an edge to contract. See Figure \ref{fig:mobius alg proof} which illustrates this.

    \begin{figure}
        \centering
        \includegraphics{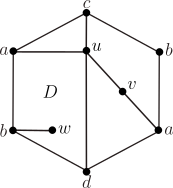}
        \caption{If \(uv\) cannot be contracted, we can find a contractible edge inside the disk \(D\)}
        \label{fig:mobius alg proof}
    \end{figure}

    \underline{Pinched disk:} Let \(P\) be a triangulation of a pinched disk obtained from a hexagonal reducible critical disk \(\widetilde{P}\) by identifying two opposite vertices \(a_{1},a_{2}\in \widetilde{P}\), and denote by \(a\in \partial P\) the corresponding vertex of \(P\). Then \(\partial \widetilde{P}\) is a loop \(a_{1}bca_{2}de\). As \(\widetilde{P}\) is reducible, both \(\widetilde{P}\) and \(P\) have at least four internal vertices.

    By the proof of \ref{characterization of crit disks} (i), there is some internal edge \(\epsilon\in \widetilde{P}\) that can be contracted in \(\widetilde{P}\) to obtain a critical disk with one less internal vertex. If this edge can be contracted in \(P\), we are done. Otherwise, \(\epsilon\) is part of a path \(a_{1}uva_{2}\) in \(\widetilde{P}\), with \(u,v\) internal. By internal connectedness, there is an edge \(uw\) (without loss of generality) with \(w\neq v\) internal. 
    \textbf{Case 1: \(uw\) is not contained in a missing triangle in \(P\)}. 
    If \(u,w\) do not collectively neighbor five boundary vertices, then we can contract \(uw\). If they do neighbor five boundary vertices, then we are in the leftmost case of the upper part of Figure \ref{fig:hexalg}, and we proceed as specified there.
    \textbf{Case 2: \(uw\) is part of a missing triangle in \(P\)}. If it is part of a missing triangle in \(\widetilde{P}\), then this missing triangle encloses a disk, and we can find a contractible edge inside this disk as per the proof of \ref{contractible edge exists in disk}. Otherwise, the missing triangle is \(auw\), and the corresponding path in \(\widetilde{P}\) is \(a_{1}uwa_{2}\).

    In this last case, we turn our attention to the disk \(D\) in \(P\) bounded by the loop \(awuv\). Since \(ua_{2}\) is not an edge in \(\widetilde{P}\), there is some edge \(\varepsilon = wx\) in \(D\) but not in \(\partial D\) (see Figure \ref{fig:pinched disk}). 

    \begin{figure}
        \centering
        \includegraphics{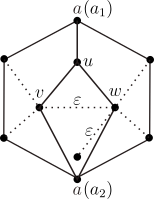}
        \caption{If \(\varepsilon = vw\) and \(v,w\) collectively neighbor 5 boundary vertices, we are in one of the cases of Figure \ref{fig:hexalg}.}
        \label{fig:pinched disk}
    \end{figure}


    If $x\neq v$ and \(\varepsilon\) is in a missing triangle in \(P\), this missing triangle encloses a disk inside $D$, hence we can find an edge to contract in that disc. If, on the other hand \(\varepsilon\) is not part of a missing triangle, then if \(w,x\) do not collectively neighbor five boundary vertices, then we can contract \(\varepsilon\). If they do, then \(x=v\) and we are in the bottom case of the upper part of Figure \ref{fig:hexalg} (both $w$ and $v$ neighbor the boundary vertex $a_2$ in \(\widetilde{P}\)), and we proceed as specified there, and we are done.
\end{proof}

    We can treat the projective plane and the Klein bottle in a fashion similar to the torus.

    For the projective plane, its reduced homology is trivial (over the reals), and therefore, by \ref{Shifting preserves Betti numbers}, the shifting will be a pure 2-dimensional complex (which means that the 2-faces of the shifting completely determine the edges). 
    The following characterization will be used to prove both~\ref{possible shiftings}(ii) and~\ref{alg exists} for the projective plane.
  
    \begin{theorem}
    \label{characterization of shifting of rp2}
        If \(T\) is a prime triangulation of \(\mathbb{RP}^{2}\) with \(n\) vertices, then \(\Delta\left(T\right)_{2}\) is a lexicographic prefix, and in particular:
        \[\mathrm{max}_{\mathrm{lex}}\Delta\left(T\right)_{2}=\begin{cases}
\left\{ 1,4,7\right\}  & n\geq7\\
\left\{ 1,5,6\right\}  & n<7
\end{cases},\]
Where \(\mathrm{max}_{\mathrm{lex}} S\) is the maximal element of \(S\) w.r.t. the lexicographic ordering, for \(\emptyset \neq S\subseteq{\left[n\right] \choose d}\).
    \end{theorem}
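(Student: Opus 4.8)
The plan is to mimic closely the structure of the torus argument (Theorem~\ref{shifting of a large torus} and the proof of Theorem~\ref{possible shiftings}(i)), using that $\mathbb{RP}^2$ has trivial reduced real homology, so that by Theorem~\ref{Shifting preserves Betti numbers} the shifted complex $\Delta(T)$ is pure $2$-dimensional; in particular $\Delta(T)_1$ is completely determined by $\Delta(T)_2$, and it suffices to control $\Delta(T)_2$. First I would reduce to the case where $T$ is critically irreducible: if $T$ contains a reducible critical disk with at most six boundary vertices, contract an admissible internal edge inside it (possible by Proposition~\ref{characterization of crit disks}(iii)); by Proposition~\ref{vert split preserves crit regions} the relevant tails $\mathrm{Tail}_{\mathrm{lex}}(\Delta(T),\{k+1,k+2\})$ are unchanged, and by Proposition~\ref{contraction in triangle preserves 2nd dimension} the tail $\mathrm{Tail}_{\mathrm{lex}}(\Delta(T),\{1,4,5\})$ is preserved under contractions inside triangular disks, so it is enough to prove the statement for $T$ critically irreducible and prime. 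Note that $\mathbb{RP}^2$ has $3n-3$ edges and $2n-2$ triangles by Euler characteristic, so by purity $\Delta(T)_2$ consists of exactly $2n-2$ triples; the claim $\max_{\mathrm{lex}}\Delta(T)_2 = \{1,4,7\}$ for $n\ge 7$ is equivalent to $\Delta(T)_2 = \{\sigma\in\binom{[n]}{3}\,|\,\sigma\le_{\mathrm{lex}}(1,4,7)\}$ (one checks this lex-prefix has the right cardinality, using $\{1,3,n\}\in\Delta(T)$ from Theorem~\ref{volume rigidity of torus kb and rp2}), and similarly $\{1,5,6\}$ for $n<7$.

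Next I would handle the small and base cases by computer, exactly as in the torus proof. Using the list of irreducible triangulations of $\mathbb{RP}^2$ from Barnette~\cite{BarnetteRP2} (there are finitely many, and the smallest is $\mathbb{RP}^2_6$), together with the finitely many triangulations obtained from them by a bounded number of vertex splits, verify: (a) for every irreducible triangulation with $n\ge 7$ vertices, and every reducible prime triangulation on $7$ vertices (if any), equation $\Delta(T)_2 = \{\sigma\,|\,\sigma\le_{\mathrm{lex}}(1,4,7)\}$ holds; (b) for the relevant small triangulations with $n<7$ vertices, $\Delta(T)_2 = \{\sigma\,|\,\sigma\le_{\mathrm{lex}}(1,5,6)\}$; and (c) the analogues of properties (a,b,c) appearing in the proof of Theorem~\ref{shifting of a large torus}: for every critically irreducible triangulation $T'$ of $\mathbb{RP}^2$ on few vertices for which the desired prefix fails, all critical regions of $T'$ are disks with at most six boundary vertices, no boundary vertex of a maximal critical region is internal to another critical region, and this last property is inherited by non-critical splits of $T'$ for which the prefix still fails. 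As in the torus case these computations are run with a random matrix rather than a generic one, which is sufficient because finding a lex-prefix in $\Delta(T')_2$ can only be obstructed by extra linear dependencies, so a positive answer is certified.

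With the base cases in hand, the inductive step runs as in Theorem~\ref{shifting of a large torus}: given a prime critically irreducible $T$ with many vertices, take any sequence of edge contractions to an irreducible triangulation $T_0$; by the verified properties (a,b,c) repeatedly apply Proposition~\ref{commutativity for non crit splits} to rearrange the reverse sequence of vertex splits so that we pass through a triangulation $T'$ with a bounded number of vertices on which the desired lex-prefix for $\Delta(T')_2$ holds, and then by Proposition~\ref{vert split preserves tails} the relevant tails of $\Delta(T)$ are contained in those of $\Delta(T')$, forcing $\mathrm{Tail}_{\mathrm{lex}}(\Delta(T),\{1,5,6\})=\emptyset$; combined with $\{1,3,n\}\in\Delta(T)$, purity, and the face-count $2n-2$, this pins down $\Delta(T)_2$ to be the claimed prefix. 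The main obstacle I anticipate is organizational rather than conceptual: making sure the finite computer-checked list is genuinely exhaustive (all irreducible triangulations, plus enough vertex-split descendants so that every ``bad'' critically irreducible triangulation is covered and properties (a,b,c) hold), and verifying that Proposition~\ref{commutativity for non crit splits} applies at each stage --- i.e. that the critical regions one encounters are all combinatorial and that the no-shared-boundary-vertex hypothesis persists. These are precisely the points where the torus argument already does the delicate bookkeeping, so the $\mathbb{RP}^2$ case should follow the same template with the (simpler) feature that purity removes the need for a separate argument about $\Delta(T)_1$.
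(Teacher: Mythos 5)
Your core mechanism --- computer verification of a finite list of small cases, contraction of a large prime triangulation down to a verified case, the tail inequality of Proposition~\ref{vert split preserves tails}, and then pinning down $\Delta(T)_2$ from $\{1,3,n\}\in\Delta(T)$, shiftedness, purity and the face count $2n-2$ --- is exactly what the paper does: it verifies all cases with $n=6,7$ (feasible because the only irreducible triangulations of $\mathbb{RP}^2$ have $6$ and $7$ vertices, and there is a unique triangulation with fewer than $7$ vertices) and then repeats verbatim the argument used for (\ref{eq: two-faces of shifted torus}) in Theorem~\ref{shifting of a large torus}. Since the $(1,4,7)$ prefix holds for \emph{every} prime $7$-vertex triangulation, any contraction sequence from a prime $T$ that stops at a prime or irreducible triangulation on $7$ vertices is already a verified waypoint, and no rearranging of split sequences is needed.

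The part of your plan that goes beyond this is both unnecessary and not justified as stated. Your reduction to a critically irreducible $T$ invokes Proposition~\ref{vert split preserves crit regions}, which preserves the tails $\mathrm{Tail}_{\mathrm{lex}}(\Delta(\cdot),\{k+1,k+2\})$ of \emph{edges}, and Proposition~\ref{contraction in triangle preserves 2nd dimension}, which controls a $2$-face tail only for contractions inside \emph{triangular} disks; neither gives preservation of the $2$-face tails this theorem is about when you contract inside a quadrilateral, pentagonal or hexagonal critical disk. (Only the one-sided inequality of Proposition~\ref{vert split preserves tails} is available there --- fortunately the direction you need --- but once you use it the whole critically-irreducible reduction becomes superfluous.) Likewise, the analogues of properties (a), (b), (c) and the appeal to Proposition~\ref{commutativity for non crit splits} are additional computational hypotheses that the paper never verifies for $\mathbb{RP}^2$ and that are not obviously true there: for instance, a two-sided loop in $\mathbb{RP}^2$ bounds a M\"obius strip on one side, so critical regions of $\mathbb{RP}^2$ triangulations need not be disks, and your proposed checklist could simply fail while the simpler route still works. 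Finally, be careful about which $7$-vertex waypoints are admissible: a non-prime $7$-vertex triangulation (e.g.\ a stacked $\mathbb{RP}^2_6$) has $(1,5,6)$ in its shifting by Proposition~\ref{contraction in triangle preserves 2nd dimension}, so the base list must consist of all \emph{prime} (including the irreducible) triangulations on $7$ vertices, and the contraction sequence must be arranged to end at such a triangulation --- this, rather than any critical-region bookkeeping, is the only point requiring care.
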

    \begin{proof}
        There are only two irreducible triangulations of \(\mathbb{RP}^{2}\), one with six vertices and one with seven (\cite{BarnetteRP2}). In particular there is only one triangulation of \(\mathbb{RP}^{2}\) with less than seven vertices. Calculation then shows that the theorem holds for all cases with \(n=6,7\). We then get the theorem in all other cases using the exact same argument as in the proof of \ref{shifting of a large torus}(\ref{eq: two-faces of shifted torus}).   
    \end{proof}

    For the Klein bottle, the situation is slightly more complicated. The following characterization will be used to prove both~\ref{possible shiftings}(iii) and~\ref{alg exists} for the Klein bottle.
    \begin{theorem}
    \label{characterization of shifting of klein bottle}
        Let \(K\) be a triangulation of a Klein bottle with \(n\) vertices.
        
        (i) If \(K\) can be written as \((P \cup P')\backslash \sigma\), where \(P,P'\) are triangulations of \(\mathbb{RP}^{2}\) such that \(P \cap P'\) is a 2-face \(\sigma\) (and all its subsets), then: \[\left\{ 1,5,6\right\} \in\Delta\left(K\right)\iff\left\{ 1,5,6\right\} \in\Delta\left(P\right)\cap\Delta\left(P'\right).\]

        (ii) If \(K\) cannot be written as in (i), and if \(K\) has no reducible (combinatorial) critical disk with 3 or 4 boundary vertices, then: \[\left\{ 1,5,6\right\} \in\Delta\left(K\right)\iff n=8.\]

        (iii) \(\left\{ 1,5,7\right\} \notin\Delta\left(K\right)\) (for every triangulation \(K\)).

        (iv) If \(n\geq 13\) and \(K\) has no reducible combinatorial critical regions, then \(\left\{ 5,6\right\} \notin\Delta\left(K\right)\).
    \end{theorem}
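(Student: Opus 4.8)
The plan is to handle the four parts by different routes, leaning on the toolkit already built for the torus. For part (i) I would apply Theorem~\ref{Shifting a union over a simplex} to the pair $P,P'$, which meet exactly along the $2$-simplex $\langle\sigma\rangle$: for a triple $T=\{t_1<t_2<t_3\}$ it gives $T\in\Delta(P\cup P')\iff t_3-t_2\le D_P(T)+D_{P'}(T)-D_{\langle\sigma\rangle}(T)$, where $D_{\langle\sigma\rangle}(T)$ equals $1$ when $t_1=1,\,t_2=2$ and $0$ otherwise, and where $D_P(T),D_{P'}(T)$ are read off from Theorem~\ref{characterization of shifting of rp2}, i.e. from the fact that $\Delta(P)_2,\Delta(P')_2$ are lex-initial segments with maximum $(1,4,7)$ (if at least $7$ vertices) or $(1,5,6)$ (if exactly $6$ vertices). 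Running this for all triples shows that every $2$-face of $\Delta(P\cup P')$ contains the vertex $1$, that $(1,5,6)\in\Delta(P\cup P')$ precisely when $|P_0|=6$ or $|P'_0|=6$, and that $\max_{\mathrm{lex}}\Delta(P\cup P')_2$ is $(1,5,7)$ when $|P_0|=|P'_0|=6$, is $(1,5,6)$ when exactly one of them is $6$, and is $<_{\mathrm{lex}}(1,5,6)$ otherwise. Since $K$ is obtained from $P\cup P'$ by deleting the single $2$-dimensional facet $\sigma$, and $\Delta(K)_2\subseteq\Delta(P\cup P')_2$ with $|\Delta(K)_2|=|\Delta(P\cup P')_2|-1$ and both shifted, $\Delta(K)_2$ is $\Delta(P\cup P')_2$ with its lex-largest $2$-face removed (the standard behaviour of the exterior shift under deletion of a facet, as already used in the proof of Proposition~\ref{contraction in triangle preserves 2nd dimension}; it is checked here by the usual greedy-basis argument). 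Combining, $(1,5,6)\in\Delta(K)$ iff $(1,5,6)$ lies in $\Delta(P\cup P')$ and is not the removed lex-largest face, i.e. iff $|P_0|=|P'_0|=6$, i.e. iff $(1,5,6)\in\Delta(P)\cap\Delta(P')$.

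Part (iii) is the quickest. A triangulation $K$ of the Klein bottle has $b_2(K)=0$ over $\mathbb{R}$, and $\Delta(K)$ is $2$-dimensional, so by Theorem~\ref{Shifting preserves Betti numbers} every $2$-face of $\Delta(K)$ contains the vertex $1$; hence $(1,5,7)\in\Delta(K)\iff|\mathrm{Tail}_{\mathrm{lex}}(\Delta(K),(1,5,6))|\ge 2$, because any $2$-face $\delta\ge_{\mathrm{lex}}(1,5,6)$ with $\delta\ne(1,5,6)$ satisfies $(1,5,7)\le_p\delta$. By Proposition~\ref{vert split preserves tails} the quantity $|\mathrm{Tail}_{\mathrm{lex}}(\Delta(\cdot),(1,5,6))|$ only weakly decreases along a vertex split, so if $K$ contracts to an irreducible triangulation $K_0$ of the Klein bottle then $|\mathrm{Tail}_{\mathrm{lex}}(\Delta(K),(1,5,6))|\le|\mathrm{Tail}_{\mathrm{lex}}(\Delta(K_0),(1,5,6))|$. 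It then suffices to verify by computer, over the finite list of irreducible triangulations of the Klein bottle (\cite{LawrencenkoNegamiKlein, SulankeNoteKlein}), that $(1,5,7)\notin\Delta(K_0)$ (equivalently, that this tail has size at most $1$) for each of them.

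Parts (ii) and (iv) follow the template of Theorem~\ref{shifting of a large torus}. The hypothesis forbidding reducible combinatorial critical disks with $3$ boundary vertices forces $K$ to be prime, since a missing triangle bounding a disk with at least one internal vertex is such a disk by Proposition~\ref{characterization of crit disks}(i). One then carries out the analogous finite computer verification: for (ii), that $(1,5,6)\in\Delta$ holds for all triangulations of the Klein bottle on $8$ vertices and fails for the critically irreducible ones on more vertices that serve as base cases, and that the analogues of properties (a), (b) and (c) of the torus proof hold; for (iv), that $(5,6)\notin\Delta$ holds for all critically irreducible triangulations up to the relevant bound (the analogue of the $n=11$ torus computation, which for the Klein bottle lands at $n=13$). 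For a large $K$ with no reducible critical disks of at most $6$ boundary vertices one contracts to an irreducible triangulation and repeatedly applies Proposition~\ref{commutativity for non crit splits} to reorganize the vertex splits so as to pass through a controlled medium-sized triangulation; here the extra hypothesis that $K$ is not of the form in (i) is exactly what eliminates the connected-sum family $\mathbb{RP}^2\#\mathbb{RP}^2$ (which requires $n\ge 9$ and where $(1,5,6)$ would otherwise survive). Pushing the tail (in)equalities through the splits via Propositions~\ref{vert split preserves tails} and~\ref{vert split preserves crit regions}, and using volume rigidity $(3,n)\in\Delta(K)$ from Theorem~\ref{volume rigidity of torus kb and rp2} to fill in the forced part of $\Delta(K)_1$, yields $(1,5,6)\in\Delta(K)\iff n=8$ in (ii) and $(5,6)\notin\Delta(K)$ whenever $n\ge13$ in (iv).

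The main obstacle I anticipate is the sheer size of the computer verification: the Klein bottle has many more irreducible triangulations than the torus, and one must also run bounded sequences of vertex splits from each of them to reach the base cases of the commutativity argument and to certify the analogues of properties (a), (b) and (c); organizing and trusting this computation (the randomized-matrix point discussed for the torus reappears) is the delicate part, together with making the facet-deletion principle and the full case analysis in part (i) airtight across all the ranges of $|P_0|$ and $|P'_0|$.
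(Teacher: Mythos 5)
Your treatment of parts (ii)--(iv) is essentially the paper's: part (iii) is exactly the argument used there (all $2$-faces of $\Delta(K)$ contain $1$ by Theorem~\ref{Shifting preserves Betti numbers}, computer verification of the irreducible triangulations, then transfer by Proposition~\ref{vert split preserves tails}), and parts (ii) and (iv) are, as in the paper, the torus template of Theorem~\ref{shifting of a large torus} run again with Klein-bottle base computations (the paper's base cases are the irreducible triangulations not of type (i), the critically irreducible $9$-vertex triangulations for (ii), and $n=13$ for (iv), with the appendix extension of Proposition~\ref{characterization of crit disks} to M\"obius strips and pinched disks feeding into (iv)).

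Part (i), however, has a genuine gap. Your key step is the claim that deleting the facet $\sigma$ from $P\cup P'$ removes the \emph{lexicographically largest} $2$-face of $\Delta(P\cup P')$. What is true (by monotonicity of $\Delta$ under passing to a subcomplex on the same vertex set, plus the $f$-vector) is that exactly one $2$-face is removed and that it must be $\leq_p$-maximal in $\Delta(P\cup P')_2$; it need not be the lex-largest one. A small example: for the graph with edges $12,13,23,14$ one has $\Delta(K)_1=\{12,13,14,23\}$, and deleting the facet $14$ removes $14$ from the shifting, not the lex-largest edge $23$ (which must remain to record $b_1=1$). In Proposition~\ref{contraction in triangle preserves 2nd dimension} the removed face was identified not by a general ``lex-largest'' principle but by the drop in $b_2$; here that tool is unavailable, since $b_2(P\cup P')=b_2(K)=0$, every $2$-face of both shiftings contains the vertex $1$, and the change $b_1\colon 0\to 1$ is consistent with removing \emph{any} $\leq_p$-maximal face $\{1,a,b\}$. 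Since a lex-segment ending at $(1,5,6)$ has several $\leq_p$-maximal faces (e.g.\ $(1,4,n)$ as well as $(1,5,6)$), shiftedness alone does not force $(1,5,6)$ to be the face that disappears, so your ``only if'' direction of (i) is not established. (The ``if'' direction survives if you argue via $\leq_p$-maximality rather than lex-maximality, which is exactly what the paper does.) The paper closes the converse differently: using Corollary~\ref{prime triangulation survival}, Theorem~\ref{characterization of shifting of rp2} and Proposition~\ref{vert split preserves tails} it contracts $K$ until $P$ is prime on $7$ vertices and $P'$ prime on $6$ or $7$ vertices, settles the finitely many $(7,6)$ configurations by computer, and handles the $(7,7)$ case by Theorem~\ref{Shifting a union over a simplex}. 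A secondary error: your reformulation ``$(1,5,6)\in\Delta(P)$ iff $|P_0|=6$'' applies Theorem~\ref{characterization of shifting of rp2} outside its hypothesis (it is stated for \emph{prime} triangulations); a non-prime $\mathbb{RP}^2$ on many vertices whose prime reduction is the $6$-vertex triangulation still has $(1,5,6)$ in its shifting by Proposition~\ref{contraction in triangle preserves 2nd dimension}, so the characterization must be stated in terms of $\Delta(P)$ and $\Delta(P')$, as in the theorem, not in terms of vertex counts.
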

    \begin{proof}
        Again, we calculate for small cases and then infer to the larger cases (using the list of irreducible triangulations in \cite{LawrencenkoNegamiKlein} and \cite{SulankeNoteKlein}):

        \underline{(i):} If \(\left\{ 1,5,6\right\} \in\Delta\left(P\right)\cap\Delta\left(P'\right)\), then by \ref{Shifting a union over a simplex}: \[\left\{ 1,5,7\right\} \in\Delta\left(P\cup P'\right).\] Therefore removing one face from \(P\cup P'\) will not remove \(\left\{ 1,5,6\right\}\) from the shifting, as it is not maximal in \(\Delta\left(P\cup P'\right)\) (w.r.t. \(\leq_{p}\)).

        Now assume that \(\left\{ 1,5,6\right\} \notin\Delta\left(P\right)\). Then by \ref{prime triangulation survival} and \ref{characterization of shifting of rp2}, we can contract edges in \(K\) until \(P\) becomes a prime triangulation with seven vertices, and \(P'\) becomes prime with six or seven vertices (we will never contract an edge of the common simplex \(\sigma\), and thus each edge contraction can be seen as occurring in \(P\) or in \(P'\), but not in both). Thus by \ref{vert split preserves tails} the problem reduces to calculating for all the cases where \(P\) is prime with seven vertices and \(P'\) is prime with six vertices (since if \(P'\) also has seven vertices and is prime, then \(\left\{ 1,5,6\right\}\) cannot be in \(\Delta\left(K\right)\) by \ref{Shifting a union over a simplex}, as it is not in \(\Delta\left(P\right)\) nor in \(\Delta\left(P'\right)\) by \ref{characterization of shifting of rp2}).
        
        \underline{(ii):} By calculation, this holds for all irreducible triangulations for which the condition of (i) does not hold; note that all irreducible triangulations of the Klein bottle have at least eight vertices. Moreover, this holds for all critically irreducible triangulations with nine vertices, and the only critical regions in triangulations with eight vertices are triangular and quadrilateral critical disks. Therefore, assertion (ii) follows via the same argument as in the proof of \ref{shifting of a large torus}.
        
        \underline{(iii):} This holds for all irreducible triangulations (by calculation), and thus for any triangulation by \ref{vert split preserves tails}.

        \underline{(iv):} The proof follows the argument used to prove~\ref{shifting of a large torus}(\ref{eq:edges of shifted torus}), applied to the Klein bottle: for every sequence of non-critical splits \(K_{0}\rightarrow K_{1}\rightarrow\cdots\rightarrow K_{r}\) with \(K_{0}\) irreducible and \((K_{r})_{0} = 13\), then \(\left\{ 5,6\right\} \notin\Delta\left(K_{r}\right)\) by calculation, and furthermore if \(0\leq i \leq r\) is minimal such that \(\left\{ 5,6\right\} \notin\Delta\left(K_{i}\right)\), then for all \(0 \leq j < i\), all critical regions in \(K_{j}\) are combinatorial. The argument is then the same as in the proof of~\ref{shifting of a large torus}(\ref{eq:edges of shifted torus}).
        
    \end{proof}

\begin{proof}[Proof of Theorem~\ref{possible shiftings} (ii) and (iii), Sketch.]
We proceed similar to the proof of \ref{possible shiftings}(i), using \ref{characterization of shifting of rp2}, resp. \ref{characterization of shifting of klein bottle}, instead of~\ref{shifting of a large torus}.
\end{proof}

\begin{proof}[Proof of Theorem~\ref{alg exists} for the projective plain and the Klein bottle, Sketch.]
As before, we proceed similar to the proof of \ref{alg exists} for the torus, using \ref{characterization of shifting of rp2}, resp. \ref{characterization of shifting of klein bottle}, instead of~\ref{shifting of a large torus}, for getting a polytime algorithm to compute the exterior shifting.

For \(\mathbb{RP}^{2}\), we only need to contract in triangular critical disks in order to get a prime triangulation, so in fact the algorithm we get is much simpler than the one for the torus, and it will finish in \(O(n^{5})\).



    For the Klein bottle the algorithm is similar but slightly more complicated.
    This time, when searching for critical regions, a loop may separate the triangulation into two Möbius strips instead of enclosing a disk, and we also need to search for pinched disks whose boundary has five vertices. Therefore, when determining which connected component has the topology we are interested in, we need to check orientability in addition to the Euler characteristic (which can also be done in \(O(n)\) time). If at some point we find a loop of length 3 which separates two Möbius strips (so each of them is a projective plane minus a disc), then the problem reduces to calculating the shifting of two projective planes (by \ref{characterization of shifting of klein bottle} (i) and \ref{Shifting a union over a simplex}). Also, if we find a Möbius strip with four boundary vertices, then we need to check that it has exactly one diagonal for it to be a critical region (as opposed to the disk and pinched disk where we want no diagonals). All of these will not contribute anything significant to the complexity, and the algorithm will finish in \(O(n^{8})\) time, like the torus.
\end{proof}   
\end{document}